\numberwithin{equation}{section}
\newtheorem{theorem}{Theorem}[section]
\newtheorem{lemma}[theorem]{Lemma}
\theoremstyle{definition}
\newtheorem{remark}[theorem]{Remark}
\newcommand{\supp}{\operatorname{supp}}
\newcommand {\<}{\left\langle}  
\renewcommand {\>}{\right\rangle}  
\title{The Strassen Invariance Principle for Certain
Non-stationary Markov-Feller Chains}
\newcommand\CoAuthorMark{\footnotemark[\arabic{footnote}]}
\author[  \hspace{-1ex}]{Dawid Czapla\protect\CoAuthorMark
  \thanks{Corresponding author;\; e-mail address: \texttt{dawid.czapla@us.edu.pl}}, Katarzyna Horbacz and Hanna Wojew\'odka-\'Sci\k{a}\.zko}
\affil[  \hspace{-1ex}]{\small\textit{Institute of Mathematics, University of Silesia in Katowice, Bankowa 14, 40-007 Katowice, Poland}} 
\date{}
\begin{document}
\maketitle
\vspace*{-1.2cm}
\begin{abstract}
We propose certain conditions implying the functional law of the iterated logarithm (the Strassen invariance principle) for some general class of non-stationary Markov-Feller chains. This class may be briefly specified by the following two properties: firstly, the transition operator of the chain under consideration enjoys a non-linear Lyapunov-type condition, and secondly, there exists an~appropriate Markovian coupling whose transition probability function can be decomposed into two parts, one of which is contractive and dominant in some sense. Our criterion may serve as a useful tool in verifying the functional law of the iterated logarithm for certain random dynamical systems, developed e.g. in biology and population dynamics. In the final part of the paper we present an example application of our main theorem to a mathematical model describing stochastic dynamics of gene expression.
\end{abstract}
{\small
\noindent
\textbf {MSC 2010:} 60J05, 60J25, 37A30, 37A25 \\
\textbf{Keywords:} Markov chain; random dynamical system; invariant measure; law of the iterated logarithm; asymptotic coupling.
}

\section*{Introduction}

The law of the iterated logarithm (LIL) can be viewed as a~refinement of the strong law of large numbers (SLLN). 
It improves the convergence rate in the SLLN from $\mathcal{O}(t)$ to  $\mathcal{O}(\ln(\ln(t)))$. More specifically, it provides the precise values of the lower and upper limit of almost all sequences formed by the properly scaled partial sums (or integrals) of the sample paths of the stochastic process under study. 
Moreover, the LIL gives an interesting illustration of the difference between almost sure and distributional statements, such as the central limit theorem (CLT).

The functional version of the LIL, now usually called the Strassen invariance principle, was first proven for sums of independent and identically distributed random variables by V.~Strassen  (cf. \cite{strassen}). 
Later, it was extended to square integrable martingales (see e.g.~\cite{hh,hs}) and also to certain particular classes of Markov chains, including stationary processes (cf.~\cite{wu, zw}), as well as non-stationary ones. The latter include, for instance, positive Harris recurrent Markov chains with drift towards petite sets \hbox{(cf.~\cite[Theorem 17.5.3]{mt})} or Markov-Feller chains enjoying the exponential mixing property in the Wasserstein metric (see \cite{bms}). At this point, it is worth stressing that the well-known techniques developed by S.P. Meyn and R.L. Tweedie \cite{mt} are usually only applicable if the state space of the examined Markov chain is locally compact, which is not the case neither here, nor in \cite{bms}.

The main result of this paper is a version of the Strassen invariance principle for a~quite general class of non-stationary Markov-Feller chains evolving on Polish spaces. However, on the contrary to \cite{bms}, 
we use a form of exponential mixing (in the Fortet--Mourier metric; cf. \cite{dawid,clt_chw}), in which a distance between measures $P^n \mu_1$ and $P^n \mu_2$, where $P$ is a transition operator of a Markov chain, does not depend on a~distance between initial distributions $\mu_1$ and $\mu_2$ (similarly as in \cite{hairer}). More precisely, the overall strategy of the proof of our main result (Theorem~\ref{thm:LIL}) is based on the condition of the form
\begin{equation}\label{our_cond}
d_{FM}\left(P^n \mu_1, P^n \mu_2\right) \leq C q^n \left(1+ \int V \,d(\mu_1+\mu_2)\right)\;\;\;\mbox{for}\;\;\; n\in\mathbb{N}\;\;\;\mbox{and some}\;\;\;q<1,
\end{equation}
rather than 
\begin{equation}\label{old_cond}
d_{FM}\left(P^n \mu_1, P^n \mu_2\right) \leq C q^n \,d_{FM}(\mu_1, \mu_2)\;\;\;\mbox{for}\;\;\; n\in\mathbb{N}\;\;\;\mbox{and some}\;\;\;q<1,
\end{equation}
where $\varrho$ and $d_{FM}$ stand for a metric on a state space and the above-mentioned Fortet--Mourier distance, respectively, while $V$ is a Lyapunov function.

It is also worth stressing that we do not assume condition \eqref{our_cond} directly. Instead, we propose a set of conditions, relatively easy to verify, which yield \eqref{our_cond} and the desired assertion.  The motivation to establish such a result derives from our research on certain random dynamical systems, applied mainly in molecular biology (see e.g. the models for gene expression investigated in \cite{hhs, dawid, mtky} or the model for cell cycle discussed in \cite{lm, hw}), to which we have not been able to apply  \hbox{\cite[Theorem 1]{bms}} directly. This is primarily caused by the fact that, upon certain general conditions imposed on the model (which appear to be reasonable in most applications), \eqref{old_cond} seems to be difficult or even impossible to achieve, whilst the same conditions naturally imply \eqref{our_cond}, as shown e.g. in \cite[Theorem 4.1]{dawid}.

The class of Markov-Feller chains for which we state our main result (Theorem~\ref{thm:LIL}), that is, the Strassen invariance principle for the LIL, can be 
characterized briefly by the following two properties. Firstly, the transition operator of the chain under consideration enjoys a non-linear Lyapunov-type condition. Secondly, there exists an appropriate Markovian coupling, whose transition function can be decomposed into two parts, one of which is contractive and dominant in some sense. The construction of such a~coupling is described in details e.g. in \cite{hairer, hw, ks, sleczka}. Some proof techniques, employed in this paper, are adapted from the articles \cite{bms} and \cite{hhsw2}, which both pertain to the martingale results by C.C.~Heyde and D.J. Scott \cite{hs}. One of the simplest classes of Markov chains achieving the desired properties are those arised from random iterated function systems with an arbitrary number of transformations, which are assumed to be contractive on average, such as those considered in \cite{ks,sleczka, werner, horbacz_szarek}. 

Our main result is formulated in the same spirit as \cite[Theorem 2.1]{ks} and \hbox{\cite[Theorem 2.1]{clt_chw}}, whose applicability was illustrated by proving the exponential ergodicity (in the Fortet--Mourier distance) and the CLT, respectively, for some, important from the application point of view, random dynamical system (cf. \cite{dawid, asia}). Here we use our generel result to establish the functional LIL for such a model (cf. Theorem~\ref{thm_LIL}). 

The aforementioned dynamical system has interesting biological interpretations. First of all, it can be viewed as the Markov chain given by the post-jump locations of some piecewise-deterministic Markov process, discussed in Section \ref{sec:ex}, which
occurs in a~simple model of gene expression (cf.~\cite{dawid, mtky}). Incidentally, this process can be also identified with the solution of a Poisson driven stochastic differential equation (in the context cosidered in \cite{horbacz_poiss, kazak, asia}), mainly developed by A. Lasota and J. Traple \cite{las_traple}. 
On the other hand, a special case of the above-mentioned abstract random dynamical system, defined as an iterated function system with an additive perturbation (see \cite{hhsw2}), provides a mathematical framework for modelling the concentration of the compunds involved in the gene autoregulation at times of transcriptional bursts (for details, see \cite{hhs}). The latter example indicates the importance of considering a non-locally compact space as the state space in the abstract framework. 
Furthermore, it is also worth mentioning that in the case where no disturbance is present, we obtain an ordinary random iterated function system (with an arbitrary set of transformations), which applies e.g. in a model of cell cycle (cf. e.g. \cite{lm,hw}). 

Finally, let us point out, that, beside the examples captured by the model discussed in Section \ref{sec:ex}, there exist other dynamical systems, such as e.g. the one considered in [16],  which fits the abstract framework of Theorem \ref{thm:LIL}, but cannot be obtained as the special case of the aforementioned model.

The article is organised as follows. In Section \ref{sec:1}, we gather notation and definitions used throughout the paper. We mainly relate to the general theory of Markov chains, discussed more widely
e.g. in \hbox{\cite{mt, revuz}}, and, in particular, we introduce the concept of a Markovian coupling. In Section \ref{sec:SGP}, we quote some auxiliary results, established in \cite{ks,clt_chw}, while in Section~\ref{sec:LIL}, we formulate and prove our main result. 
At the beginning of this section, we also present a few general observations concerning martingales, whose proofs are carried out in \hyperref[append]{Appendix}. 
Finally, in Section \ref{sec:ex}, we apply our main result to the above-mentioned particular dynamical system (considered e.g. in \cite{dawid}), related to~a model of gene expression.

\section{Prelimenaries}\label{sec:1}

In the beginning, we shall introduce some notation and recall certain general definitions, as well as basic facts that will be used in our further analysis.

Let us write $\mathbb{R}_+=[0,\infty)$ and $\mathbb{N}_0=\mathbb{N}\cup\{0\}$ with $\mathbb{N}$ standing for the set of all positive integers. For any point $x$ and any set $A$, the symbols $\delta_x$ and $\mathbbm{1}_A$ will denote the Dirac measure at $x$ and the indicator function of $A$, respectively.

We consider a complete separable metric space $(X,\varrho)$, endowed with the \hbox{$\sigma$-field} $\mathcal{B}_X$ of its Borel subsets. By $B_b(X)$ we will denote the space of all bounded Borel measurable functions $f:X\to\mathbb{R}$, equipped with the supremum norm $\|f\|_{\infty}=\sup_{x\in X}|f(x)|$, while $C_b(X)$ and $Lip_b(X)$ will stand for the subspaces of $B_b(X)$ consisting of all continuous and all Lipschitz continuous functions, repectively. In the present paper we shall also refer to  the space $\bar{B}_b(X)$ consisting of functions $f:X\to\mathbb{R}$ which are Borel measurable and bounded below.

In what follows, we will write $\mathcal{M}_{fin}(X)$ and $\mathcal{M}_{1}(X)$ for the sets of all finite and all probability Borel measures on $X$, respectively. We shall also introduce  
\begin{equation*}
\mathcal{M}_{1,r}^V(X)=\left\{\mu\in \mathcal{M}_1(X):\; \int_XV^r(x)\,\mu(dx)<\infty\right\}\;\;\;\mbox{for any}\;\;\;r>0
\end{equation*}
and any given Lyapunov function $V:X\to[0,\infty)$, that is, a continuous function which is  bounded on bounded sets, and, in the case of unbounded $X$, satisfies \hbox{$\lim_{\varrho(x,\bar{x})\to\infty}V(x)=\infty$} for some $\bar{x}\in X$. 
For brevity, for any $f\in \bar{B}_b(X)$ and any signed Borel measure $\mu$ on $X$, we will write $\langle f,\mu\rangle$ for $\int_Xf(x)\,\mu(dx)$. As usual, $\mbox{supp}\,\mu$ will denote the support of $\mu\in\mathcal{M}_{fin}(X)$. 

To evaluate the distance between probability measures, we will use the so-called Fortet--Mourier distance (see e.g. \cite{l_frac}), defined as follows:
\begin{equation*}
d_{FM}(\mu_1,\mu_2)=\sup\left\{\left|\left\langle f,\mu_1-\mu_2\right\rangle\right|:\;f\in Lip_{FM}(X)\right\}\quad\mbox{for}\quad \mu_1,\mu_2\in \mathcal{M}_1(X),
\end{equation*}
where  
\begin{equation*}
Lip_{FM}(X)=\{f\in C_b(X):\;\|f\|_{BL}\leq 1\}, \;\;\;\|f\|_{BL}=\max\{|f|_{Lip},\,\|f\|_{\infty}\}
\end{equation*}
and $|f|_{Lip}$ stands for the minimal Lipschitz constant of $f$.  It is well-known that, whenever $(X,\varrho)$ is a~complete separable metric space, which is the case here, the convergence in $d_{FM}$ is equivalent to the weak convergence of probability measures. Moreover, upon this assumption, the space $(\mathcal{M}_1(X),d_{FM})$ is complete (see \cite{dudley} for the proofs of both these facts).

A mapping $\Pi:X\times \mathcal{B}_X\to [0,1]$ is called a (sub)stochastic kernel if \hbox{$\Pi(\cdot,A):X\to[0,1]$} is a Borel measurable function for any fixed $A\in \mathcal{B}_X$, and \hbox{$\Pi(x,\cdot):\mathcal{B}_X\to[0,1]$} is a~(sub)probability Borel measure for any fixed $x\in X$. Given a (sub)stochastic kernel $\Pi$, we can also define the $n$-th step kernels $\Pi^n$, $n\in\mathbb{N}_0$, by setting, for every $x\in X$ and any $A\in\mathcal{B}_X$,
\begin{equation*}
\Pi^0(x,A)=\delta_x(A)\;\;\;\text{and}\;\;\; 
\Pi^n(x,A)=\int_X\Pi(y,A)\,\Pi^{n-1}(x,dy)
\;\;\; \mbox{for}\;\;\;
n\geq 1.
\end{equation*}

Every stochastic kernel $\Pi$ naturally induces a Markov operator \hbox{$P:\mathcal{M}_{fin}(X)\to \mathcal{M}_{fin}(X)$} and its dual operator \hbox{$U:{B}_b(X)\to {B}_b(X)$}, which are given by the formulas:
\begin{align}
\label{def:markov_op}
&P\mu(A)=\int_X\Pi(x,A)\,\mu(dx)\;\;\mbox {for} \;\;\mu\in \mathcal{M}_{fin}(X),\; A\in \mathcal{B}_X,\\
\label{def:dual_op}
&Uf(x)=\int_Xf(y)\,\Pi(x,dy) \;\;\mbox{for}\;\; f\in {B}_b(X),\;x\in X.
\end{align}
By the duality of operators $P$ and $U$ we mean the following relationship:
\begin{equation}\label{eq:duality}
\langle f,P\mu \rangle=\langle Uf,\mu\rangle \quad\mbox{for}\quad f\in{B}_b(X),\; \mu\in \mathcal{M}_{fin}(X).
\end{equation}
Let us note that $U$, given by \eqref{def:dual_op}, can be extended in the usual way to the space $\bar{B}_b(X)$ so that \eqref{eq:duality} holds for all functions $f$ from this space.

Let $P$ be an arbitrary Markov operator defined as in \eqref{def:markov_op}. If $P\mu_*=\mu_*$ for some $\mu_*\in \mathcal{M}_{fin}(X)$, then $\mu_*$ is called an~invariant measure of $P$. The operator $P$ is said to be exponentially ergodic in~$d_{FM}$ (on the set $\mathcal{M}_{1,1}^V(X)$) whenever it has a~unique invariant measure $\mu_*\in \mathcal{M}_{1,1}^V(X)$ and there exists $q\in (0,1)$ such~that
\begin{equation*}
d_{FM}(P^n\mu,\mu_*)\leq q^n c(\mu)\;\;\;\mbox{for any}\;\;\;\mu\in\mathcal{M}_{1,1}^V(X),\;n\in\mathbb{N},
\end{equation*}
where $c(\mu)$ is a constant depending on $\mu$.

Suppose now that $(\phi_n)_{n\in\mathbb{N}_0}$ is an $X$-valued time-homogeneous Markov chain defined on a~probability space $(\Omega, \mathcal{A},\mathbb{P})$. Then the formula
\begin{equation} \label{trans_n} 
\Pi(x,A):=\mathbb{P}(\phi_{n+1}\in A|\phi_n=x)\;\;\;\mbox{for}\;\;\; x\in X,\;A\in \mathcal{B}_X,\;n\in\mathbb{N}_0
\end{equation}
defines a stochastic kernel, which determines the so-called one-step transition law of the chain $(\phi_n)_{n\in\mathbb{N}_0}$. The evolution of the distributions \hbox{$\mu_n(\cdot):=\mathbb{P}(\phi_n\in \cdot)$} can be then described by the Markov \hbox{operator} $P$ induced by $\Pi$ (called a transition operator in this context), i.e. $\mu_{n+1} =P\mu_n$ for any $n\in\mathbb{N}_0$.

On the other hand, for any given stochastic kernel $\Pi$ and any probability measure $\mu\in\mathcal{M}_1(X)$, we can always define a time-homogeneous Markov chain $(\phi_n)_{n\in \mathbb{N}_0}$ with transition law $\Pi$ and initial measure $\mu$ as a coordinate process on the space $\Omega:=X^{\mathbb{N}_0}$ endowed with the product topology. More specifically $(\phi_n)_{n\in\mathbb{N}_0}$ is then a sequence of projections from $\Omega$ to $X$, given by $\phi_n(\omega)=x_n$ for \hbox{$\omega=(x_0,x_1,\ldots)\in \Omega$}. In this case, according to \hbox{\cite[Theorem 3.4.1]{mt}}, there exists a~probability measure $\mathbb{P}_{\mu}\in\mathcal{M}_1(\Omega)$ 
such that, for any $n\in\mathbb{N}_0$ and any $A_0,\ldots,A_n\in\mathcal{B}_X$, we have
\begin{equation}\label{eq:P_mu}
\mathbb{P}_{\mu}(A_0\times\ldots\times A_n\times X \times\ldots)=\int_{A_0}\int_{A_1}\ldots \int_{A_{n-1}}P(x_{n-1},A_n)P(x_{n-2},dx_{n-1})P(x_0,dx_1)\mu(dx_0).\;\;
\end{equation}
It can be shown that $(\phi_n)_{n\in \mathbb{N}_0}$ is then a~time-homogeneous Markov chain on the probability space $(\Omega,\mathcal{B}_{\Omega},\mathbb{P}_{\mu})$ with transition law $\Pi$ and initial distribution $\mu$. Clearly, 
$\mathbb{P}_{\mu}(B)$ is then the probability of the event $\left\{(\phi_n)_{n\in\mathbb{N}_0}\in B\right\}$ for $B\in\mathcal{B}_{\Omega}$. 
The Markov chain defined according to the above scheme will be further called a canonical Markov chain. 
The expectation operator corresponding to $\mathbb{P}_{\mu}$ will be denoted, as usual, by $\mathbb{E}_{\mu}$. Moreover, by convention, for any $x\in X$, we will write $\mathbb{P}_{x}$ and $\mathbb{E}_x$ rather than $\mathbb{P}_{\delta_x}$ and  $\mathbb{E}_{\delta_x}$, respectively. Obviously, one can easily check that 
\begin{equation*}
\mathbb{P}_x(B)=\mathbb{P}_{\mu}(B \, | \, \phi_0=x)\quad\text{for any}\quad B\in\mathcal{B}_{\Omega}\quad\text{and}\quad\mu\in\mathcal{M}_1(X).
\end{equation*}

A~time-homogeneous Markov chain $(\phi^{(1)}_n,\phi^{(2)}_n)_{n\in\mathbb{N}_0}$ evolving on $X^2$ (endowed with the product topology) is said to be a Markovian coupling of some stochastic kernel $\Pi$ whenever its transition law $C:X^2\times  \mathcal{B}_{X^2}\to\left[0,1\right]$ satisfies
\begin{equation*}
C(x,y,A\times X)= \Pi(x,A)\;\;\;\text{and}\;\;\;
C(x,y,X\times A)= \Pi(y,A)\;\;\;\mbox{for any}\;\;\; x,y\in X,\;A\in \mathcal{B}_X.
\end{equation*}
Conventionally, the kernel $C$ itself is often called a coupling of $\Pi$, too.

In practise, given a measure $\alpha\in\mathcal{M}_1(X^2)$, it is convenient to consider the canonical form of the coupling $(\phi_n^{(1)}, \phi_n^{(2)})_{n\in\mathbb{N}_0}$, defined on the coordinate space $((X^2)^{\mathbb{N}_0},\mathcal{B}_{(X^2)^{\mathbb{N}_0}})$ endowed with an appropriate probability measure $\mathbb{C}_{\alpha}$, which makes $\alpha$ the initial distribution of this chain and obeys the rule corresponding to \eqref{eq:P_mu} with $C$ and $\mathbb{C}$ in the roles of $P$ and $\mathbb{P}$, respectively. 
In accordance with the convention adopted above, we will use the symbol $\mathbb{C}_{x,y}$ instead of $\mathbb{C}_{\alpha}$ in the case where $\alpha=\delta_{(x,y)}$ for some $(x,y)\in X^2$. The expected values corresponding to $\mathbb{C}_{\alpha}$ and $\mathbb{C}_{x,y}$ will be denoted by~$\mathbb{E}_{\alpha}$ and $\mathbb{E}_{x,y}$, respectively.

Let us also indicate that, for any stochastic kernel $\Pi:X\times\mathcal{B}
_X\to[0,1]$ and any substochastic kernel \hbox{$Q:X^2\times\mathcal{B}_{X^2}\to [0,1]$} satisfying 
\begin{equation}\label{def:substoch}
Q(x,y,B\times X)\leq \Pi(x,B)\;\;\;\text{and}\;\;\;
Q(x,y,X\times B)\leq\Pi(y,B)
\;\;\;\mbox{for}\;\;\; x,y\in X,\;B\in\mathcal{B}_X,
\end{equation}
there exists a substochastic kernel $R:X^2\times \mathcal{B}_{X^2}\to[0,1]$ such that $C=Q+R$ is a~Markovian coupling of $\Pi$ (see e.g. \cite{dawid,ks,hw} for the explicit formula of $R$).

\section{Conditions Sufficient for Exponential Ergodicity}\label{sec:SGP}

Consider a stochastic kernel $\Pi:X\times\mathcal{B}_X\to[0,1]$, and let $P$, $U$ be the operators given by \eqref{def:markov_op}, \eqref{def:dual_op}, respectively. Below, and throughout the rest of this paper, we will impose the following assumptions:
\begin{itemize}
\item[(B0)] \phantomsection \label{cnd:B0} The Markov operator $P$ has the Feller property.
\item[(B1)] \phantomsection \label{cnd:B1} There exist a Lyapunov function $V:X\to[0,\infty)$ and constants $a\in (0,1)$, $b\in(0,\infty)$ such that
\begin{equation*}\langle V, P\mu\rangle\leq a \langle V,\mu\rangle +b \quad\mbox{for every}\quad \mu\in \mathcal{M}_{1,1}^V(X).\end{equation*}  
\end{itemize}
Furthermore, we will also require the existance of a~substochastic kernel \hbox{$Q:X^2\times\mathcal{B}_{X^2}\to[0,1]$} which satisfies  (\ref{def:substoch}) and, for some $F\subset X^2$, enjoys the following conditions:
\begin{itemize}
\item[(B2)] \phantomsection \label{cnd:B5} 
There exists 
a Markovian coupling $(\phi^{(1)}_n,\phi^{(2)}_n)_{n\in \mathbb{N}_0}$ of $\Pi$ with transition law $C\geq Q$ such that, for some $\Gamma>0$,     
we can choose $\gamma\in(0,1)$ and $c_{\gamma}>0$ for which
\begin{equation*}\mathbb{E}_{x,y}(\gamma^{-\rho})\leq c_{\gamma},\quad\mbox{whenever}\quad V(x)+V(y)<4b(1-a)^{-1},\end{equation*}
where
\begin{equation}\label{def:tau}
\rho
:=\inf\left\{n\in\mathbb{N}_0:\;(\phi^{(1)}_n,\phi^{(2)}_n)\in F\;\mbox{and}\;V\left(\phi^{(1)}_n\right)+V\left(\phi^{(2)}_n\right)<\Gamma\right\}.
\end{equation}
\item[(B3)] \phantomsection \label{cnd:B2} There exists  $\delta\in(0,1)$ such that 
\begin{equation*}
\mbox{supp}\,Q(x,y,\cdot)\subset F\;\;\;\mbox{and}\;\;\;
\int_{X^2}\varrho(u,v)\,Q(x,y,du\times dv)\leq \delta\, \varrho(x,y)\quad\mbox{for any}\quad(x,y)\in F.
\end{equation*}
\item[(B4)] \phantomsection \label{cnd:B3} Letting $U(r)=\{(u,v)\in F:\varrho(u,v)\leq r\}$ for any $r>0$, we have 
\begin{equation*}
\inf_{(x,y)\in F}Q\big(x,y,U\left(\delta\,\varrho(x,y)\right)\big)>0.
\end{equation*}
\item[(B5)] \phantomsection \label{cnd:B4}  There exist constants $\beta\in (0,1]$ and $c_{\beta}>0$ such that
\begin{equation*}
Q\left(x,y,X^2\right)\geq 1-c_{\beta}\,\varrho^{\beta}(x,y)\quad\mbox{for every}\quad (x,y)\in F.
\end{equation*}
\end{itemize}

Below, we quote two results that we extensively use in the present paper. They are proven in \cite{ks} and \cite{clt_chw}, respectively.

\begin{theorem}[\mbox{\cite[Theorem 2.1]{ks}}]\label{thm:spectral_gap}
Suppose that conditions \hyperref[cnd:B0]{(B0)}-\hyperref[cnd:B4]{(B5)} 
hold for a Markov operator $P$, some substochastic kernel $Q:X^2\times \mathcal{B}_{X^2}\to [0,1]$, satisfying \eqref{def:substoch}, and some $F\subset X^2$. 
Then, $P$ possesses a unique invariant measure $\mu_*\in \mathcal{M}_{1}(X)$ such that $\mu_*\in\mathcal{M}_{1,1}^V(X)$, where $V$ is the Lyapunov function  determined by~\hyperref[cnd:B1]{(B1)}. Moreover, there exist constants $q\in(0,1)$ and $c<\infty$ such that
\begin{equation}
\label{e:exp_erg}
d_{FM}\left(P^n\mu,\mu_*\right)\leq cq^n \left(1+\langle V,\mu\rangle+\langle V,\mu_*\rangle\right)\;\; \mbox{for any}\;\;\mu\in \mathcal{M}_{1,1}^V(X),\;n\in\mathbb{N}_0.
\end{equation}
\end{theorem}

\begin{lemma}[\mbox{\cite[Lemma 2.3]{clt_chw}}]\label{cor:g_useful}
Under the assumptions of Theorem \ref{thm:spectral_gap},  
there exist $q\in(0,1)$ and $c<\infty$ such that
\begin{equation}
\label{eq:lemma_thesis}
\mathbb{E}_{x,y}\left|g\left(\phi_n^{(1)}\right)-g\left(\phi_n^{(2)}\right)\right|\leq c\|g\|_{BL}\,q^n(1+V(x)+V(y))
\end{equation}
for all $(x,y)\in X^2$, $g\in Lip_b(X)$ and $n\in\mathbb{N}_0$, where the coupling $(\phi_n^{(1)}, \phi_n^{(2)})_{n\in\mathbb{N}_0}$ fulfills \hyperref[cnd:B5]{(B2)}.
\end{lemma}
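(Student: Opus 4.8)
The plan is to reduce the bounded-Lipschitz quantity to a pure coupling-distance estimate and then establish the geometric contraction of that distance from conditions \hyperref[cnd:B1]{(B1)}--\hyperref[cnd:B5]{(B5)}. Throughout write $\varrho_n=\varrho(\phi_n^{(1)},\phi_n^{(2)})$ for the coupling determined by \hyperref[cnd:B5]{(B5)}. For the reduction, note that for any $g\in Lip_b(X)$ and any $u,v\in X$ one has $|g(u)-g(v)|\le\min\{2\|g\|_\infty,\,|g|_{Lip}\varrho(u,v)\}\le 2\|g\|_{BL}\min\{1,\varrho(u,v)\}$. Consequently it suffices to produce constants $q\in(0,1)$ and $c>0$, independent of $g$, for which $\mathbb{E}_{x,y}\min\{1,\varrho_n\}\le c\,q^n(1+V(x)+V(y))$; the lemma then follows at once upon multiplying by $2\|g\|_{BL}$.

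\emph{Drift and return times.} Applying the dual operator to each marginal, \hyperref[cnd:B1]{(B1)} gives $\mathbb{E}_{x,y}[V(\phi_{n+1}^{(1)})+V(\phi_{n+1}^{(2)})\mid\mathcal{F}_n]\le a\,(V(\phi_n^{(1)})+V(\phi_n^{(2)}))+2b$, so that $W:=V\oplus V$ satisfies a geometric Foster--Lyapunov drift toward the sublevel set $\{W<4b(1-a)^{-1}\}$. On this set \hyperref[cnd:B5]{(B5)} supplies the exponential-moment control $\mathbb{E}_{x,y}\gamma^{-\rho}\le c_\gamma$ of the first entrance time $\rho$ into the contraction region $F\cap\{W<\Gamma\}$, which bounds both the length and the probability of long excursions. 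I would first use the drift to reach the small-$W$ region and then invoke (B5), so that the argument applies to every starting pair $(x,y)$ and the Lyapunov values $V(x),V(y)$ enter only through this warm-up phase.

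\emph{Contraction inside $F$.} While the coupled chain sits in $F$, decompose its transition as $C=Q+R$. By \hyperref[cnd:B2]{(B2)} the $Q$-part keeps the chain in $F$ and contracts the distance, $\int_{X^2}\varrho(u,v)\,Q(x,y,du\times dv)\le\delta\varrho(x,y)$; by \hyperref[cnd:B3]{(B3)} a uniformly positive fraction of the $Q$-mass lands within distance $\delta\varrho$; and by \hyperref[cnd:B4]{(B4)} the defect $R(x,y,X^2)\le c_\beta\varrho^\beta(x,y)$---the probability that the coupling fails to follow the contractive part---is small precisely when $\varrho$ is small. Together these yield a one-step inequality of the form $\mathbb{E}[\psi(\varrho_{n+1})\mid\mathcal{F}_n]\le\kappa\,\psi(\varrho_n)+(\text{defect correction})$ valid on $F$, with $\kappa<1$, for a suitable concave test function such as $\psi(t)=t^\beta\wedge1$.

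\emph{Assembling the estimate, and the main obstacle.} I would then splice the excursions together: between successive visits to $F\cap\{W<\Gamma\}$ the drift and return-time bounds of (B1),(B5) control both the elapsed time and the growth of $\varrho_n\wedge1$ (itself bounded by $1$), while inside $F$ the contraction of the previous step applies, and a supermartingale/inductive argument over these excursions produces the geometric rate $q$ and the constant $c$, the value of $q$ being dictated by the interplay of $a$, $\delta$, $\gamma$ and $\beta$. The cleanest route is in fact to observe that the proof of Theorem \ref{thm:spectral_gap} in \cite{ks} already establishes exactly the displayed bound on $\mathbb{E}_{x,y}\min\{1,\varrho_n\}$ at the level of the coupling, the $d_{FM}$ statement being obtained from it by integration against test functions in $Lip_{FM}(X)$; the lemma is then immediate via the reduction above. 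The main difficulty throughout is the simultaneous control of the failure events, where the coupling abandons the contractive regime (handled by the small defect $c_\beta\varrho^\beta$ of (B4)), and of the excursions away from $F$ (handled by the drift (B1) and the return-time moment (B5)), together with the balancing of all rate constants into a single $q\in(0,1)$.
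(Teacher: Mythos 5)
The paper itself contains no proof of this lemma: it is imported verbatim from \cite[Lemma 2.3]{clt_chw}, and the argument there is exactly your ``cleanest route''. The pointwise bound $|g(u)-g(v)|\le 2\|g\|_{BL}\left(1\wedge\varrho(u,v)\right)$ reduces the claim to the estimate $\mathbb{E}_{x,y}\left(1\wedge\varrho_n\right)\le c\,q^n(1+V(x)+V(y))$, and this is precisely the intermediate coupling-level estimate established in the proof of \cite[Theorem 2.1]{ks} (Theorem \ref{thm:spectral_gap} here), from which the $d_{FM}$ statement is afterwards obtained by taking suprema over $Lip_{FM}(X)$. So your proposal is correct and follows essentially the same path as the cited source; note that this reduction is genuinely needed, since the $d_{FM}$ conclusion of Theorem \ref{thm:spectral_gap} alone only controls $|\mathbb{E}_{x,y}[g(\phi_n^{(1)})-g(\phi_n^{(2)})]|$, with the absolute value outside the expectation, and you correctly avoid that trap.

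One caution about your self-contained sketch of the contraction step, in case you intended it as a genuine alternative: the one-step inequality $\mathbb{E}[\psi(\varrho_{n+1})\mid\mathcal{F}_n]\le\kappa\,\psi(\varrho_n)+(\text{defect})$ with $\kappa<1$ and $\psi(t)=t^{\beta}\wedge 1$ does not follow from (B2)--(B4). The $Q$-part contributes at best $\delta^{\beta}\psi(\varrho_n)$, but the defect $R(x,y,X^2)\le c_{\beta}\varrho^{\beta}(x,y)$ is itself of order $\psi(\varrho_n)$, and nothing in the hypotheses forces $\delta^{\beta}+c_{\beta}<1$; moreover, after an $R$-step the pair may leave $F$ altogether, so the inequality could not be iterated anyway. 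The argument of \cite{hairer,ks} instead works on the event that the coupled chain follows the $Q$-subkernel at every step: on that event the distance contracts deterministically, $\varrho_n\le\delta^n\varrho_0$, so the total probability of ever taking an $R$-step is at most $c_{\beta}\varrho_0^{\beta}\sum_{n}\delta^{n\beta}$, which is small when $\varrho_0$ is small; condition (B3) together with the exponential moment of $\rho$ in (B5) then makes such favourable configurations recur geometrically fast. Since your final route defers to exactly these estimates rather than to the sketched inequality, the proposal stands, but the middle step as written would not survive on its own.
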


The key idea underlying both Theorem \ref{thm:spectral_gap} and Lemma \ref{cor:g_useful}, in which conditions \hyperref[cnd:B0]{(B0)}-\hyperref[cnd:B4]{(B5)} are assumed, pertains to the so-called asymptotic coupling technique, introduced by M. Hairer in \cite{hairer}. Roughly speaking, conditions \hyperref[cnd:B1]{(B1)}-\hyperref[cnd:B4]{(B5)}  provide the existence of a Markovian coupling $(\phi_n^{(1)},\phi_n^{(2)})_{n\in\mathbb{N}_0}$ of~$\Pi$, whose transition function, say $C$, can be decomposed into two substochastic kernels, one of which, denoted by $Q$, enjoys the contractivity property, expressed by \hyperref[cnd:B2]{(B3)}, and plays a dominant role in the evolution of the coupling. The most technical condition \hyperref[cnd:B5]{(B2)} ensures that the dynamics under consideration quickly enters the set $F$, that is, the domain of contractivity of $Q$. By the dominance of $Q$ we mean the existence of an a.s. finite random time, say $\tau$, after which any further step of the coupled chain is drawn only according to $Q$.  Establishing this property involves the use of all hypotheses \hyperref[cnd:B1]{(B1)}-\hyperref[cnd:B4]{(B5)}. The dominant, contractive part $Q$ makes the copies of the Markov chain (governed by $P$) couple asymptotically at an exponential rate.

To better illustrate the main idea behind conditions \hyperref[cnd:B1]{(B1)}-\hyperref[cnd:B4]{(B5)}, let us sketch very briefly the proof of Lemma \ref{cor:g_useful}. The crucial point here is to consider an augmented coupling $(\hat{\phi}_n)_{n\in\mathbb{N}_0}$ of the form \hbox{$\hat{\phi}_n:=(\phi_n^{(1)}, \phi_n^{(2)}, \theta_n)$} with values in $X^2\times\{0,1\}$,  constructed in such a way that
\begin{align*}
&\mathbb{C}_{x,y}\left(\hat{\phi}_n\in A\times \{1\}\right)=Q^n(x,y, A),\;\;\;
\mathbb{C}_{x,y}\left(\hat{\phi}_n\in A\times \{0\}\right)=R^n(x,y, A),\\
&\mathbb{C}_{x,y}\left(\left(\phi_n^{(1)},\phi_n^{(2)}\right)\in A\right)=C^n(x,y,A)\;\;\;\text{for any}\;\;\;(x,y)\in X^2\;\;\;\text{and any}\;\;\;A\in\mathcal{B}_{X^2}.
\end{align*}
Then, the aforementioned random variable $\tau$ can be defined as an absorption time of the form
\begin{equation*}
\tau:=\inf\left\{n\in\mathbb{N}:\,\theta_m=1\;\;\;\text{for any}\;\;\; m\geq n\right\}.
\end{equation*}
Further, we get
\begin{align}
\label{s1}
\begin{split}
\mathbb{E}_{x,y}\left|f\left(\phi_n^{(1)}\right)-f\left(\phi_n^{(2)}\right)\right|
&\leq \|g\|_{BL}\int_{X^2} \varrho(u,v)\,\mathbb{C}_{x,y}\left(\rho^{(N)} \leq M,\,\tau\leq N,\,\left(\phi_n^{(1)},\phi_n^{(2)}\right)\in du\times dv\right)\\
&\quad+2\|g\|_{BL}\mathbb{C}_{x,y}\left(\rho^{(N)}>M\right)+2\|g\|_{BL}\mathbb{C}_{x,y}(\tau>N)
\end{split}
\end{align}
for any $g\in Lip_{b}(X)$, $x,y\in X$ and integers satisfying \hbox{$n>M>N>0$}, 
where
\begin{equation*}
\rho^{(N)}:=\inf\left\{n\geq N:\,\left(\phi_n^{(1)},\phi_n^{(2)}\right)\in F\;\;\text{and}\;\;V\left(\phi_n^{(1)}\right)+V\left(\phi_n^{(2)}\right)<\Gamma \right\}.
\end{equation*}
Then, conditon \hyperref[cnd:B2]{(B3)} allows one to estimate the first component on the right-hand side of \eqref{s1} by $\|g\|_{BL}\Gamma \delta^{n-M}$ with some $\delta\in (0,1)$. Hypotheses \hyperref[cnd:B1]{(B1)} and \hyperref[cnd:B5]{(B2)} are applied to show that there exists $q_{\rho}\in (0,1)$ and $C_{\rho}\in\mathbb{R}$ such that
\begin{equation}
\label{s2}
\mathbb{E}_{x,y}\left(q_{\rho}^{-\rho^{(N)}}\right)\leq C_{\rho}q_{\rho}^{-N}(1+V(x)+V(y))\quad\text{for any}\quad (x,y)\in X^2,
\end{equation} 
which, in particular, yields that $\mathbb{C}_{x,y}(\rho^{(N)}> M)\leq C_{\rho} q_{\rho}^{M-pN}(1+V(x)+V(y))$ for some $p\geq 1$. Further,  condition \hyperref[cnd:B3]{(B4)}, together with \hyperref[cnd:B2]{(B3)} and \hyperref[cnd:B4]{(B5)}, enables one to conclude that there exist $\epsilon>0$, $q_{\varkappa}\in (0,1)$ and $C_{\varkappa}<\infty$ such that
\begin{equation}
\label{s3}
\mathbb{C}_{x,y}(\tau=1)=\lim_{n\to\infty} Q^n(x,y,X^2)\geq \epsilon\quad\text{and}\;\;\; \mathbb{E}_{x,y}[\mathbbm{1}_{\{\varkappa<\infty\}}q_{\varkappa}^{-\varkappa}]\leq C_{\varkappa}\;\;\; \text{for any}\;\;\; (x,y)\in F,
\end{equation}
where $\varkappa:=\inf\{n\in\mathbb{N}:\,\theta_n=0\}$. Having established \eqref{s2} and \eqref{s3}, one can show (as in \hbox{\cite[Lemma 2.2]{ks}}) that there exist $q_\tau\in (0,1)$ and $C_{\tau}>0$ for which
\begin{equation*}
\widehat{\mathbb{E}}_{x,y}(q_{\tau}^{-\tau})\leq C_{\tau}(1+V(x)+V(y))\;\;\;\text{for any}\;\;\; (x,y)\in X^2,
\end{equation*}
whence $\mathbb{C}_{x,y}(\tau>N)\leq C_{\tau}q_{\tau}^N(1+V(x)+V(y))$. The assertion of Lemma \ref{cor:g_useful} then follows from \eqref{s1}.

A simple example of a model for which conditions \hyperref[cnd:B0]{(B0)}-\hyperref[cnd:B4]{(B5)} can be easily verified is a random iterated function system, considered e.g. in \cite{ks}. Note that 
within such an example 
the Markov operator $P$ and the substochastic kernel $Q$ are defined explicitly, and thus verifying conditions \hyperref[cnd:B0]{(B0)}-\hyperref[cnd:B4]{(B5)} becomes just a technical part.

\section{A Criterion on the Strassen Invariance Principle for the LIL}\label{sec:LIL}

The section is divided into two parts. The first one contains a few general observations concerning martingales defined on the path space of a given ergodic Markov chain, while the second one presents the main result of this paper, that is, a criterion on the Strassen invariance principle for the LIL for a class on non-stationary Markov-Feller chains. The proof techniques that we use are mainly based on~\cite{bms} and  \cite{hhsw2}.

Consider an arbitrary stochastic kernel $\Pi:X\times\mathcal{B}_X\to[0,1]$ and the corresponding operators $P:\mathcal{M}_{fin}(X)\to \mathcal{M}_{fin}(X)$  and $U: B_b(X)\to B_b(X)$, given by \eqref{def:markov_op} and \eqref{def:dual_op}, respectively. Further, fix $\mu\in\mathcal{M}_1(X)$, and let $(\phi_n)_{n\in\mathbb{N}_0}$ be an $X$-valued time-homogeneous Markov chain on a probability space $(\Omega, \mathcal{B}_{\Omega},\mathbb{P}_{\mu})$ with transition law $\Pi$ and initial distribution $\mu$. 

To streamline the forthcoming proofs, in what follows, we assume (without loss of generality) that $(\phi_n)_{n\in\mathbb{N}_0}$ is defined as a canonical chain on the coordinate space, and thus we take  $\Omega=X^{\mathbb{N}_0}$. By $(\mathcal{F}_n)_{n\in\mathbb{N}_0}$ we shall denote the natural filtration of this~chain. Moreover, we let $T:\Omega\to \Omega$ stand for the shift operator, that is, $T(x_0,x_1,\ldots):=(x_1,x_2,\ldots)$ for any $(x_0,x_1,\ldots)\in\Omega.$

\subsection{Auxiliary Results}\label{subsec}
In the remainder of this subsection, we assume that $P$ admits a unique invariant probability measure $\mu_*$, and that $(P^n\mu)_{n\in\mathbb{N}_0}$ converges weakly to $\mu_*$, as $n\to\infty$.

Let $(m_n)_{n\in\mathbb{N}_0}$  be a real-valued martingale with respect to $(\mathcal{F}_n)_{n\in\mathbb{N}_0}$ such that $z_n=z_1\circ T^{n-1}$ for any $n\in\mathbb{N}$, where $z_n:=m_n-m_{n-1}$ and $m_0:=0$. Further, define
\begin{equation}\label{def:sigma}
\sigma^2:=\mathbb{E}_{\mu_*}\left(z_1^2\right)\in(0,\infty), 
\end{equation}
and
\begin{equation*}
h_n^2(\mu):=\mathbb{E}_{\mu}\left(m_n^2\right)\;\;\;\mbox{for}\;\;\;n\in\mathbb{N}_0,\;\;\;\mu\in\mathcal{M}_1(X).
\end{equation*}

Now, let $\Sigma_{T}\subset\mathcal{F}$ denote the $\sigma$-algebra of $T$-invariant sets, i.e.
\begin{equation*}
\Sigma_{T}=\left\{A\in {\mathcal{F}}:\;1{\hskip -2.5 pt}\hbox{l}_{T^{-1}(A)}=\mathbbm{1}_A\;\;\mathbb{P}_{\mu_*}\mbox{-a.s.}\right\}. 
\end{equation*}
Since $\mu_*$ is the unique stationary distribution of $(\phi_n)_{n\in\mathbb{N}_0}$, it follows that the measure $\mathbb{P}_{\mu_*}$ is invariant and ergodic with respect to $T$ (see \cite[Proposition 7.16]{nonlinear}), that is,
\begin{equation*}
\mathbb{P}_{\mu_*}\left(T^{-1}(A)\right)=\mathbb{P}_{\mu_*}(A)\quad\mbox{for all}\quad A\in{\mathcal{F}} \quad \text{and} \quad P_{\mu_*}(A) \in \{0,1\}\quad \text{for any}\quad A\in\Sigma_T. 
\end{equation*}
The Birkhoff theorem for ergodic Markov chains (cf. \cite[Theorem 7.19]{nonlinear}) then implies the following statement:

\begin{lemma}\label{lem:birkhoff}
If\; $Z:\Omega\to \mathbb{R}$ is a $\mathbb{P}_{\mu_*}$-integrable random variable, then 
\begin{equation*}
\lim_{n\to\infty}\frac{1}{n}\sum_{l=0}^{n-1}Z\circ T^l=\mathbb{E}_{\mu_*}\left(Z|\Sigma_{T}\right)=\mathbb{E}_{\mu_*}(Z)\;\;\;\mathbb{P}_{\mu_*}\mbox{-a.s}.
\end{equation*} 
\end{lemma}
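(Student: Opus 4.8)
The plan is to recognize this statement as Birkhoff's pointwise ergodic theorem specialized to the shift $T$, combined with the ergodicity of $\mathbb{P}_{\mu_*}$ recorded immediately above. Two structural facts are already in hand: the measure $\mathbb{P}_{\mu_*}$ is invariant under $T$, so that $T$ is a measure-preserving transformation of the probability space $(\Omega,\mathcal{F},\mathbb{P}_{\mu_*})$; and $\mathbb{P}_{\mu_*}$ is ergodic, i.e. every $A\in\Sigma_T$ satisfies $\mathbb{P}_{\mu_*}(A)\in\{0,1\}$. Since $Z$ is $\mathbb{P}_{\mu_*}$-integrable (and real-valued), both hypotheses of the classical theorem are met.

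First I would invoke Birkhoff's pointwise ergodic theorem, in the form cited from \cite{doob} and \cite[Theorem 17.1.2]{mt}, for the measure-preserving system $(\Omega,\mathcal{F},\mathbb{P}_{\mu_*},T)$. It yields that the Ces\`aro averages $\tfrac{1}{n}\sum_{l=0}^{n-1}Z\circ T^l$ converge $\mathbb{P}_{\mu_*}$-almost surely (and in $L^1$) to the conditional expectation $\mathbb{E}_{\mu_*}(Z|\Sigma_T)$ of $Z$ given the invariant $\sigma$-algebra $\Sigma_T$. This establishes the first equality in the assertion.

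The second equality follows from ergodicity. Since $\mathbb{E}_{\mu_*}(Z|\Sigma_T)$ is $\Sigma_T$-measurable and every set of $\Sigma_T$ is $\mathbb{P}_{\mu_*}$-trivial, this conditional expectation is $\mathbb{P}_{\mu_*}$-a.s. equal to a constant: any $\Sigma_T$-measurable random variable agrees almost surely with a constant under an ergodic measure, because each of its sublevel sets lies in $\Sigma_T$ and hence has probability $0$ or $1$. Taking expectations and applying the tower property $\mathbb{E}_{\mu_*}\big(\mathbb{E}_{\mu_*}(Z|\Sigma_T)\big)=\mathbb{E}_{\mu_*}(Z)$ identifies this constant as $\mathbb{E}_{\mu_*}(Z)$, which completes the argument.

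I do not expect a genuine obstacle here, as the two properties required for Birkhoff's theorem, namely invariance and ergodicity of $\mathbb{P}_{\mu_*}$, have already been verified in the preceding discussion. The only point deserving a word of care is the reduction of the $\Sigma_T$-conditional expectation to a constant; this is the standard consequence of ergodicity, obtained by noting that the sublevel sets of a $\Sigma_T$-measurable function belong to the trivial $\sigma$-algebra $\Sigma_T$.
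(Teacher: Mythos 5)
Your proposal is correct and matches the paper's treatment: the paper states this lemma as a quoted result, justified by exactly the facts you use --- the $T$-invariance and ergodicity of $\mathbb{P}_{\mu_*}$ established just before, combined with the SLLN for stationary sequences (Birkhoff's theorem) cited from \cite{doob} and \cite[Theorem 17.1.2]{mt}. Your filling in of the routine step that a $\Sigma_T$-measurable function is $\mathbb{P}_{\mu_*}$-a.s.\ constant under ergodicity is the standard detail the paper leaves implicit.
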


Let $(\phi_n^{(1)},\phi_n^{(2)})_{n\in\mathbb{N}_0}$ be an arbitrary Markovian coupling of $\Pi$ defined on some properly constructed probability space $(\bar{\Omega}, \bar{\mathcal{F}}, \mathbb{C})$, and recall that $\mathbb{E}_{x,y}$ denotes the expectation operator with respect to \hbox{$\mathbb{C}_{x,y}=\mathbb{C}(\cdot|\phi_0^{(1)}=x,\phi_0^{(2)}=y)$} for every $(x,y)\in X^2$.
For any given random variable $Z:\Omega\to \mathbb{R}$, we can now consider two copies of $Z$, defined as
\begin{equation}\label{def:copies}
Z^{(i)}(\omega):=Z\left(\phi_0^{(i)}(\omega),\phi_1^{(i)}(\omega),\ldots\right)\;\;\;\mbox{for}\;\;\;\omega\in\bar{\Omega}\;\;\;\mbox{and}\;\;\;i\in\{1,2\}.
\end{equation}

In what follows, we formulate a few lemmas, whose proofs are given in Section \hbox{\ref{append}.}

\begin{lemma}\label{lem:constant}
Suppose that   
\begin{equation}\label{cond:a}
\sum_{n=1}^{\infty}\mathbb{E}_{x,y}\left|z_n^{(1)}-z_n^{(2)}\right|<\infty\;\;\;\mbox{for any}\;\;\;(x,y)\in X^2.
\end{equation}
Then, for any $m\in\mathbb{N}\cup\{\infty\}$ and any $c\in\mathbb{R}_+$, the functions $f_{m,c}^{\inf}, f_{m,c}^{\sup}:X\to\mathbb{R}$, given by
\begin{align}\label{def:functions}
\begin{aligned}
f_{m,c}^{\inf}(x):=\mathbb{E}_x\left(\left|\liminf\limits_{n\to \infty}\frac{1}{n}\sum_{l=1}^n\left(z_l^2\wedge m\right)-c\right|\wedge 1\right),\\
f_{m,c}^{\sup}(x):=\mathbb{E}_x\left(\left|\limsup\limits_{n\to \infty}\frac{1}{n}\sum_{l=1}^n\left(z_l^2\wedge m\right)-c\right|\wedge 1\right)
\end{aligned}
\end{align}
are constant (and, in particular, continuous). By convention, we put $t\wedge m:=t$\, if\, $m=\infty$ and $t\in\mathbb{R}$.
\end{lemma}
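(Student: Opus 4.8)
The plan is to show that the functions $f_{m,c}^{\inf}$ and $f_{m,c}^{\sup}$ are simultaneously $T$-invariant along the chain and constant $\mathbb{P}_{\mu_*}$-almost everywhere, and then to combine these two facts. First I would observe that the random variables inside the expectations, namely $\liminf_{n\to\infty}\frac{1}{n}\sum_{l=1}^n(z_l^2\wedge m)$ and the corresponding $\limsup$, are shift-invariant on the path space $\Omega$: since $z_l=z_1\circ T^{l-1}$, replacing the path by its shift $T$ merely deletes the first term of the Ces\`aro average, which does not affect the $\liminf$ or $\limsup$. Hence, writing $W_m^{\inf}=\liminf_{n}\frac{1}{n}\sum_{l=1}^n(z_l^2\wedge m)$, we have $W_m^{\inf}\circ T=W_m^{\inf}$ pointwise, and likewise for the $\limsup$. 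Consequently the bounded measurable function $\omega\mapsto(|W_m^{\inf}(\omega)-c|\wedge 1)$ is $T$-invariant.

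Next I would exploit that under $\mathbb{P}_{\mu_*}$ the shift $T$ is ergodic (as stated in the excerpt). A bounded $T$-invariant function is therefore $\mathbb{P}_{\mu_*}$-almost surely equal to a constant; in particular $|W_m^{\inf}-c|\wedge 1$ equals some constant $\kappa_{m,c}^{\inf}$ for $\mathbb{P}_{\mu_*}$-almost every path. The key identity to record is that $f_{m,c}^{\inf}(x)=\mathbb{E}_x(|W_m^{\inf}-c|\wedge 1)$, so that integrating against $\mu_*$ gives $\int_X f_{m,c}^{\inf}\,d\mu_*=\mathbb{E}_{\mu_*}(|W_m^{\inf}-c|\wedge 1)=\kappa_{m,c}^{\inf}$. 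The remaining task is to upgrade ``constant $\mu_*$-a.e.'' to ``constant on all of $X$,'' and this is where the coupling hypothesis \eqref{cond:a} enters.

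The main obstacle, and the heart of the argument, is to prove that $f_{m,c}^{\inf}$ is genuinely constant for \emph{every} starting point $x$, not merely $\mu_*$-almost every $x$. Here I would use the coupling $(\phi_n^{(1)},\phi_n^{(2)})$ and the lifted copies $z_l^{(i)}$ from \eqref{def:copies}. For any two starting points $x,y$, I would estimate $|f_{m,c}^{\inf}(x)-f_{m,c}^{\inf}(y)|$ by coupling the two chains started from $x$ and $y$ respectively and comparing their Ces\`aro averages. Because $u\mapsto(|u-c|\wedge 1)$ is $1$-Lipschitz and bounded by $1$, the difference is controlled by $\mathbb{E}_{x,y}\big|W_m^{\inf,(1)}-W_m^{\inf,(2)}\big|$, where the superscripts indicate the averages built from the two coordinates of the coupling. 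The crucial point is that
\begin{align*}
\left|W_m^{\inf,(1)}-W_m^{\inf,(2)}\right|\leq \limsup_{n\to\infty}\frac{1}{n}\sum_{l=1}^n\left|(z_l^{(1)})^2\wedge m-(z_l^{(2)})^2\wedge m\right|,
\end{align*}
and since $t\mapsto t\wedge m$ composed with squaring is Lipschitz on the relevant range (or one argues directly that $|z_l^{(1)}-z_l^{(2)}|$ small forces the truncated squares close), the hypothesis \eqref{cond:a} that $\sum_n\mathbb{E}_{x,y}|z_n^{(1)}-z_n^{(2)}|<\infty$ forces the tail averages to vanish. Indeed summability of $\mathbb{E}_{x,y}|z_n^{(1)}-z_n^{(2)}|$ gives, by the Kronecker lemma, $\frac{1}{n}\sum_{l=1}^n|z_l^{(1)}-z_l^{(2)}|\to 0$ in $L^1(\mathbb{C}_{x,y})$, whence the right-hand side above has zero expectation. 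This yields $f_{m,c}^{\inf}(x)=f_{m,c}^{\inf}(y)$ for all $x,y$, i.e. the function is constant; the identical argument applies verbatim to $f_{m,c}^{\sup}$. The case $m=\infty$ is covered by the same reasoning under the stated convention, with the truncation simply removed. Constancy trivially implies continuity, completing the proof.
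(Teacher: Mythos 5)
Your coupling argument is sound for every \emph{finite} $m$, and there it is essentially the paper's own strategy in a cleaner form: by \eqref{cond:a} and Tonelli, $\sum_l|z_l^{(1)}-z_l^{(2)}|<\infty$ $\mathbb{C}_{x,y}$-a.s., so the Ces\`aro averages of the differences vanish a.s., and the Lipschitz bound $|u^2\wedge m-v^2\wedge m|\le 2\sqrt{m}\,|u-v|$ transfers this to the truncated squares. The genuine gap is the case $m=\infty$, which you dismiss with ``the truncation simply removed.'' Without truncation, $u\mapsto u^2$ is not Lipschitz: $|(z_l^{(1)})^2-(z_l^{(2)})^2|=|z_l^{(1)}+z_l^{(2)}|\,|z_l^{(1)}-z_l^{(2)}|$, and hypothesis \eqref{cond:a} gives no control whatsoever on $|z_l^{(1)}+z_l^{(2)}|$ (take $|z_l^{(1)}-z_l^{(2)}|=l^{-2}$ but $z_l^{(1)}\sim l^{3}$: the term differences blow up and the Ces\`aro averages diverge). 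Your key inequality also degenerates there: when both liminfs are $+\infty$, the right-hand side $|W_\infty^{\inf,(1)}-W_\infty^{\inf,(2)}|$ is not even defined, whereas the quantity you actually need, $\bigl|\,|W^{(1)}-c|\wedge 1-|W^{(2)}-c|\wedge 1\,\bigr|$, is $0$. That is exactly the information you discard by pushing the difference inside \emph{before} using the cap $\wedge 1$. And $m=\infty$ is not a side case: it is the case required in Lemma \ref{lem:continuous}, which in turn yields \eqref{eq:h_n_1} in Lemma \ref{lem:dawida}. (A secondary slip: $L^1$ convergence of the Ces\`aro averages does not by itself bound the expectation of their a.s.\ $\limsup$; you need the a.s.\ convergence, which fortunately follows from a.s.\ summability.)

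The paper closes this hole by exploiting the outer $\wedge 1$ \emph{before} comparing the two copies: since $|u-c|\wedge 1=|u\wedge(1+c)-c|\wedge 1$ for $u\ge 0$, one may replace each average $A_n=\frac1n\sum_{l}(z_l^2\wedge m)$ by
\begin{align*}
A_n\wedge(1+c)=\frac1n\Bigl[\Bigl(\sum_{l}\bigl(z_l^2\wedge m\bigr)\Bigr)\wedge n(1+c)\Bigr],
\end{align*}
and on these capped sums the squares are Lipschitz with constant of order $\sqrt{n(1+c)}$ (the paper uses the cruder $2n(1+c)$, together with \eqref{general_fact} to drop the $\wedge m$), so after dividing by $n$ the comparison between the two coordinates of the coupling is controlled, uniformly in $m$ including $m=\infty$, by $2(1+c)\sum_{l\ge l_0}\mathbb{E}_{x,y}|z_l^{(1)}-z_l^{(2)}|$, which tends to $0$ as $l_0\to\infty$ by \eqref{cond:a}. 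You can repair your proof with precisely this modification: cap your averages at $1+c$ before applying your Lipschitz/summability step; the rest of your argument then goes through. Incidentally, your opening paragraphs on shift-invariance and ergodicity of $\mathbb{P}_{\mu_*}$ are not needed for this lemma --- constancy on all of $X$ follows from the coupling estimate alone; that material belongs to Lemma \ref{lem:continuous}.
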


\begin{lemma}\label{lem:continuous}
Suppose that the functions $f_{m,c}^{\inf}$ and $f_{m,c}^{\sup}$, given by \eqref{def:functions}, are continuous for all $m\in\mathbb{N}\cup\{\infty\}$ and any $c\in\mathbb{R}_+$. Then, for every $m\in\mathbb{N}\cup \{\infty\}$, we have
\begin{equation*}
\lim_{n\to\infty}\frac{1}{n}\sum_{l=1}^{n}\left(z_l^2\wedge m\right)=\mathbb{E}_{\mu_*}\left(z_1^2\wedge m\right)\;\;\;\mathbb{P}_{\mu}\mbox{-a.s.}
\end{equation*}
In particular, for $m=\infty$, we obtain
\begin{equation*}
\lim_{n\to\infty}\frac{1}{n}\sum_{l=1}^{n}z_l^2=\sigma^2\;\;\;\mathbb{P}_{\mu}\mbox{-a.s.},
\end{equation*}
where $\sigma^2$ is defined by \eqref{def:sigma}.
\end{lemma}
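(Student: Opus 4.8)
\emph{Plan.} Fix $m\in\mathbb{N}\cup\{\infty\}$ and abbreviate
$$
L^{\inf}(\omega)=\liminf_{n\to\infty}\frac1n\sum_{l=1}^n\bigl(z_l^2(\omega)\wedge m\bigr),\qquad
L^{\sup}(\omega)=\limsup_{n\to\infty}\frac1n\sum_{l=1}^n\bigl(z_l^2(\omega)\wedge m\bigr),
$$
together with $c_m:=\mathbb{E}_{\mu_*}(z_1^2\wedge m)$, so that $f_{m,c}^{\inf}(x)=\mathbb{E}_x(|L^{\inf}-c|\wedge1)$ and $f_{m,c}^{\sup}(x)=\mathbb{E}_x(|L^{\sup}-c|\wedge1)$. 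Each of these functions is bounded by $1$, so the assertion is equivalent to $L^{\inf}=L^{\sup}=c_m$ $\mathbb{P}_\mu$-a.s.\ for every $\mu$. The plan is to establish this first under the stationary measure $\mathbb{P}_{\mu_*}$ via Birkhoff's theorem, and then to transport it to an arbitrary $\mu$ by exploiting that $L^{\inf}$ and $L^{\sup}$ are shift-invariant, which turns $f_{m,c}^{\inf},f_{m,c}^{\sup}$ into bounded harmonic functions to which the continuity hypothesis applies.

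First I would note that, since $z_l=z_1\circ T^{l-1}$, we have $z_l^2\wedge m=(z_1^2\wedge m)\circ T^{l-1}$, so the relevant Ces\`aro averages are Birkhoff averages of $Z:=z_1^2\wedge m$, which is $\mathbb{P}_{\mu_*}$-integrable ($Z$ is bounded when $m<\infty$, while for $m=\infty$ one has $\mathbb{E}_{\mu_*}(z_1^2)=\sigma^2<\infty$). Lemma \ref{lem:birkhoff} and the ergodicity of $\mathbb{P}_{\mu_*}$ then give $\frac1n\sum_{l=1}^n(z_l^2\wedge m)\to c_m$ $\mathbb{P}_{\mu_*}$-a.s., hence $L^{\inf}=L^{\sup}=c_m$ $\mathbb{P}_{\mu_*}$-a.s., so that $\langle f_{m,c_m}^{\inf},\mu_*\rangle=\langle f_{m,c_m}^{\sup},\mu_*\rangle=0$. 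Next, writing $A_n=\frac1n\sum_{l=1}^n(z_l^2\wedge m)$ and using $z_l\circ T=z_{l+1}$, a direct computation yields $A_n\circ T=\tfrac{n+1}{n}A_{n+1}-\tfrac1n(z_1^2\wedge m)$; as the averages are nonnegative, passing to $\liminf$ and $\limsup$ shows $L^{\inf}\circ T=L^{\inf}$ and $L^{\sup}\circ T=L^{\sup}$ pathwise, with no boundedness required. Consequently the bounded functional $G=|L^{\inf}-c|\wedge1$ is $T$-invariant, and the Markov property gives
$$
f_{m,c}^{\inf}(x)=\mathbb{E}_x(G)=\mathbb{E}_x(G\circ T)=\mathbb{E}_x\bigl(\mathbb{E}_{\phi_1}(G)\bigr)=U f_{m,c}^{\inf}(x),
$$
and likewise $f_{m,c}^{\sup}=Uf_{m,c}^{\sup}$; thus both functions are bounded by $1$, harmonic for $U$, and continuous by hypothesis.

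To propagate the conclusion to arbitrary $\mu$, I would iterate harmonicity and use the duality $\langle Uf,\mu\rangle=\langle f,P\mu\rangle$ to obtain $\langle f_{m,c}^{\inf},\mu\rangle=\langle f_{m,c}^{\inf},P^n\mu\rangle$ for all $n$. Since $f_{m,c}^{\inf}\in C_b(X)$ and $P^n\mu\to\mu_*$ weakly, letting $n\to\infty$ gives $\langle f_{m,c}^{\inf},\mu\rangle=\langle f_{m,c}^{\inf},\mu_*\rangle$ for every $\mu\in\mathcal{M}_1(X)$, and taking $\mu=\delta_x$ forces $f_{m,c}^{\inf}$ to equal the constant $\langle f_{m,c}^{\inf},\mu_*\rangle$ (and similarly for $f_{m,c}^{\sup}$). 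Specializing $c=c_m$ and recalling the stationary computation above, we get $f_{m,c_m}^{\inf}\equiv0$ and $f_{m,c_m}^{\sup}\equiv0$; hence for every $\mu$ one has $\mathbb{E}_\mu(|L^{\inf}-c_m|\wedge1)=\langle f_{m,c_m}^{\inf},\mu\rangle=0$ and analogously for $L^{\sup}$, so $L^{\inf}=L^{\sup}=c_m$ $\mathbb{P}_\mu$-a.s. For $m=\infty$ this limit is $c_\infty=\mathbb{E}_{\mu_*}(z_1^2)=\sigma^2$, giving the stated special case.

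The conceptual heart, and the only place where the continuity hypothesis is genuinely used, is the last step: one must upgrade the bounded harmonic functions $f_{m,c}^{\inf},f_{m,c}^{\sup}$ to honest constants. This is precisely where continuity combines with the weak convergence $P^n\mu\to\mu_*$, through the duality identity, to pin the value at every point to the $\mu_*$-average. The remaining care lies in verifying the pathwise shift-invariance uniformly in $m$, in particular for the unbounded case $m=\infty$, where it is the nonnegativity of $z_l^2$ (and not any boundedness) that makes the $\liminf/\limsup$ identities hold.
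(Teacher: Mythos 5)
Your proposal is correct and takes essentially the same route as the paper's proof: Birkhoff's theorem under $\mathbb{P}_{\mu_*}$ (Lemma \ref{lem:birkhoff}) to get $\langle f_{m,c_m}^{\inf},\mu_*\rangle=\langle f_{m,c_m}^{\sup},\mu_*\rangle=0$, shift-invariance of the $\liminf/\limsup$ functionals combined with the Markov property to identify $\mathbb{E}_{\mu}\left(|L^{\inf}-c_m|\wedge 1\right)$ with $\langle f_{m,c_m}^{\inf},P^N\mu\rangle$, and finally continuity plus the weak convergence $P^N\mu\to\mu_*$ to conclude that this quantity vanishes. The only difference is cosmetic: you package the Markov step as harmonicity $Uf=f$ iterated (with an extra, harmless detour through constancy of $f_{m,c}^{\inf}$ and $f_{m,c}^{\sup}$), whereas the paper conditions directly on $\mathcal{F}_N$ in a single step.
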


\begin{lemma}\label{lem:dawida}
Suppose that condition \eqref{cond:a} holds, and that, for some \hbox{$\mu\in\mathcal{M}_1(X)$,} there exists $r\in(0,2)$ such that
\begin{equation}\label{cond:b}
\sup_{n\in\mathbb{N}}\mathbb{E}_{\mu}
|z_n|^{2+r}<\infty.
\end{equation}
Then
\begin{equation}\label{eq:h_n_sigma}
\lim_{n\to\infty}\frac{h_n^2(\mu)}{n}=\sigma^2,
\end{equation}
and also
\begin{equation}\label{eq:h_n_1}
\lim_{n\to\infty}\frac{1}{h_n^2(\mu)}\sum_{l=1}^{n}z_l^2=1\;\;\;\mathbb{P}_{\mu}\mbox{-a.s.}
\end{equation}
\end{lemma}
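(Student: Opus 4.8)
The plan is to derive both conclusions from the almost-sure ergodic convergence already furnished by Lemma~\ref{lem:continuous}, upgrading it to convergence in $L^1(\mathbb{P}_\mu)$ by means of a uniform-integrability estimate extracted from \eqref{cond:b}. Since \eqref{cond:a} is part of the hypotheses, Lemma~\ref{lem:constant} ensures that the functions $f_{m,c}^{\inf}$ and $f_{m,c}^{\sup}$ are constant, hence continuous, so Lemma~\ref{lem:continuous} is applicable. Taking $m=\infty$ there, and abbreviating $A_n:=\frac1n\sum_{l=1}^n z_l^2$, I obtain
\begin{align*}
A_n\longrightarrow\sigma^2\qquad\mathbb{P}_\mu\text{-a.s.}
\end{align*}
Note that once \eqref{eq:h_n_sigma} is established this immediately gives \eqref{eq:h_n_1}, because $\frac{1}{h_n^2(\mu)}\sum_{l=1}^n z_l^2=\frac{n}{h_n^2(\mu)}\,A_n$ and $\sigma^2\in(0,\infty)$ by \eqref{def:sigma}.

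The core of the argument is thus \eqref{eq:h_n_sigma}. First I would record the orthogonality of the martingale increments: as $(m_n)$ is an $(\mathcal{F}_n)$-martingale with $m_0=0$ and $m_n=\sum_{l=1}^n z_l$, the centering identity $\mathbb{E}_\mu(z_k\mid\mathcal{F}_{k-1})=0$ forces $\mathbb{E}_\mu(z_lz_k)=0$ whenever $l\neq k$ (the relevant second moments being finite by \eqref{cond:b}). Consequently
\begin{align*}
h_n^2(\mu)=\mathbb{E}_\mu\big(m_n^2\big)=\sum_{l=1}^n\mathbb{E}_\mu\big(z_l^2\big)=n\,\mathbb{E}_\mu\big(A_n\big),
\end{align*}
so that \eqref{eq:h_n_sigma} is nothing but the statement $\mathbb{E}_\mu(A_n)\to\sigma^2$. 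It therefore suffices to promote the almost-sure convergence $A_n\to\sigma^2$ to convergence in $L^1(\mathbb{P}_\mu)$.

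To secure uniform integrability of $(A_n)$ I would put $p:=1+r/2>1$ and apply Jensen's inequality to the convex map $t\mapsto t^p$, obtaining the pointwise bound $A_n^p\leq\frac1n\sum_{l=1}^n|z_l|^{2p}$ and hence
\begin{align*}
\mathbb{E}_\mu\big(A_n^p\big)\leq\frac1n\sum_{l=1}^n\mathbb{E}_\mu\big(|z_l|^{2p}\big)=\frac1n\sum_{l=1}^n\mathbb{E}_\mu\big(|z_l|^{2+r}\big)\leq\sup_{l\in\mathbb{N}}\mathbb{E}_\mu|z_l|^{2+r}<\infty,
\end{align*}
the final bound being exactly \eqref{cond:b}. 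Thus $(A_n)$ is bounded in $L^p(\mathbb{P}_\mu)$ for some $p>1$ and is therefore uniformly integrable; combined with $A_n\to\sigma^2$ almost surely, this yields $A_n\to\sigma^2$ in $L^1(\mathbb{P}_\mu)$, whence $\mathbb{E}_\mu(A_n)\to\sigma^2$ and \eqref{eq:h_n_sigma} follows. Finally, since \eqref{eq:h_n_sigma} gives $h_n^2(\mu)/n\to\sigma^2\neq0$, the identity $\frac{1}{h_n^2(\mu)}\sum_{l=1}^n z_l^2=\frac{n}{h_n^2(\mu)}\,A_n$ together with $A_n\to\sigma^2$ almost surely delivers \eqref{eq:h_n_1}.

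The proof is short once Lemma~\ref{lem:continuous} is in place, and the only genuinely load-bearing step is the passage from almost-sure to $L^1$ convergence via uniform integrability; the martingale orthogonality and the remaining algebra are routine. I would expect the points requiring care to be the verification that $(m_n)$ retains its martingale property (and hence that the cross terms $\mathbb{E}_\mu(z_lz_k)$ vanish) under the specific initial law $\mu$ fixed in the statement, not merely under the stationary measure $\mathbb{P}_{\mu_*}$, and the confirmation that \eqref{cond:b} supplies $L^p$-boundedness with a single exponent $p>1$ uniformly in $n$.
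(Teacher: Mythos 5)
Your proof is correct, and it shares the paper's overall skeleton: the almost-sure limit of $A_n=\frac{1}{n}\sum_{l=1}^n z_l^2$ obtained from Lemmas~\ref{lem:constant} and \ref{lem:continuous}, the orthogonality identity $h_n^2(\mu)=\sum_{l=1}^n\mathbb{E}_{\mu}(z_l^2)$, and a moment-based interchange of limit and expectation, with \eqref{eq:h_n_1} then read off from \eqref{eq:h_n_sigma} exactly as in the paper. Where you genuinely diverge is in the interchange step. The paper never considers the averages $A_n$ in $L^p$: it works with the truncated averages $\frac{1}{n}\sum_{l=1}^n(z_l^2\wedge m)$ for finite $m$ (where the bounded/dominated convergence theorem applies, since these are bounded by $m$), and then removes the truncation via the explicit uniform tail estimate
\begin{align*}
\sup_{l\in\mathbb{N}}\mathbb{E}_{\mu}\left(z_l^2\mathbbm{1}_{\{z_l^2\geq m\}}\right)\leq m^{-r/2}\sup_{l\in\mathbb{N}}\mathbb{E}_{\mu}|z_l|^{2+r}\longrightarrow 0\quad\text{as}\quad m\to\infty,
\end{align*}
finishing with a three-term triangle inequality. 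You instead use only the $m=\infty$ case of Lemma~\ref{lem:continuous} and promote almost-sure convergence to $L^1(\mathbb{P}_{\mu})$ convergence by uniform integrability: Jensen gives $\mathbb{E}_{\mu}(A_n^{p})\leq\sup_{l}\mathbb{E}_{\mu}|z_l|^{2+r}<\infty$ with $p=1+r/2>1$, so $(A_n)_{n}$ is $L^p$-bounded, hence uniformly integrable, and Vitali's theorem yields $\mathbb{E}_{\mu}(A_n)\to\sigma^2$. Both mechanisms extract exactly the same strength from \eqref{cond:b}; yours is shorter and cleaner (one Jensen bound plus a citation of a standard theorem), while the paper's truncation argument is self-contained --- it is, in effect, a hands-on proof of the uniform integrability you invoke, carried out at the level of the family $\{z_l^2\}_{l}$ rather than of the averages, and it keeps the finite-$m$ statements of Lemma~\ref{lem:continuous} in play, which is the form in which that lemma is naturally proved. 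Your two cautionary remarks are legitimate but unproblematic: the setup of Section~\ref{subsec} postulates that $(m_n)_{n\in\mathbb{N}_0}$ is an $(\mathcal{F}_n)_{n\in\mathbb{N}_0}$-martingale on $(\Omega,\mathcal{B}_{\Omega},\mathbb{P}_{\mu})$, and \eqref{cond:b} gives $z_n\in L^2(\mathbb{P}_{\mu})$, so the cross terms vanish just as the paper tacitly assumes.
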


\begin{lemma}\label{lem:ogolny}
Suppose that condition \eqref{cond:a} holds, and that  \eqref{cond:b} is fulfilled with some $r\in(0,2)$. Then, there exists $N\in\mathbb{N}$ such that $h_n(\mu)>0$ for all $n \geq N$, and the following statements hold:
\begin{align}
\sum_{n=N}^{\infty}
h_n^{-4}(\mu)\mathbb{E}_{\mu}\left(z_n^4
\mathbbm{1}_{\left\{|z_n|<\,\upsilon\, h_n(\mu)\right\}}\right)
< \infty\;\;\;\mbox{for every}\;\;\;\upsilon>0,\label{eq:lil1}\\
\sum_{n=N}^{\infty}
h_n^{-1}(\mu)\mathbb{E}\left(|z_n|
\mathbbm{1}_{\left\{|z_n|\geq\,\vartheta\, h_n(\mu)\right\}}\right)
<\infty\;\;\;\mbox{for every}\;\;\;\vartheta>0.\label{eq:lil2}
\end{align}
\end{lemma}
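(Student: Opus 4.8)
The plan is to combine the asymptotics of the normalising sequence $h_n(\mu)$, furnished by Lemma \ref{lem:dawida}, with the uniform moment bound \eqref{cond:b}, thereby reducing both \eqref{eq:lil1} and \eqref{eq:lil2} to the convergence of a single series $\sum_n h_n^{-(2+r)}(\mu)$. First I would extract what is needed from Lemma \ref{lem:dawida}: since \eqref{cond:a} and \eqref{cond:b} are assumed, \eqref{eq:h_n_sigma} gives $h_n^2(\mu)/n\to\sigma^2$, and as $\sigma^2\in(0,\infty)$ by \eqref{def:sigma}, there is $N\in\mathbb{N}$ with $h_n^2(\mu)>0$, hence $h_n(\mu)>0$, for every $n\geq N$; this already yields the first assertion of the lemma. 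Moreover, writing $h_n^{-(2+r)}(\mu)=(h_n^2(\mu)/n)^{-(2+r)/2}\,n^{-(2+r)/2}$, the first factor converges to the finite positive constant $\sigma^{-(2+r)}$, so $\sum_{n\geq N}h_n^{-(2+r)}(\mu)$ converges by comparison with $\sum_n n^{-(2+r)/2}$, which is finite precisely because $r>0$ forces $(2+r)/2>1$. Setting $M:=\sup_{n\in\mathbb{N}}\mathbb{E}_{\mu}|z_n|^{2+r}<\infty$, it remains only to bound each summand in \eqref{eq:lil1} and \eqref{eq:lil2} by a constant multiple of $h_n^{-(2+r)}(\mu)$.

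For \eqref{eq:lil1} I would exploit the truncation event $\{|z_n|<\upsilon h_n(\mu)\}$, on which $|z_n|^{2-r}\leq(\upsilon h_n(\mu))^{2-r}$ (here $2-r>0$ since $r<2$), so that
\begin{align*}
z_n^4\,\mathbbm{1}_{\{|z_n|<\upsilon h_n(\mu)\}}=|z_n|^{2+r}\,|z_n|^{2-r}\,\mathbbm{1}_{\{|z_n|<\upsilon h_n(\mu)\}}\leq(\upsilon h_n(\mu))^{2-r}\,|z_n|^{2+r}.
\end{align*}
Taking $\mathbb{E}_{\mu}$, using $M$, and multiplying by $h_n^{-4}(\mu)$ yields the bound $M\upsilon^{2-r}h_n^{-(2+r)}(\mu)$, whence \eqref{eq:lil1} follows by the summability established above. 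For \eqref{eq:lil2} I would use the complementary event $\{|z_n|\geq\vartheta h_n(\mu)\}$, on which $1\leq(\vartheta h_n(\mu))^{-(1+r)}|z_n|^{1+r}$, giving
\begin{align*}
|z_n|\,\mathbbm{1}_{\{|z_n|\geq\vartheta h_n(\mu)\}}\leq(\vartheta h_n(\mu))^{-(1+r)}\,|z_n|^{2+r};
\end{align*}
taking $\mathbb{E}_{\mu}$, using $M$, and multiplying by $h_n^{-1}(\mu)$ gives the bound $M\vartheta^{-(1+r)}h_n^{-(2+r)}(\mu)$, so \eqref{eq:lil2} follows in the same way.

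The argument is elementary once Lemma \ref{lem:dawida} is in hand: both series reduce, via the interpolation between the $(2+r)$-th moment and the truncation threshold, to $\sum_n h_n^{-(2+r)}(\mu)$. Accordingly, the only step deserving genuine care is the passage from the abstract normalisation $h_n(\mu)$ to the explicit rate $n^{-(2+r)/2}$, which hinges on invoking \eqref{eq:h_n_sigma} with $\sigma^2>0$; after that, convergence of both series is immediate because $r>0$ keeps the exponent $(2+r)/2$ strictly above $1$.
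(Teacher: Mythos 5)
Your proposal is correct and follows essentially the same route as the paper: both bound the summands of \eqref{eq:lil1} and \eqref{eq:lil2} by $\upsilon^{2-r}Mh_n^{-(2+r)}(\mu)$ and $\vartheta^{-(1+r)}Mh_n^{-(2+r)}(\mu)$ respectively, via the same interpolation against the $(2+r)$-th moment on the truncation events, and then invoke summability of $\sum_n h_n^{-(2+r)}(\mu)$. The only difference is cosmetic: you spell out why this series converges (comparison with $\sum_n n^{-(2+r)/2}$ via \eqref{eq:h_n_sigma} and $\sigma^2>0$), whereas the paper attributes this step to Lemma \ref{lem:dawida} without elaboration.
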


\subsection{The Invariance Principle for the LIL  for Certain Markov-Feller Chains}\label{sec:lil}
In the analysis that follows, we additionally require that the Markov operator $P$ enjoys the Feller property, stated as condition \hyperref[cnd:B0]{(B0)}, and that \hyperref[cnd:B1]{(B1)} holds with the Lyapunov function $V$ of the form
\begin{equation*}
V(x)=\rho(x,\bar{x}) \quad \text{for} \quad x\in X,
\end{equation*}
where $\bar{x}$ is an arbitrarily fixed point of $X$. Moreover, we assume that there exists a substochastic kernel $Q$ on $X^2 \times \mathcal{B}_{X^2}$, satisfying \eqref{def:substoch}, such that hypotheses \hyperref[cnd:B2]{(B2)}-\hyperref[cnd:B5]{(B5)} hold for some $F\subset X^2$. Under these settings, Theorem \ref{thm:spectral_gap} yields that $P$ possesses a unique invariant probability measure $\mu_*$, such that $\mu_* \in \mathcal{M}_{1,1}^V(X)$, and that condition \eqref{e:exp_erg} is fulfilled for some $q\in (0,1)$ and some $c<\infty$.

Let $g \in Lip_b(X)$, and define $\bar{g}:=g-\langle g,\mu_*\rangle$. Obviously, $\langle \bar{g},\mu_*\rangle=0$. Using \eqref{e:exp_erg}, for any $x\in X$ and any $i\in\mathbb{N}$, we can write
\begin{align}\label{new_estim}
\begin{aligned}
\left\langle \bar{g},P^i\delta_x\right\rangle
&=\left\langle \bar{g},P^i\delta_x\right\rangle-\left\langle\bar{g},\mu_*\right\rangle
\leq\|\bar{g}\|_{BL}\,d_{FM}\left(P^i\delta_x,\mu_*\right)\\
&\leq c\|\bar{g}\|_{BL}\,q^i\left(1+\varrho(x,\bar{x})+\left\langle \varrho(\cdot,\bar{x}),\mu_*\right\rangle\right)
\leq \hat{c}\|\bar{g}\|_{BL}\,q^i\left(1+\varrho(x,\bar{x})\right),
\end{aligned}
\end{align}
where $\hat{c}:=c(1+\langle \varrho(\cdot,\bar{x}),\mu_*\rangle)$. 
It then follows that
\begin{equation}\label{eq:integrable}
\sum_{i=0}^{\infty}\left|U^i\bar{g}(x)\right|
=\sum_{i=0}^{\infty}
\left|\left\langle \bar{g},P^i\delta_x-\mu_*\right\rangle\right|
\leq \frac{\hat{c}\|\bar{g}\|_{BL}}{1-{q}}\left(1+\varrho(x,\bar{x})\right),
\end{equation}
which enables us to define
\begin{equation}\label{def:chi}
\chi(\bar{g})(x):=\sum_{i=0}^{\infty}U^i\bar{g}(x)\;\;\;\mbox{for any}\;\;\; x\in X.
\end{equation}
Note that $\chi(\bar{g})$ has the following property:
\begin{align}\label{prop_chi}
\begin{aligned}
|\chi(\bar{g})(x)-\chi(\bar{g})(y)|
&\leq \sum_{i=0}^{\infty}\left|\left\langle \bar{g},P^i\delta_x-P^i\delta_y\right\rangle\right|
\leq \|\bar{g}\|_{BL}\sum_{i=0}^{\infty}d_{FM}\left(P^i\delta_x,P^i\delta_y\right)\\
&\leq \|\bar{g}\|_{BL}\sum_{i=0}^{\infty}\left(d_{FM}\left(P^i\delta_x,\mu_*\right)+d_{FM}\left(P^i\delta_y,\mu_*\right)\right)\\
&\leq \frac{{2\hat{c}\|\bar{g}\|_{BL}}}{1-{q}}\left(1+\varrho(x,\bar{x})+\varrho(y,\bar{x})\right)\quad \text{for any}\quad x,y\in X,
\end{aligned}
\end{align}
where the last inequality follows from \eqref{new_estim}. 

Now, introduce 
\begin{equation}\label{def:Mn}
M_0(\bar{g}):=0,\;\;\;
M_n(\bar{g}):=\chi(\bar{g})(\phi_n)-\chi(\bar{g})(\phi_0)+\sum_{i=0}^{n-1}\bar{g}(\phi_i)\;\;\;\mbox{for}\;\;\;n\in\mathbb{N},
\end{equation}
and observe that $(M_n(\bar{g}))_{n\in\mathbb{N}_0}$
is a martingale with respect to the natural filtration of $(\phi_n)_{n\in\mathbb{N}_0}$ (for the proof see e.g. \cite[Lemma 3]{hhsw2}). Furthermore, we define 
\begin{gather}
\label{def:Zn}
Z_n(\bar{g}):=M_n(\bar{g})-M_{n-1}(\bar{g})=\chi(\bar{g})(\phi_n)-\chi(\bar{g})(\phi_{n-1})+\bar{g}(\phi_{n-1}), \;\;\;n\in\mathbb{N},\\
\label{def:sigmaLIL}
\sigma^2(\bar{g}):=\mathbb{E}_{\mu_*}\left(Z_1^2(\bar{g})\right),\\
\label{def:h_n}
h_n^2(\mu)(\bar{g}):=\mathbb{E}_{\mu}\left(M_n^2(\bar{g})\right),\;\;\; n\in\mathbb{N}_0.
\end{gather}
One can easily check that $Z_n(\bar{g})=Z_1(\bar{g})\circ T^{n-1}$ for any $n\in\mathbb{N}$. 

Let us now define $\mathcal{C}$ as a Banach space of all real-valued continuous functions on $[0,1]$ with the supremum norm. By $\mathcal{K}$ we will denote the subspace of $\mathcal{C}$ consisting of all absolutely continuous functions~$f$ such that $f(0)=0$ and $\int_0^1\left(f'(t)\right)^2dt\leq 1$. Further, consider the sequence of random variables $(r_n(\bar{g}))_{n\in\mathbb{N}_0}$ with values in~$\mathcal{C}$, determined by

\begin{align}\label{def:r_n}
\begin{aligned}
&r_n(\bar{g})(t):=
\frac{\sum_{i=0}^{k-1}\bar{g}(\phi_i)+(nt-k)\bar{g}(\phi_{k})}{\sigma(\bar{g})\sqrt{2n\ln\ln n}}
\;\;\;\mbox{for}\;\;\;n>e,\;t\in(0,1]\\
&\hspace{1.83cm}\text{and}\;k\in\{1,\ldots,n-1\}\;\;\text{such that}\;\;k\leq nt\leq k+1,\\
&r_n(\bar{g})(t):=0\;\;\;\mbox{for}\;\;\; n\leq e \;\;\;\mbox{or} \;\;\;t=0.
\end{aligned}
\end{align}

For any~given function $g\in Lip_b(X)$, we say that  the Markov chain $\left(g(\phi_n)\right)_{n\in\mathbb{N}_0}$ satisfies the invariance principle for the LIL if $0<\sigma^2(\bar{g})<\infty$, the family \hbox{$\{r_n(\bar{g}):\,{n\in\mathbb{N}_0}\}$} is relatively compact in $\mathcal{C}$, and the set of its limit points coincides with~$\mathcal{K}$ $\mathbb{P}_{\mu}$-a.s. Observe that, whenever the chain $\left({g}(\phi_n)\right)_{n\in\mathbb{N}_0}$ satisfies the invariance principle for the LIL, it also obeys the LIL itself. Indeed, if $0<\sigma^2(\bar{g})<\infty$, then for any $n>e$ we can define
\begin{equation*}
\hat{r}_n(\bar{g}):=r_n(\bar{g})(1)=\frac{\sum_{i=1}^{n}\bar{g}(\phi_{i})}{\sigma(\bar{g})\sqrt{2n\ln\ln n}},
\end{equation*}
which, due to the definition of $\mathcal{K}$,  satisfies
\begin{equation*}
\limsup\limits_{n\to\infty}\hat{r}_n\left(\bar{g}\right)=1
\;\;\;
\mbox{and}\;\;\;
\liminf\limits_{n\to\infty}\hat{r}_n\left(\bar{g}\right)=-1\;\;\;\mathbb{P}_{\mu}\mbox{-a.s}.
\end{equation*}

Our aim now is to establish the main result of this paper. It shall be  formulated in the same spirit as Theorem \ref{thm:spectral_gap} and \cite[Theorem 3.2]{clt_chw} (see \cite{dawid2, dawid,clt_chw, asia} for possible applications of these theorems). While hypotheses \hyperref[cnd:B0]{(B0)}-\hyperref[cnd:B4]{(B5)} are sufficient for the Markov operator $P$ to be exponentially ergodic in $d_{FM}$, our proof of the Strassen invariance principle for the LIL will additionally require the following condition:
\begin{itemize}
\item[(B1$^*$)]\phantomsection \label{cnd:B1p} There exist $a^*\in(0,1)$ and $b^{*}\in(0,\infty)$ such that, for any $\nu\in \mathcal{M}_{1,2+r}^{\varrho(\cdot,\bar{x})}(X)$,
\begin{equation*}
\left\langle \varrho^{2+r}(\cdot,\bar{x}),\,P\nu\right\rangle^{1/(2+r)}
\leq a^{*}\left\langle \varrho^{2+r}(\cdot,\bar{x}),\,\nu\right\rangle^{1/(2+r)}+b^{*}.
\end{equation*}
\end{itemize}
\begin{remark}
Let us compare condition \hyperref[cnd:B1p]{(B1$^{*}$)} with (B1$^{\prime}$), which has been employed in~\cite{clt_chw} to establish the CLT for a subclass of Markov-Feller chains described in Section \ref{sec:SGP}. The latter guarantees the existence of $a\in(0,1)$ and $b\in (0,\infty)$ such that
\begin{equation*}
U \varrho^2(x,\bar{x})\leq (a\,\varrho(x,\bar{x})+b)^2\quad\text{for any}\quad x\in X.
\end{equation*}
One can observe that (B1$^{\prime}$) is a stronger version of \hyperref[cnd:B1]{(B1)}. Condition \hyperref[cnd:B1p]{(B1$^{*}$)} is of the same type, although, in general, it does not need to imply \hyperref[cnd:B1]{(B1)}. Consequently, in Theorem \ref{thm:LIL} we assume both \hyperref[cnd:B1]{(B1)} and~\hyperref[cnd:B1p]{(B1$^{*}$)}. 
\end{remark}

\begin{theorem}\label{thm:LIL}
Suppose that $(\phi_n)_{n\in\mathbb{N}_0}$ is an $X$-valued time-homogeneous Markov chain with transition law $\Pi$ and initial distribution $\mu$ such that $\mu\in\mathcal{M}_{1,2+r}^{\varrho(\cdot,\bar{x})}(X)$ for some  $r\in(0,2)$. Let $P$ denote the Markov operator corresponding to $\Pi$. Further, assume that there exists a~substochastic kernel \hbox{$Q:X^2\times\mathcal{B}_{X^2}\to[0,1]$} satisfying (\ref{def:substoch}), such that conditions \hyperref[cnd:B0]{(B0)}-\hyperref[cnd:B4]{(B5)} and \hyperref[cnd:B1p]{(B1$^{*}$)} hold for $P$ and $Q$ with some $F\subset X^2$. Then, for any $g\in Lip_b(X)$, the constant $\sigma^2(\bar{g})$, defined by \eqref{def:sigmaLIL}, can be written as
\begin{equation}
\label{dod2}
\sigma^2(\bar{g})=\<\chi^2(\bar{g})-\left(U\chi(\bar{g})\right)^2,\mu_*\>,
\end{equation}
where $\mu_*$ stands for the unique invariant measure of $P$, and the chain $\left({g}(\phi_n)\right)_{n\in\mathbb{N}_0}$ obeys the Strassen invariance principle for the LIL, whenever $\sigma^2(\bar{g})>0$. 

\end{theorem}

Before we prove Theorem \ref{thm:LIL}, we first need to state several auxiliary facts.  
Lemmas~\ref{lem:1_}-\ref{lem:2}, established below, concern certain properties of $(Z_n(\bar{g}))_{n\in\mathbb{N}_0}$, given by \eqref{def:Zn}, while Lemma \ref{lem:3} indicates mutual relations between $\sigma^2(\bar{g})$ and $h_n^2(\mu)(\bar{g})$, given by \eqref{def:sigmaLIL} and \eqref{def:h_n}, respectively. Finally, Lemmas \ref{lem:5} and \ref{lem:6} allow us to assure a form of the functional LIL for the sequence $(Z_n(\bar{g}))_{n\in\mathbb{N}}$ of martingale increments (cf. \cite[Theorem 1]{hs}).

Let $(\phi^{(1)}, \phi^{(2)})_{n\in\mathbb{N}_0}$ be a coupling of $\Pi$ such that condition \hyperref[cnd:B4]{(B5)} holds.
\begin{lemma}\label{lem:1_}
Under the assumptions of Theorem \ref{thm:LIL}, we have
\begin{equation*}
\sum_{n=1}^{\infty}\mathbb{E}_{x,y}\left|Z_n^{(1)}(\bar{g})-Z_n^{(2)}(\bar{g})\right|<\infty \;\;\;\mbox{for}\;\;\;x,y\in X,
\end{equation*}
where  $Z_n^{(1)}$ and $Z_n^{(2)}$ are defined according to the rule given in \eqref{def:copies}, applied to the above-specified coupling $(\phi^{(1)}, \phi^{(2)})_{n\in\mathbb{N}_0}$.
\end{lemma}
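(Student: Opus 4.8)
The plan is to expand $Z_n^{(1)}(\bar g)-Z_n^{(2)}(\bar g)$ according to \eqref{def:Zn}, reduce the estimate to differences of the form $\chi(\bar g)(\phi_k^{(1)})-\chi(\bar g)(\phi_k^{(2)})$ and $\bar g(\phi_k^{(1)})-\bar g(\phi_k^{(2)})$, control each of these by a geometrically decaying bound coming from Lemma \ref{cor:g_useful}, and sum the resulting geometric series in $n$.

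First I would note that, since $g\in Lip_b(X)$, the centred function $\bar g=g-\langle g,\mu_*\rangle$ again belongs to $Lip_b(X)$ (subtracting a constant changes neither the Lipschitz constant nor boundedness), so $\|\bar g\|_{BL}<\infty$. Applying the rule \eqref{def:copies} to the real-valued $Z_n(\bar g)$ together with \eqref{def:Zn} gives
\[
Z_n^{(i)}(\bar g)=\chi(\bar g)\big(\phi_n^{(i)}\big)-\chi(\bar g)\big(\phi_{n-1}^{(i)}\big)+\bar g\big(\phi_{n-1}^{(i)}\big),\qquad i\in\{1,2\},
\]
which are well defined since $\chi(\bar g)$ is finite at every point by \eqref{eq:integrable}. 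The triangle inequality then yields
\[
\mathbb{E}_{x,y}\big|Z_n^{(1)}(\bar g)-Z_n^{(2)}(\bar g)\big|\le A_n+A_{n-1}+B_{n-1},
\]
where $A_k:=\mathbb{E}_{x,y}\big|\chi(\bar g)(\phi_k^{(1)})-\chi(\bar g)(\phi_k^{(2)})\big|$ and $B_k:=\mathbb{E}_{x,y}\big|\bar g(\phi_k^{(1)})-\bar g(\phi_k^{(2)})\big|$. The term $B_k$ is handled directly by Lemma \ref{cor:g_useful}, giving $B_k\le c\|\bar g\|_{BL}q^{k}(1+V(x)+V(y))$.

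The core of the argument is the bound on $A_k$. Writing $\chi(\bar g)=\sum_{i=0}^{\infty}U^i\bar g$ as in \eqref{def:chi} and letting $\mathcal{G}_k$ denote the $\sigma$-algebra generated by $(\phi_0^{(1)},\phi_0^{(2)}),\ldots,(\phi_k^{(1)},\phi_k^{(2)})$, I would establish the identity
\[
U^i\bar g\big(\phi_k^{(j)}\big)=\mathbb{E}_{x,y}\Big(\bar g\big(\phi_{k+i}^{(j)}\big)\,\Big|\,\mathcal{G}_k\Big),\qquad j\in\{1,2\},\ i\in\mathbb{N}_0.
\]
This follows by induction on $i$ from the one-step coupling relations $C(u,v,\cdot\times X)=\Pi(u,\cdot)$ and $C(u,v,X\times\cdot)=\Pi(v,\cdot)$ combined with the tower property: the conditional law of the $j$-th coordinate given the whole two-dimensional history depends only on $\phi_k^{(j)}$, so iterating the marginal transition produces exactly $U^i\bar g(\phi_k^{(j)})$. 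Taking the difference over $j$, applying conditional Jensen to move the absolute value inside $\mathbb{E}_{x,y}(\cdot\mid\mathcal{G}_k)$, interchanging the infinite sum with $\mathbb{E}_{x,y}$ by Tonelli (all terms are nonnegative), and finally invoking Lemma \ref{cor:g_useful} at time $k+i$, I obtain
\[
A_k\le\sum_{i=0}^{\infty}\mathbb{E}_{x,y}\big|\bar g(\phi_{k+i}^{(1)})-\bar g(\phi_{k+i}^{(2)})\big|
\le c\|\bar g\|_{BL}(1+V(x)+V(y))\sum_{i=0}^{\infty}q^{k+i}
=\frac{c\|\bar g\|_{BL}\,q^{k}}{1-q}\big(1+V(x)+V(y)\big).
\]

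Combining the three estimates, $\mathbb{E}_{x,y}\big|Z_n^{(1)}(\bar g)-Z_n^{(2)}(\bar g)\big|$ is dominated by a constant multiple of $q^{n-1}(1+V(x)+V(y))$, and summing this convergent geometric series over $n\ge1$ yields the claimed finiteness for every $x,y\in X$. I expect the main obstacle to be the conditional-expectation identity for $U^i\bar g(\phi_k^{(j)})$: the delicate point is justifying the passage from the marginal Markov property of each coordinate to a statement about conditioning on the \emph{full} two-dimensional filtration $\mathcal{G}_k$, which is precisely where the coupling marginal conditions \eqref{def:substoch} enter. Everything else reduces to routine applications of Lemma \ref{cor:g_useful} and summation of geometric series.
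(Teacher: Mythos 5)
Your proof is correct and follows essentially the same route as the paper: decompose $Z_n^{(1)}(\bar g)-Z_n^{(2)}(\bar g)$ via \eqref{def:Zn}, reduce the $\chi(\bar g)$-differences to $\sum_{i\ge n}\mathbb{E}_{x,y}\big|g\big(\phi_i^{(1)}\big)-g\big(\phi_i^{(2)}\big)\big|$ using the fact that each coordinate of the coupling evolves marginally according to $\Pi$, then apply Lemma \ref{cor:g_useful} and sum geometric series. The paper performs this reduction by integrating against the $n$-step coupling kernels $\mathbb{C}^n_{x,y}:=C^n(x,y,\cdot)$ and invoking the Chapman--Kolmogorov identity $\int_{X^2}\mathbb{C}^i_{u_1,u_2}(\cdot)\,\mathbb{C}^n_{x,y}(du_1\times du_2)=\mathbb{C}^{n+i}_{x,y}(\cdot)$; your conditional-expectation identity $U^i\bar g\big(\phi_k^{(j)}\big)=\mathbb{E}_{x,y}\big(\bar g\big(\phi_{k+i}^{(j)}\big)\mid\mathcal{G}_k\big)$ is exactly the same step in filtration language, so this is a difference of notation, not of substance.

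There is, however, one point where you genuinely diverge from -- and improve on -- the paper's written argument: the handling of $\bar g\big(\phi_{n-1}^{(1)}\big)-\bar g\big(\phi_{n-1}^{(2)}\big)$. The paper's display \eqref{eq:estim_difference} bounds this term by the constant $2\|\bar g\|_{\infty}$, which does not decay in $n$; summed over $n$ this bound diverges, so combining \eqref{eq:estim_difference} with \eqref{eq:estim_chi} does not, as literally written, yield the claimed finiteness. Your estimate $B_{n-1}\le c\|\bar g\|_{BL}\,q^{n-1}\left(1+V(x)+V(y)\right)$, obtained by applying Lemma \ref{cor:g_useful} directly to this term (using that $\bar g(u)-\bar g(v)=g(u)-g(v)$, so $\bar g$ inherits the Lipschitz bound of $g$), is precisely what is needed and is the natural repair of the paper's oversight. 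In short: same strategy, but your version is the one that actually closes the argument.
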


\begin{proof}
First of all, note that
\begin{align}
\begin{split}
\label{eq:estim_difference}
\left|Z_n^{(1)}(\bar{g})-Z_n^{(2)}(\bar{g})\right|
&\leq\left|\chi(\bar{g})\left(\phi_n^{(1)}\right)-\chi(\bar{g})\left(\phi_n^{(2)}\right)\right|
+\left|\chi(\bar{g})\left(\phi_{n-1}^{(1)}\right)-\chi(\bar{g})\left(\phi_{n-1}^{(2)}\right)\right|\\
&\quad+2\|\bar{g}\|_{\infty}.
\end{split}
\end{align}
Further, we can deduce that
\begin{align*}
\mathbb{E}_{x,y}\Big|\chi(\bar{g})\left(\phi_n^{(1)}\right)-&\chi(\bar{g})\left(\phi_n^{(2)}\right)\Big|
\leq \sum_{i=0}^{\infty}\mathbb{E}_{x,y}\left|U^i\bar{g}\left(\phi_n^{(1)}\right)-U^i\bar{g}\left(\phi_n^{(2)}\right)\right|\\
&= \sum_{i=0}^{\infty}\int_{X^2}\left|U^i\bar{g}(u_1)-U^i\bar{g}(u_2)\right|\,C^n(x,y,du_1\times du_2)\\
&= \sum_{i=0}^{\infty}\int_{X^2}\left|\left\langle\bar{g},P^i\delta_{u_1}\right\rangle-\left\langle\bar{g},P^i\delta_{u_2}\right\rangle\right|\,C^n(x,y,du_1\times du_2)\\
&\leq \sum_{i=0}^{\infty}\int_{X^2}\int_{X^2}\left|g(v_1)-g(v_2)\right|C^i(u_1,u_2,dv_1\times dv_2)\,C^n(x,y,du_1\times du_2)\\
&= \sum_{i=0}^{\infty}\int_{X^2}
\left|g(v_1)-g(v_2)\right|C^{n+i}(x,y,dv_1\times dv_2)\\
&=\sum_{i=n}^{\infty}\mathbb{E}_{x,y}\left|g\left(\phi_i^{(1)}\right)-g\left(\phi_i^{(2)}\right)\right|.
\end{align*}
Hence, applying Lemma \ref{cor:g_useful}, we infer that there exist $q\in(0,1)$ and $c<\infty$ such that 
\begin{align}\label{eq:estim_chi}
\begin{aligned}
\mathbb{E}_{x,y}\left|\chi(\bar{g})\left(\phi_n^{(1)}\right)-\chi(\bar{g})\left(\phi_n^{(2)}\right)\right|
&\leq
c\|\bar{g}\|_{BL}\left(1+\varrho(x,\bar{x})+\varrho(y,\bar{x})\right)\sum_{i=n}^{\infty} q^i\\
&\leq c\|\bar{g}\|_{BL}\,q^n(1-q)^{-1}\left(1+\varrho(x,\bar{x})+\varrho(y,\bar{x})\right)
\end{aligned}
\end{align}
for every $n\in\mathbb{N}$.  
Combining \eqref{eq:estim_difference} with \eqref{eq:estim_chi}, finally gives  
\begin{equation*}
\sum_{n=1}^{\infty}\mathbb{E}_{x,y}\left|Z_n^{(1)}(\bar{g})-Z_n^{(2)}(\bar{g})\right|<\infty,
\end{equation*}
which completes the proof.
\end{proof}

\begin{lemma}\label{lem:2_}
Under the assumptions of Theorem \ref{thm:LIL}, we have 
\begin{equation*}
\sup_{n\in\mathbb{N}}\mathbb{E}_{\mu}\left|Z_n(\bar{g})\right|^{2+r}<\infty \quad \text{for any}\quad g\in Lip_b(X).
\end{equation*}
\end{lemma}
\begin{proof}
Let $g\in Lip_b(X)$ and $n\in\mathbb{N}$. One can easily check that, for every $\kappa>0$, there exists $p\in(2,\infty)$ such that 
\begin{equation}\label{eq:abbrev_formula}
(s+t)^{2+\kappa}\leq p\left(s^{2+\kappa}+t^{2+\kappa}\right)\;\;\;\mbox{for any}\;\;\;s,t\geq 0.
\end{equation}
Hence, due to the definition of $Z_n(\bar{g})$,  
we obtain
\begin{align*}
\mathbb{E}_{\mu}\left|Z_n(\bar{g})\right|^{2+r}
&\leq\, p\int_X \left|\chi(\bar{g})(u)\right|^{2+r}\,P^{n}\mu(du)
+p^2\int_X \left|\chi(\bar{g})(u)\right|^{2+r}\,P^{n-1}\mu(du)\\
&\quad+p^2\int_X\left|\bar{g}(u)\right|^{2+r}\,P^{n-1}\mu(du),
\end{align*}
where the last term can be majorized by $p^2\|\bar{g}\|_{\infty}^{2+r}$.  
Then, according to \eqref{prop_chi}, there exist $q\in(0,1)$ and $\hat{c}>0$ such that, for all $n\in\mathbb{N}$,
\begin{align*}
\int_X \left|\chi(\bar{g})(u)\right|^{2+r}P^n\mu(du)
&=\int_X\left|\chi(\bar{g})(u)-\chi(\bar{g})(\bar{x})+\chi(\bar{g})(\bar{x})\right|^{2+r}P^n\mu(du)\\
&\leq p\left|\chi(\bar{g})(\bar{x})\right|^{2+r}+p\int_X\left|\chi(\bar{g})(u)-\chi(\bar{g})(\bar{x})\right|^{2+r}P^n\mu(du)\\
&\leq p\left|\chi(\bar{g})(\bar{x})\right|^{2+r}+p^2\left(\frac{2\hat{c}\|\bar{g}\|_{BL}}{1-q}\right)^{2+r}\left(1+\left\langle \varrho^{2+r}(\cdot,\bar{x}),P^n\mu\right\rangle\right). 
\end{align*} 
Further, from \hyperref[cnd:B1p]{(B1$^{*}$)}, it follows that
\begin{align}\label{estim_}
\begin{aligned}
\left\langle \varrho^{2+r}(\cdot,\bar{x}),P^n\mu\right\rangle^{1/(2+r)}
&\leq a^{*}\left\langle \varrho^{2+r}(\cdot,\bar{x}),P^{n-1}\mu\right\rangle^{1/(2+r)}+b^{*}\\
&\leq\ldots
\leq\left(a^{*}\right)^n\left\langle\varrho^{2+r}(\cdot,\bar{x}),\mu\right\rangle^{1/(2+r)}+\frac{b^{*}}{1-a^{*}},
\end{aligned}
\end{align}
which gives
\begin{equation*}
\left\langle \varrho^{2+r}(\cdot,\bar{x}),P^n\mu\right\rangle\leq\left(\left\langle\varrho^{2+r}(\cdot,\bar{x}),\mu\right\rangle^{1/(2+r)}+\frac{b^{*}}{1-a^{*}}\right)^{2+r}\;\;\;\mbox{for all}\;\;\;n\in\mathbb{N}.
\end{equation*}
Finally, recalling that $\mu\in\mathcal{M}_{1,2+r}^{\varrho(\cdot,\bar{x})}(X)$, we obtain 
\begin{equation}\label{eq:sup_nE_x}
\sup_{n\in\mathbb{N}}\mathbb{E}_{{\mu}}\left|Z_n(\bar{g})\right|^{2+r}< p\bar{c}+p^2\bar{c}+p^2\|\bar{g}\|_{\infty}^{2+r}
\end{equation}
with
\begin{equation*}
\bar{c}=p\left|\chi(\bar{g})(\bar{x})\right|^{2+r}+p^2\left(\frac{2\hat{c}\|\bar{g}\|_{BL}}{1-q}\right)^{2+r}
\left(1+\left(\left\langle\varrho^{2+r}(\cdot,\bar{x}),\mu\right\rangle^{1/(2+r)}+\frac{b^{*}}{1-a^{*}}\right)^{2+r}\right).
\end{equation*}
The proof of Lemma \ref{lem:2_} is therefore completed.
\end{proof}

\begin{lemma}\label{lem:2}
Under the assumptions of Theorem \ref{thm:LIL}, we have 
\begin{equation*}
\sigma^2(\bar{g})=\mathbb{E}_{\mu_*}Z_1^2(\bar{g})<\infty \quad \text{for any}\quad g\in Lip_b(X).
\end{equation*}
\end{lemma}

\begin{proof}
For every $k\in\mathbb{N}$, we define $\tilde{V}_k:X\to[0,k]$ by $\tilde{V}_k(x)=\min\{k,\varrho^{2+r}(x,\bar{x})\}$ for $x\in X$. 
Note that $\tilde{V}_k\in C_b(X)$ for all $k\in\mathbb{N}$. Hence, keeping in mind that $\mu\in\mathcal{M}_{1,2+r}^{\varrho(\cdot,\bar{x})}(X)$ and that $P^n \mu$ converges weakly to~$\mu_*$, as $n\to\infty$, we have
\begin{equation}\label{eq:thm5.1_}
\langle \tilde{V}_k,\mu_*\rangle=\lim_{n\to\infty}\langle \tilde{V}_k,P^n\mu\rangle\quad\mbox{for every}\quad k\in\mathbb{N}.
\end{equation}
Observe that $(\tilde{V}_k)_{k\in\mathbb{N}}$ is a non-increasing sequence of non-negative functions satisfying \linebreak\hbox{$\lim_{k\to\infty}\tilde{V}_k(x)=\varrho^{2+r}(x,\bar{x})$} for any $x\in X$. Therefore, using the Monotone Convergence Theorem, together with (\ref{eq:thm5.1_}) and (\ref{estim_}),  we obtain
\begin{align*}
\left\langle \varrho^{2+r}(\cdot,\bar{x}),\mu_*\right\rangle
=\lim_{k\to\infty}\left\langle \tilde{V}_k,\mu_*\right\rangle
&=\lim_{k\to\infty}\lim_{n\to\infty}\left\langle \tilde{V}_k,P^n\mu\right\rangle\\
&\leq\limsup\limits_{n\to\infty}\left\langle \varrho^{2+r}(\cdot,\bar{x}),P^n\mu\right\rangle
\leq\left(\frac{b^{*}}{1-a^{*}}\right)^{2+r},
\end{align*}
which implies that $\mu_*\in \mathcal{M}_{1,2+r}^{\varrho(\cdot,\bar{x})}(X)$.

Hence, according to Lemma \ref{lem:2_} and the H\"older inequality, we in particular obtain 
$\mathbb{E}_{\mu_*}Z_1^2(\bar{g})<\infty$, 
which completes the proof.
\end{proof}

\begin{lemma}\label{lem:3}
Under the assumptions of Theorem \ref{thm:LIL}, we have 
\begin{equation*}
\lim_{n\to\infty}\frac{h^2_n(\mu)(\bar{g})}{n}=\sigma^2(\bar{g}),
\end{equation*}
where $\sigma(\bar{g})$ and $h_n(\mu)(\bar{g})$ are defined by \eqref{def:sigmaLIL} and \eqref{def:h_n}, respectively.
\end{lemma}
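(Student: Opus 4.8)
The plan is to recognize Lemma \ref{lem:3} as a direct specialization of the abstract martingale result of Lemma \ref{lem:dawida}, applied to the concrete martingale $(M_n(\bar{g}))_{n\in\mathbb{N}_0}$ with increments $Z_n(\bar{g})$. Concretely, I would fix $g\in Lip_b(X)$ and identify, in the framework of Subsection \ref{subsec}, $m_n=M_n(\bar{g})$, $z_n=Z_n(\bar{g})$, $\sigma^2=\sigma^2(\bar{g})$ and $h_n^2(\mu)=h_n^2(\mu)(\bar{g})$. The first task is to check that this identification is legitimate, i.e.\ that $(M_n(\bar{g}))_{n\in\mathbb{N}_0}$ really meets the standing hypotheses of that subsection: that it is a martingale with respect to the natural filtration of $(\phi_n)_{n\in\mathbb{N}_0}$ with $M_0(\bar{g})=0$, which is recorded just after \eqref{def:Mn}, and that its increments are stationary, $Z_n(\bar{g})=Z_1(\bar{g})\circ T^{n-1}$, as observed right after \eqref{def:h_n}. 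The weak ergodicity $P^n\nu\to\mu_*$ required by the subsection is provided by Theorem \ref{thm:spectral_gap} under (B0)--(B5).

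Next I would verify the two hypotheses of Lemma \ref{lem:dawida} for this choice, both of which have in fact already been proved. Condition \eqref{cond:a}, namely $\sum_n \mathbb{E}_{x,y}|z_n^{(1)}-z_n^{(2)}|<\infty$ for all $x,y\in X$, is exactly the content of Lemma \ref{lem:1_} with $z_n=Z_n(\bar{g})$, using the coupling supplied by (B5). Condition \eqref{cond:b}, namely $\sup_n \mathbb{E}_{\mu}|z_n|^{2+r}<\infty$ for the exponent $r\in(0,2)$ furnished by (B1$^*$) and the initial law $\mu\in\mathcal{M}_{1,2+r}^{\varrho(\cdot,\bar{x})}(X)$, is precisely Lemma \ref{lem:2_}. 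Finiteness of $\sigma^2(\bar{g})=\mathbb{E}_{\mu_*}(Z_1^2(\bar{g}))$, the finiteness half of the subsection's standing assumption on $\sigma^2$, is furnished by Lemma \ref{lem:2}.

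With all the hypotheses in place, I would simply invoke conclusion \eqref{eq:h_n_sigma} of Lemma \ref{lem:dawida}, which states $\lim_n h_n^2(\mu)/n=\sigma^2$; transcribed into the present notation this is exactly $\lim_n h_n^2(\mu)(\bar{g})/n=\sigma^2(\bar{g})$, as claimed. Since the only assertion needed here is \eqref{eq:h_n_sigma}, positivity of $\sigma^2(\bar{g})$ plays no role, so the statement is valid for every $g\in Lip_b(X)$, including the degenerate case of constant $g$, for which $\bar{g}\equiv 0$ and both sides vanish.

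I expect no genuine analytic obstacle in this lemma: essentially all of the work has already been absorbed into Lemmas \ref{lem:1_}, \ref{lem:2_} and \ref{lem:2}, and into the general Lemma \ref{lem:dawida}. The only point demanding a little care is the bookkeeping, that is, making each standing assumption of Subsection \ref{subsec} (the martingale structure, stationarity of the increments, finiteness of $\sigma^2(\bar{g})$, and the weak ergodicity of $P$) explicitly correspond to an object built from $\bar{g}$, so that Lemma \ref{lem:dawida} can be quoted verbatim. Once that correspondence is spelled out, \eqref{eq:h_n_sigma} delivers the conclusion immediately.
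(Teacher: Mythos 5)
Your proposal is correct and follows essentially the same route as the paper: the paper's own proof simply invokes Lemma \ref{lem:dawida}, noting that conditions \eqref{cond:a} and \eqref{cond:b} are supplied by Lemmas \ref{lem:1_} and \ref{lem:2_}, respectively. Your additional bookkeeping (stationarity of increments, finiteness of $\sigma^2(\bar{g})$ via Lemma \ref{lem:2}, and the remark that positivity of $\sigma^2(\bar{g})$ is not needed here) only makes explicit what the paper leaves implicit.
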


\begin{proof}
The assertion follows from Lemma \ref{lem:dawida}. Note that conditions \eqref{cond:a} and \eqref{cond:b} are provided by Lemmas \ref{lem:1_} and \ref{lem:2_}, respectively.
\end{proof}

\begin{lemma}\label{lem:5}
Under the assumptions of Theorem \ref{thm:LIL}, $(Z_n(\bar{g}))_{n\in\mathbb{N}}$ and $(h_n(\mu)(\bar{g}))_{n\in\mathbb{N}}$, given by \eqref{def:Zn} and \eqref{def:h_n}, respectively, are related with each other in the  following way:
\begin{equation}\label{cond:3}
\lim_{n\to\infty}\frac{1}{h_n^2(\mu)(\bar{g})}\sum_{l=1}^nZ_l^2(\bar{g})=1\;\;\;\mathbb{P}_{\mu}\mbox{-a.s.}
\end{equation}
\end{lemma}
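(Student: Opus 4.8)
The plan is to recognize \eqref{cond:3} as a direct specialization of the abstract assertion \eqref{eq:h_n_1} from Lemma \ref{lem:dawida}, applied to the martingale difference sequence $z_n := Z_n(\bar{g})$. Indeed, the framework of Subsection \ref{subsec} requires a martingale $(m_n)$ with increments $z_n = m_n - m_{n-1}$ satisfying $z_n = z_1 \circ T^{n-1}$; taking $m_n = M_n(\bar{g})$ from \eqref{def:Mn}, the sequence $(Z_n(\bar{g}))$ meets this requirement, since $(M_n(\bar{g}))_{n\in\mathbb{N}_0}$ is a martingale with respect to the natural filtration and $Z_n(\bar{g}) = Z_1(\bar{g})\circ T^{n-1}$, as already noted after \eqref{def:sigmaLIL}. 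Moreover, $\sigma^2(\bar{g}) = \mathbb{E}_{\mu_*}(Z_1^2(\bar{g}))$ is finite by Lemma \ref{lem:2}, and, for non-constant $g$ (the case relevant to Theorem \ref{thm:LIL}), it is strictly positive, so the standing condition \eqref{def:sigma} is in force with $\sigma^2 = \sigma^2(\bar{g})$ and $h_n^2(\mu) = h_n^2(\mu)(\bar{g})$.

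It then remains to check the two hypotheses of Lemma \ref{lem:dawida}. The summability condition \eqref{cond:a}, namely $\sum_{n=1}^{\infty}\mathbb{E}_{x,y}|Z_n^{(1)}(\bar{g}) - Z_n^{(2)}(\bar{g})| < \infty$ for all $x,y \in X$, is precisely the content of Lemma \ref{lem:1_}, which was established for the coupling determined by \hyperref[cnd:B5]{(B5)}. The moment condition \eqref{cond:b}, that is $\sup_{n\in\mathbb{N}}\mathbb{E}_{\mu}|Z_n(\bar{g})|^{2+r} < \infty$ for the $r\in(0,2)$ of \hyperref[cnd:B1p]{(B1$^{*}$)}, is supplied by Lemma \ref{lem:2_}, using $\mu \in \mathcal{M}_{1,2+r}^{\varrho(\cdot,\bar{x})}(X)$. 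With both hypotheses verified, Lemma \ref{lem:dawida} yields \eqref{eq:h_n_1} for the sequence $(Z_n(\bar{g}))$, which is exactly \eqref{cond:3}, and hence the proof would be complete. This is entirely parallel to the proof of Lemma \ref{lem:3}, where the same two lemmas furnish the hypotheses of Lemma \ref{lem:dawida}.

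Since all of the analytic work has already been carried out in Lemmas \ref{lem:1_}, \ref{lem:2_}, \ref{lem:2}, and \ref{lem:dawida}, I do not anticipate any genuine obstacle here. The only point demanding minor care is the bookkeeping needed to confirm that the specialization $z_n = Z_n(\bar{g})$ really does satisfy all the standing assumptions of Subsection \ref{subsec} --- in particular the finiteness and positivity of $\sigma^2(\bar{g})$ guaranteeing $h_n^2(\mu)(\bar{g}) > 0$ for all large $n$, so that the ratio in \eqref{cond:3} is eventually well defined.
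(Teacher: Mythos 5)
Your proposal is correct and follows exactly the paper's own route: the paper likewise proves Lemma \ref{lem:5} by noting that Lemmas \ref{lem:1_} and \ref{lem:2_} supply hypotheses \eqref{cond:a} and \eqref{cond:b} of Lemma \ref{lem:dawida}, whose conclusion \eqref{eq:h_n_1} is precisely \eqref{cond:3}. Your additional bookkeeping (finiteness and positivity of $\sigma^2(\bar{g})$, the shift structure $Z_n(\bar{g})=Z_1(\bar{g})\circ T^{n-1}$) is a sound and slightly more careful elaboration of what the paper leaves implicit.
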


\begin{proof}
Lemmas \ref{lem:1_} and \ref{lem:2_} guarantee that $(Z_n(\bar{g}))_{n\in\mathbb{N}}$ satisfies the assumptions of Lemma \ref{lem:dawida}, which in turn  implies the assertion of this lemma.
\end{proof}

\begin{lemma}\label{lem:6}
Under the assumptions of Theorem \ref{thm:LIL}, we have
\begin{align}
\label{cond:1}
&\sum_{n=1}^{\infty}
h_n^{-4}(\mu)(\bar{g})\mathbb{E}\left(Z_n^4(\bar{g})
\mathbbm{1}_{\left\{|Z_n(\bar{g})|<\,\upsilon \,h_n(\mu)(\bar{g})\right\}}\right)
< \infty\;\;\;\mbox{for every}\;\;\;\upsilon>0,\\
\label{cond:2}
&\sum_{n=1}^{\infty}
h_n^{-1}(\mu)(\bar{g})\mathbb{E}\left(|Z_n(\bar{g})|
\mathbbm{1}_{\left\{|Z_n(\bar{g})|\geq\,\vartheta \,h_n(\mu)(\bar{g})\right\}}\right)
<\infty\;\;\;\mbox{for every}\;\;\;\vartheta>0.
\end{align}
\end{lemma}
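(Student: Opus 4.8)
The plan is to recognise Lemma~\ref{lem:6} as simply the instantiation of the abstract Lemma~\ref{lem:ogolny} at the concrete martingale-difference sequence $(Z_n(\bar{g}))_{n\in\mathbb{N}}$, with the martingale $(M_n(\bar{g}))_{n\in\mathbb{N}_0}$ and the scaling sequence $h_n(\mu)(\bar{g})$ taking over the roles of $(m_n)$, $(z_n)$ and $h_n(\mu)$ from Section~\ref{subsec}. Since $(M_n(\bar{g}))_{n\in\mathbb{N}_0}$ is a martingale with respect to the natural filtration of $(\phi_n)_{n\in\mathbb{N}_0}$ and satisfies $Z_n(\bar{g})=Z_1(\bar{g})\circ T^{n-1}$, it fits exactly the structural framework under which Lemma~\ref{lem:ogolny} was stated. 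Thus, in the spirit of the proofs of Lemmas~\ref{lem:3} and~\ref{lem:5}, the whole task reduces to checking the two hypotheses \eqref{cond:a} and \eqref{cond:b} for this sequence and then quoting Lemma~\ref{lem:ogolny}.

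First I would note that hypothesis \eqref{cond:a}, namely $\sum_{n=1}^{\infty}\mathbb{E}_{x,y}|Z_n^{(1)}(\bar{g})-Z_n^{(2)}(\bar{g})|<\infty$ for all $x,y\in X$, is exactly the assertion of Lemma~\ref{lem:1_}. Next, since the initial law $\mu$ of Theorem~\ref{thm:LIL} belongs to $\mathcal{M}_{1,2+r}^{\varrho(\cdot,\bar{x})}(X)$ for the same $r\in(0,2)$ furnished by \hyperref[cnd:B1p]{(B1$^{*}$)}, Lemma~\ref{lem:2_} yields $\sup_{n\in\mathbb{N}}\mathbb{E}_{\mu}|Z_n(\bar{g})|^{2+r}<\infty$, which is precisely hypothesis \eqref{cond:b}. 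With both hypotheses verified, Lemma~\ref{lem:ogolny} supplies an index $N\in\mathbb{N}$ with $h_n(\mu)(\bar{g})>0$ for all $n\geq N$ such that the two series
\[
\sum_{n=N}^{\infty}h_n^{-4}(\mu)(\bar{g})\,\mathbb{E}\left(Z_n^4(\bar{g})\mathbbm{1}_{\{|Z_n(\bar{g})|<\upsilon h_n(\mu)(\bar{g})\}}\right)
\quad\text{and}\quad
\sum_{n=N}^{\infty}h_n^{-1}(\mu)(\bar{g})\,\mathbb{E}\left(|Z_n(\bar{g})|\mathbbm{1}_{\{|Z_n(\bar{g})|\geq\vartheta h_n(\mu)(\bar{g})\}}\right)
\]
are finite for every $\upsilon>0$ and every $\vartheta>0$.

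It then remains to reconcile the summation ranges, passing from the tails indexed by $n\geq N$ to the series indexed from $n=1$ asserted in \eqref{cond:1} and \eqref{cond:2}. Here I would use that, by orthogonality of martingale differences, $h_n^2(\mu)(\bar{g})=\sum_{l=1}^{n}\mathbb{E}_{\mu}(Z_l^2(\bar{g}))$ is non-decreasing, so every index $n<N$ for which $h_n(\mu)(\bar{g})=0$ forces $Z_n(\bar{g})=0$ $\mathbb{P}_{\mu}$-a.s.; the corresponding truncation indicators then vanish, and under the usual convention $\infty\cdot 0=0$ the summands at those indices are $0$. Every remaining index $n<N$ with $h_n(\mu)(\bar{g})>0$ contributes a single finite term, finiteness being clear from $\mathbb{E}(Z_n^2(\bar{g}))<\infty$ and $\mathbb{E}(|Z_n(\bar{g})|)<\infty$ (Lemma~\ref{lem:2_} together with H\"older's inequality). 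Since adjoining finitely many finite terms does not affect convergence, the full series from $n=1$ are finite, giving \eqref{cond:1} and \eqref{cond:2}. The only point demanding genuine care is exactly this reconciliation of the ranges, because the factor $h_n^{-4}(\mu)(\bar{g})$ is meaningful only after positivity of $h_n(\mu)(\bar{g})$ has been secured; all the analytic content is delivered verbatim by Lemmas~\ref{lem:1_},~\ref{lem:2_} and~\ref{lem:ogolny}.
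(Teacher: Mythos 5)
Your proof is correct and takes essentially the same approach as the paper: the paper's own (one-line) proof likewise observes that hypotheses \eqref{cond:a} and \eqref{cond:b} are supplied by Lemmas \ref{lem:1_} and \ref{lem:2_}, respectively, and then invokes Lemma \ref{lem:ogolny}. Your extra reconciliation of the summation ranges (passing from $n\geq N$ to $n\geq 1$) addresses a point the paper silently glosses over, and you handle it correctly.
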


\begin{proof}
Having in mind that condition \eqref{cond:b} is provided by Lemma \ref{lem:2_}, we see that the claim follows directly from Lemma \ref{lem:ogolny}.
\end{proof}
We are now in a position to prove the main theorem. As mentioned earlier, an essential step in our proof will be applying \cite[Theorem 1]{hs}.
\begin{proof}[Proof of Theorem \ref{thm:LIL}]
The existence and uniqueness of an invariant probability measure for~$P$, further denoted by $\mu_*$, follows from Theorem \ref{thm:spectral_gap}. The proof proceeds in two steps.

\textbf{Step I. }
First of all, we will show that the sequence $(h_n(\mu)(\bar{g}))_{n\geq{N}}$, where $h_n(\mu)(\bar{g})$ is given by \eqref{def:h_n}, is strictly increasing for some sufficiently large $N\in\mathbb{N}$, which equivalently means that $\mathbb{E}_{\mu}(Z_n^2(\bar{g}))>0$ for any $n\geq N$. Along the way, we will also get \eqref{dod2}. Since \hbox{$Z_n(\bar{g})=Z_1(\bar{g})\circ T^{n-1}$}, we can write
\begin{align}\label{eq:main_proof}
\begin{aligned}
\mathbb{E}_{\mu}\left(Z_n^2(\bar{g})\right)
&=\mathbb{E}_{\mu}\left(\mathbb{E}_{\mu}\left(Z_1^2(\bar{g})\circ T^{n-1}|\mathcal{F}_{n-1}\right)\right)
=\mathbb{E}_{\mu}\left(\mathbb{E}_{\phi_{n-1}}\left(Z_1^2(\bar{g})\right)\right)\\
&=\int_{\Omega}\mathbb{E}_{\phi_{n-1}(\omega)}\left(Z_1^2(\bar{g})\right)\mathbb{P}_{\mu}(d\omega)
=\int_X\mathbb{E}_x\left(Z_1^2(\bar{g})\right)P^{n-1}\mu(dx),
\end{aligned}
\end{align}
where the second equality follows from the Markov property. 
According to \eqref{def:Zn}, we have
\begin{align}\label{eq:E_xZ_1^2}
\begin{aligned}
\mathbb{E}_x\left(Z_1^2(\bar{g})\right)&=\mathbb{E}_x\left(\left(\chi(\bar{g})(\phi_1)-\chi(\bar{g})(\phi_0)+\bar{g}(\phi_0)\right)^2\right)\\
&=U\chi^2(\bar{g})(x)+\chi^2(\bar{g})(x)+\bar{g}^2(x)+2\,\bar{g}(x)\,U\chi(\bar{g})(x)\\
&\quad-2\,\chi(\bar{g})(x)\,U\chi(\bar{g})(x)
-2\,\chi(\bar{g})(x)\,\bar{g}(x)\quad\text{for any}\quad x\in X.
\end{aligned}
\end{align}
Note that $\chi^2(\bar{g})\in\bar{B}_b(X)$, and therefore we can apply to it the extension of the dual operator~$U$, given by \eqref{def:dual_op}. On the other hand, from \eqref{eq:integrable} and \hyperref[cnd:B1]{(B1)} (with \hbox{$V(\cdot)=\varrho(\cdot,\bar{x})$}) it follows that $\chi(\bar{g})$ is integrable with respect to $P\delta_x$ for every $x\in X$, and thus $U\chi(\bar{g})(x)=\langle \chi(\bar{g}), P\delta_x\rangle$ is well-defined for any $x\in X$. 
Further, observe that, for every $x\in X$,
\begin{align*}
U\chi(\bar{g})(x)&=\int_X\sum_{i=0}^{\infty}U^i\bar{g}(y)\,P\delta_x(dy)=\sum_{i=0}^{\infty}\int_XU^{i+1}\bar{g}(y)\,\delta_x(dy)=\sum_{i=1}^{\infty}U^i\bar{g}(x)=\chi(\bar{g})(x)-\bar{g}(x).
\end{align*}
Combining this with \eqref{eq:E_xZ_1^2} gives
\begin{align}\label{eq:}
\begin{aligned}
\mathbb{E}_x\left(Z_1^2(\bar{g})\right)
&=U\chi^2(\bar{g})(x)+\chi^2(\bar{g})(x)+\bar{g}^2(x)+2\,\bar{g}(x)\left(\chi(\bar{g})(x)-\bar{g}(x)\right)\\
&\quad-2\,\chi(\bar{g})(x)\left(\chi(\bar{g})(x)-\bar{g}(x)\right)-2\,\chi(\bar{g})(x)\,\bar{g}(x)\\
&=U\chi^2(\bar{g})(x)-\left(\chi(\bar{g})(x)-\bar{g}(x)\right)^2
=U\chi^2(\bar{g})(x)-\left(U\chi(\bar{g})\right)^2(x).
\end{aligned}
\end{align}
Hence, keeping in mind \eqref{eq:main_proof}, we see that
\begin{equation}
\label{dod}
\mathbb{E}_{\mu}\left(Z_n^2(\bar{g})\right)=\int_X \left[U\chi^2(\bar{g})(x)-\left(U\chi(\bar{g})\right)^2(x)\right]P^{n-1}\mu(dx),
\end{equation}
which, in particular, proves \eqref{dod2}, since
$$\sigma^2(\bar{g})=\mathbb{E}_{\mu_*}\left(Z_1^2(\bar{g})\right)= \<U\chi^2(\bar{g})-\left(U\chi(\bar{g})\right)^2,\mu_*\>=\<\chi^2(\bar{g})-\left(U\chi(\bar{g})\right)^2,\mu_*\>.$$
Now, observe that $U\chi^2(\bar{g})-\left(U\chi(\bar{g})\right)^2$ is non-negative (by the Cauchy--Schwarz inequality), and it is also continuous, due to the continuity of $\chi(\bar{g})$ and the fact that $P$ is Feller. Obviously, the former can be justified by using the Weierstrass M-test, together with \eqref{eq:integrable}. Consequently, taking into account that $P^n\mu\stackrel{w}{\to}\mu_*$ (which holds by Theorem~\ref{thm:spectral_gap}), we can apply the Portmanteau to \eqref{dod}, which yields that
$$\liminf_{n\to\infty}\mathbb{E}_{\mu}\left(Z_n^2(\bar{g})\right)\geq \<U\chi^2(\bar{g})-\left(U\chi(\bar{g})\right)^2,\mu_*\>=\<\chi^2(\bar{g})-\left(U\chi(\bar{g})\right)^2,\mu_*\>>0.$$
This finally implies that the sequence $(h_n(\mu))_{n\geq N}$ is strictly increasing for some sufficiently large \hbox{$N\in\mathbb{N}$}, as claimed.

In view of the above, we may assume, without loss of generality, that $(h_n(\mu)(\bar{g}))_{n\in\mathbb{N}_0}$ is strictly increasing, and therefore we are allowed to introduce 
\begin{align}\label{def:eta}
\begin{aligned}
&\eta_n(\bar{g})(t):=\frac{M_k(\bar{g})+\cfrac{h_n^2(\mu)(\bar{g})\,t-h_k^2(\mu)(\bar{g})}{h_{k+1}^2(\mu)(\bar{g})-h_k^2(\mu)(\bar{g})}\,Z_{k+1}(\bar{g})}{\sigma(\bar{g})\sqrt{2n\ln\ln n}}\;\;\;
\mbox{for}\;\; n>e,\;\;t\in(0,1]\\
&\hspace{1.9cm}\mbox{and}\;\;1\leq k\leq n-1\;\;\text{such that}\;\; h_k^2(\mu)(\bar{g})\leq h_n^2(\mu)(\bar{g})\,t\leq h_{k+1}^2(\mu)(\bar{g}),\; \\
&\eta_n(\bar{g})(t):=0\;\;\;\mbox{for}\;\;\; n\leq e\;\;\;\mbox{or}\;\;\;t=0.
\end{aligned}
\end{align}
Note that, according to Lemma \ref{lem:3}, we have
\begin{equation}\label{limit}
\lim_{n\to\infty}\frac{\sqrt{2\,h_n^2(\mu)(\bar{g})\,\ln\ln h_n^2(\mu)(\bar{g})}}{\sigma(\bar{g})\sqrt{2n\ln \ln n}}=1.
\end{equation}
Further, Lemmas \ref{lem:5} and \ref{lem:6} ensure conditions \eqref{cond:3} and \eqref{cond:1}, \eqref{cond:2}, respectively. 
Combining this with \eqref{limit} and referring to \cite[Theorem 1]{hs}, we can conclude that 
$\{\eta_n(\bar{g}):\,n\in\mathbb{N}_0\}$ is relatively compact in $\mathcal{C}$, and that the set of its limit points coincides with~$\mathcal{K}$ $\mathbb{P}_{\mu}$-a.s.

Now, let us define
\begin{align}
\begin{split}
\label{def:tilde_eta}
&\tilde{\eta}_n(\bar{g})(t)
=:\frac{M_k(\bar{g})-(nt-k)\,Z_{k+1}(\bar{g})}{\sigma(\bar{g})\sqrt{2n\ln\ln n}}
\;\;\;
\mbox{for}\;\;\; n>e\;\;\text{and}\;\; t\in(0,1],\\
&\hspace{1.76cm}\text{whenever}\;\;k\in\{1,\ldots,n-1\}\;\;\text{is such that}\;\;k\leq nt\leq k+1,\\
&\tilde{\eta}_n(\bar{g})(t):=0
\;\;\;\mbox{for}\;\;\; n\leq e\;\;\;\mbox{or}\;\;\;t=0.
\end{split}
\end{align}

Observe that, for any $t\in (0,1]$, $n>e$ and $k\in\{1,\ldots,n-1\}$ satisfying $k\leq nt\leq k+1$, one can write
\begin{equation}\label{eq:norming}
\frac{k\sigma^2(\bar{g})}{h_k^2(\mu)(\bar{g})}\,h_k^2(\mu)(\bar{g})
\leq \frac{n\sigma^2(\bar{g})}{h_n^2(\mu)(\bar{g})}\,t\,h_n^2(\mu)(\bar{g})
\leq\frac{(k+1)\sigma^2(\bar{g})}{h_{k+1}^2(\mu)(\bar{g})}\,h_{k+1}^2(\mu)(\bar{g}).
\end{equation}
By virtue of Lemma \ref{lem:3} we know that $\lim_{n\to\infty}n\sigma^2(\bar{g})h_n^{-2}(\mu)(\bar{g})=1$. Hence, due to \eqref{eq:norming}, for any $\epsilon>0$ and sufficiently large $n$, we obtain
\begin{equation}
\label{estim_lil}
\frac{1-{\epsilon}}{1+{\epsilon}}\,h_k^2(\mu)(\bar{g})\leq t\,h_n^2(\mu)(\bar{g})\leq\frac{1+{\epsilon}}{1-{\epsilon}}\,h_{k+1}^2(\mu)(\bar{g})\;\;\;\text{for every}\;\;\;k\in\mathbb{N}\;\;\;\text{such that}\;\;\;k\leq nt\leq k+1.
\end{equation}

Our goal for now is to prove that the functional LIL holds for the sequence $\left(Z_n(\bar{g})\right)_{n\in\mathbb{N}}$, that is, \hbox{$\{\tilde{\eta}_n(\bar{g}):n\in\mathbb{N}_0\}$} is relatively compact in $\mathcal{C}$, and the set of its limit points coincides with~$\mathcal{K}$ $\mathbb{P}_{\mu}$-a.s. For this purpose, it suffices to show that, for every $t\in(0,1]$, there exists a sequence $(t_n)_{n\in\mathbb{N}}$ of positive numbers such that
\begin{equation*}
\tilde{\eta}_n(\bar{g})(t)=\eta_n(\bar{g})(t_n)\quad\mbox{for every}\quad n\in\mathbb{N}\quad\text{and}\quad \lim_{n\to\infty}t_n=t.
\end{equation*}
To do this, fix $n>e$, and let $k$ be such that $k\leq nt\leq k+1$. According to definitions \eqref{def:eta} and~\eqref{def:tilde_eta}, we see that the equality $\tilde{\eta}_n(\bar{g})(t)=\eta_n(\bar{g})(t_n)$ is satisfied for
\begin{equation}\label{def:t_n}
t_n:=\frac{(nt-k)\left(h_{k+1}^2(\mu)(\bar{g})-h_k^2(\mu)(\bar{g})\right)+h_k^2(\mu)(\bar{g})}{h_n^2(\mu)(\bar{g})},
\end{equation}
whenever 
\begin{equation}\label{to_be_obeyed}
h_k^2(\mu)(\bar{g})\leq t_n\,h_n^2(\mu)(\bar{g})\leq h_{k+1}^2(\mu)(\bar{g}).
\end{equation}
But \eqref{to_be_obeyed} obviously holds,  since $0\leq nt-k \leq 1$. 
Moreover, for every ${\epsilon}$ and sufficiently large $n$, we have
\begin{equation*}
t_n\in\left[t\,\frac{1-{\epsilon}}{1+{\epsilon}},t\,\frac{1+{\epsilon}}{1-{\epsilon}}\right],\quad \mbox{whenever}\quad k \leq nt \leq k+1.
\end{equation*}
Indeed, from  \eqref{estim_lil} and \eqref{to_be_obeyed} it follows that, for $k\leq nt\leq k+1$,
\begin{equation*}
t_n\in\left[\frac{h_k^2(\mu)(\bar{g})}{h_n^2(\mu)(\bar{g})},\frac{h_{k+1}^2(\mu)(\bar{g})}{h_n^2(\mu)(\bar{g})}\right]\subset \left[\frac{(1-{\epsilon})\,t\,h_k^2(\mu)(\bar{g})}{(1+{\epsilon})\,h_{k+1}^2(\mu)(\bar{g})},\frac{(1+{\epsilon})\,t\,h_{k+1}^2(\mu)(\bar{g})}{(1-{\epsilon})\,h_k^2(\mu)(\bar{g})}\right],
\end{equation*}
and, according to Lemma \ref{lem:3},
\begin{equation*}
\frac{h_{k+1}^2(\mu)(\bar{g})}{h_k^2(\mu)(\bar{g})}=\frac{h_{k+1}^2(\mu)(\bar{g})}{k+1}\frac{k}{h_k^2(\mu)(\bar{g})}\frac{k+1}{k}
\end{equation*}
converges to $1$ as $n$, and therefore also as $k$, tends to infinity. This finally implies that $t_n\to\infty$, as $n\to \infty$, and thus the desired conclusion follows.

\textbf{Step II. } 
To complete the proof, it suffices to show that 
\begin{equation}\label{eq:0}
\lim_{n\to\infty}\sup_{t\in[0,1]}
\left|\tilde{\eta}_n(\bar{g})(t)-r_n(\bar{g})(t)\right|=0,
\end{equation}
where $(r_n(\bar{g}))_{n\in\mathbb{N}_0}$ is given by \eqref{def:r_n}. 
Indeed, note that \eqref{eq:0}, together with the conclusion of Step I, implies that  $({g}(\phi_n))_{n\in\mathbb{N}_0}$ satisfies the invariance principle for the LIL.

In order to establish \eqref{eq:0}, fix an arbitrary $\bar{\epsilon}>0$ and, for any $k,n\in\mathbb{N}$ with $n>e$ \,(that is $n\geq 4$), define the sets 
\begin{equation*}
A_{k,n}:=
\left\{\frac{\left|M_k(\bar{g})-\sum_{i=0}^{k-1}\bar{g}(\phi_i)\right|}{\sigma(\bar{g})\sqrt{n\ln\ln n}}\geq \bar{\epsilon}/2\right\}
\cup
\left\{\frac{\left|Z_{k+1}(\bar{g})-\bar{g}(\phi_{k})\right|}{\sigma(\bar{g})\sqrt{n\ln\ln n}}\geq\bar{\epsilon}/2\right\}.
\end{equation*}
Further, choose $p\in(2,\infty)$ such that \eqref{eq:abbrev_formula} holds with $\kappa=r$ (specified in the assumptions of the theorem). Using this property, as well as the Markov inequality, we obtain
\begin{align*}
\mathbb{P}_{\mu}\left(\frac{\left|M_k(\bar{g})-\sum_{i=0}^{k-1}\bar{g}(\phi_i)\right|}{\sigma(\bar{g})\sqrt{n\ln\ln n}}\geq \bar{\epsilon}/2\right)
&=\mathbb{P}_{\mu}\left(\frac{\left|\chi(\bar{g})(\phi_k)-\chi(\bar{g})(\phi_0)\right|}{\sigma(\bar{g})\sqrt{n\ln\ln n}}\geq \bar{\epsilon}/2\right)\\
&\leq(2/\bar{\epsilon})^{2+r}p\,\frac{\mathbb{E}_{\mu}|\chi(\bar{g})(\phi_k)|^{2+r}+\mathbb{E}_{\mu}|\chi(\bar{g})(\phi_0)|^{2+r}}{\left(\sigma(\bar{g})\sqrt{n\ln\ln n}\right)^{2+r}}.
\end{align*}
From \eqref{prop_chi} we know that there exist $q\in(0,1)$ and $\hat{c}\in(0,\infty)$ such that, for any $k\in\mathbb{N}$,
\begin{align}\label{eq:oszac}
\begin{aligned}
\mathbb{E}_{\mu}\left|\chi(\bar{g})(\phi_k)\right|^{2+r}
&=\int_X\left|\chi(\bar{g})(u)\right|^{2+r}P^k\mu(du)\\
&\leq p\left|\chi(\bar{g})(\bar{x})\right|^{2+r}
+p\int_X\left|\chi(\bar{g})(u)-\chi(\bar{g})(\bar{x})\right|^{2+r}P^k\mu(du)\\
&\leq p\left|\chi(\bar{g})(\bar{x})\right|^{2+r}
+p^2\left(\frac{2\hat{c}\|\bar{g}\|_{BL}}{1-q}\right)^{2+r}\left(1+\left\langle \varrho^{2+r}(\cdot,\bar{x}),P^k\mu\right\rangle\right).
\end{aligned}
\end{align}
Then, taking into account condition \hyperref[cnd:B1p]{(B1$^{*}$)}, and arguing as in \eqref{estim_}, we obtain 
\begin{align*}
\mathbb{E}_{\mu}\left|\chi(\bar{g})(\phi_k)\right|^{2+r}
&\leq p\left|\chi(\bar{g})(\bar{x})\right|^{2+r}
+p^2\left(\frac{2\hat{c}\|\bar{g}\|_{BL}}{1-q}\right)^{2+r}
\left(1+\left\langle \varrho^{2+r}(\cdot,\bar{x}),\mu\right\rangle^{1/(2+r)}+\frac{b^{*}}{1-a^{*}}\right).
\end{align*}
Hence, for any $k\in\mathbb{N}$ and $n\geq 4$,
\begin{equation}\label{eq:An1}
\mathbb{P}_{\mu}\left(\frac{\left|M_k(\bar{g})-\sum_{i=1}^{k}\bar{g}\left(\phi_i\right)\right|}{\sigma(\bar{g})\sqrt{n\ln\ln n}}\geq \bar{\epsilon}/2\right)\leq \frac{c_1}{\left(\sigma(\bar{g})\sqrt{n\ln\ln n}\right)^{2+r}},
\end{equation}
where $c_1>0$ is some constant independent of $k$ and $n$. Similarly to this, we deduce that 
\begin{equation}\label{eq:An2}
\mathbb{P}_{\mu}\left(\frac{\left|Z_{k+1}(\bar{g})-\bar{g}(\phi_{k})\right|}{\sigma(\bar{g})\sqrt{n\ln\ln n}}\geq\bar{\epsilon}/2\right)\leq \frac{c_2}{\left(\sigma(\bar{g})\sqrt{n\ln\ln n}\right)^{2+r}},
\end{equation}
where $c_2>0$ also does not depend on $k$ and $n$. Clearly, \eqref{eq:An1} and \eqref{eq:An2} imply that $\sum_{n=4}^{\infty}\mathbb{P}_{\mu}(A_{k,n})<\infty$ for every $k\in\mathbb{N}$, and therefore, from the Borel--Cantelli Lemma, 
it follows that 
$\mathbb{P}_{\mu}(\bigcup_{m=4}^{\infty}\bigcap_{n=m}^{\infty} A_{k,n}^{\prime})=1$ for any $k\in\mathbb{N}$. Let
\begin{equation*}
\Omega_0:=\bigcap_{k=1}^{\infty}\bigcup_{m=4}^{\infty}\bigcap_{n=m}^{\infty} A_{k,n}^{\prime}.
\end{equation*}
Obviously, $\mathbb{P}(\Omega_0)=1$. Furthermore, for each $\omega\in\Omega_0$, one can choose $n_0\geq 4>e$ such that 
\begin{equation*}
\sup_{t\in[0,1]}\left|\frac{M_k(\bar{g})-(nt-k)Z_{k+1}(\bar{g})}{\sigma(\bar{g})\sqrt{n\ln\ln n}}-\frac{\sum_{i=0}^{k-1}\bar{g}(\phi_i)+(nt-k)\bar{g}(\phi_{k})}{\sigma(\bar{g})\sqrt{n\ln\ln n}}\right|<\bar{\epsilon}
\end{equation*}
for every $n>n_0$ and any $k\in\{1,\ldots,n-1\}$ satisfying $k<nt\leq k+1$. The proof is now  completed, since $\bar{\epsilon}$ was chosen arbitrarily. 
\end{proof}

\begin{remark}\label{rem}
Analyzing the proof of Theorem \ref{thm:LIL} shows that its assertion remains valid under two more general (and simultaneously, much more abstract) hypotheses, namely:
\begin{itemize}
\item[(i)] condition \hyperref[cnd:B0]{(B0)} and \hyperref[cnd:B1p]{(B1$^*$)} are fulfilled;
\item[(ii)] there exists a Markovian coupling $(\phi^{(1)}_n,\phi^{(2)}_n)_{n\in\mathbb{N}_0}$ of $\Pi$ for which condition \eqref{eq:lemma_thesis} is satisfied.
\end{itemize}
\end{remark}

Let us conclude this section with a brief comparison of the foregoing remark and \cite[Theorem 1]{bms}.
First of all, hypothesis (H2) in \cite{bms}, if formulated for the Fortet--Mourier distance, would be, in fact, equivalent to the existence of a Markovian coupling $(\phi_n^{(1)},\phi_n^{(2)})_{n\in\mathbb{N}_0}$ for which there exist $q\in(0,1)$ and $c\in\mathbb{R}_+$ such that, for any $\mu,\nu\in\mathcal{M}_{1,1}^{\varrho(\cdot, \bar{x})}(X)$,
\begin{align}\label{eq:A}
\left|
\mathbb{E}_{\mu\otimes\nu}
\left(
g\left(\phi_n^{(1)}\right)
-g\left(\phi_n^{(2)}\right)
\right)\right|
\leq \|g\|_{BL}\,cq^nd_{FM}\left(\mu,\nu\right), \quad\text{whenever}\quad g\in Lip_b(X).
\end{align}
On the other hand, conditions \hyperref[cnd:B0]{(B0)}-\hyperref[cnd:B4]{(B5)}, assumed here, imply \eqref{eq:lemma_thesis}, which can be also written in the following form:
\begin{align}\label{B}
\mathbb{E}_{\mu\otimes\nu}
\left|g\left(\phi_n^{(1)}\right)-g\left(\phi_n^{(2)}\right)\right|
\leq \|g\|_{BL}\,c(\mu,\nu)\,q^n\;\;\;\text{for any}\;\;\;g\in Lip_b(X),
\end{align}
where $q\in(0,1)$ and $c(\mu,\nu)\in\mathbb{R}_+$ depends on $\mu$ and $\nu$. Obviously, none of these conditions need not imply the other. However, it is natural to expect that verifying hypothesis \eqref{eq:A}, corresponding to (H2) from \cite{bms}, will usually require establishing the inequality
\begin{align}\label{C}
\mathbb{E}_{\mu\otimes\nu}\left|g\left(\phi_n^{(1)}\right)
-g\left(\phi_n^{(2)}\right)\right|\leq \|g\|_{BL}\,cq^n\,d_{FM}(\mu,\nu),
\end{align}
which is clearly stronger (and more difficult to assure in applications) than condition \eqref{B}, used in this paper.

Moreover, it is not hard to show that hypotheses (H1) and (H3), assumed in \cite{bms}, can be derived from conditions \hyperref[cnd:B1]{(B1)} and \hyperref[cnd:B1p]{(B1$^*$)}, respectively. It should be, however, noted that, in practise, verifying those former would usually come down to checking those latter. 

Let us also stress that, according to  Lemma \ref{cor:g_useful}, hypotheses  \hyperref[cnd:B1]{(B1)}-\hyperref[cnd:B4]{(B5)}, involving the Markov operator~$P$ and a suitable ,,subcoupling'' $Q$ on $X^2\times\mathcal{B}_{X^2}$, do imply condition \eqref{B}, and thus Theorem \ref{thm:LIL} does not demand assuming this property directly. For explicitly defined random dynamical systems, it is quite intuitive how to define $Q$ (see e.g. (6.9) in \cite{dawid}, where the model from Section \ref{sec:ex} is considered, or cf. the proof of \cite[Proposition 3.1]{ks}, concerning a random iterated function system with an arbitrary set of transformations).

Summarizing the above discussion, none of the aforesaid results, that is, \cite[Theorem 1]{bms} and the conclusion formulated in Remark \ref{rem}, need not imply the other, yet the statement given in Remark \ref{rem} may potentially have wider applications than \cite[Theorem 1]{bms} due to the more practical version of condition~(H2).

\section{An Abstract Model for Gene Expression}\label{sec:ex}
In this part of the paper, we intend to apply Theorem \ref{thm:LIL} to a particular random dynamical system, introduced in \cite{dawid}, which provides, among others, a mathematical framework for the analysis of gene expression dynamics (cf. e.g. \cite{dawid,hhs,mtky} for biological interpretation).

Let $(H,\|\cdot\|)$ and $Y$ be a separable Banach space and a~closed subset of this space, respectively. Further, for any $x\in H$ and any $r>0$, let $B(x,r)$ denote an open ball in $H$ centered at $x$ and of radius~$r$. We additionally consider a~topological measure space $(\Theta,\mathcal{B}(\Theta),\Delta)$ with a $\sigma$-finite Borel measure~$\Delta$. With a slight abuse of notation, we will write $d\theta$ instead of $\Delta(d\theta)$ in the rest of the paper. Finally, fix $N\in\mathbb{N}$, and endow the set $I:=\{1,\ldots,N\}$ with the metric $(k,l)\mapsto {d}(k,l)$ given by ${d}(k,l)=1$ for $k\neq l$ and ${d}(k,l)=0$ for $k=l$. 

The subject of our interest will be a discrete-time random dynamical system, defined by the post-jump locations of a piecewise-deterministic stochastic process $(Y(t))_{t\in\mathbb{R}_+}$, evolving on the space $Y$.  The jumps of this process occur at random time points $\tau_n$, \hbox{$n\in\mathbb{N}$}, which coincide with the jump times of a~Poisson process with intensity $\lambda$. In the time intervals $[\tau_{n-1},\tau_{n})$, \hbox{$n\in\mathbb{N}$}, where $\tau_0=0$, the dynamics is deterministically driven by a finite number of semiflows \hbox{$S_i:\mathbb{R}_+\times Y\to Y$}, $i\in I$, which are assumed to be continuous with respect to each variable. The semiflows are switched at the jump times according to a~matrix of continuous probabilities \hbox{$\pi_{ij}:Y\to\left[0,1\right]$}, $i,j\in I$, which satisfy
$\sum_{j\in I}\pi_{ij}(y)=1$ for any $y\in Y$ and $i\in I$. 

The above description can be formalized by putting
\begin{equation}
\label{e:yt}
Y(t):=S_{\xi_n}\left(t-\tau_n,Y\left(\tau_n\right)\right)\;\;\; \mbox{for}\;\;\; t\in[\tau_n,\tau_{n+1}),\;n\in\mathbb{N}_0.\end{equation}
where $\xi_n$ is an $I$-valued random variable  indicating the index of a semiflow chosen directly after the $n$-th jump. The post-jump location $Y(\tau_n)$ is a result of a transformation of the state just before the jump, i.e. $Y(\tau_n-)$, attained by a function randomly selected among all the possible
ones \hbox{$w_{\theta}:Y\to Y$}, \hbox{$\theta\in\Theta$}, and adding a random disturbance $H_n$, which remains within an \hbox{$\varepsilon$-neighbourhood} of zero. Formally, we may therefore write
\begin{equation*}
Y(\tau_n):=w_{\theta_n}(Y(\tau_n-))+H_n \quad\text{for any}\quad n\in\mathbb{N}.
\end{equation*} 

It is required that all the maps $(y,\theta)\mapsto w_{\theta}(y)$ are continuous. Further, we assume that, for some $\varepsilon>0$, all the variables $H_n$, $n\in\mathbb{N}$,  have a common distribution \hbox{$\nu^{\varepsilon}\in\mathcal{M}_1(H)$} supported on $B(0,\varepsilon)\subset H$, and that  
\[w_{\theta}(y)+h\in Y\;\;\; \text{for any} \;\;\; h\in\text{supp}(\nu^{\varepsilon}),\; \theta \in \Theta,\;y\in Y.\]
Moreover, the probability of choosing $w_{\theta}$ (at the jump time $\tau_n$) is determined by a density \hbox{$\theta \mapsto p(y,\theta)$}, whenever $Y(\tau_n-)=y$, where \hbox{$p: Y\times \Theta \to \left[0,\infty\right)$} is a given continuous function satisfying 
$\int_{\Theta} p(y,\theta)\,d\theta=1$ for any $y\in Y$.

Now, consider the set $X:=Y\times I$, endowed with the metric of the form
\begin{equation}\label{dfm}\varrho_{\tilde{c}}\left((y_1,i),(y_2,j)\right)=\|y_1-y_2\|+\tilde{c}\,{d}(i,j)\quad\mbox{for}\quad (y_1,i),(y_2,j)\in X,\end{equation}
where $\tilde{c}$ is a positive constant.
The main goal of this section is to establish the functional LIL for the sequence of random variables $(Y_n,\xi_n)_{n\in\mathbb{N}_0}$ with values in $X$, where $Y_n:=Y(\tau_n)$ and $\xi_n$ is the $I$-valued random variable appearing in \eqref{e:yt}. The joint distribution of $(Y_0,\xi_0)$ will be denoted by $\mu\in\mathcal{M}_1(X)$. The sequence $(Y_n,\xi_n)_{n\in\mathbb{N}_0}$ can be defined on an appropriate probability space, say $(\Omega, \mathcal{F}, \mathbb{P}_{\mu})$, in such a way that, for every $n\in\mathbb{N}_0$,
\begin{equation}\label{def:Y_n}
Y_{n+1}=w_{\theta_{n+1}}(S_{\xi_{n}}(\Delta \tau_{n+1},Y_{n}))+H_{n+1}, \quad\text{where}\quad \Delta\tau_{n+1}:=\tau_{n+1}-\tau_n,
\end{equation}
whilst $(\xi_n)_{n\in\mathbb{N}_0}$, taking values in $I$, and the remaining, auxiliary sequences of random variables $(\tau_n)_{n\in\mathbb{N}_0}$, $(\theta_n)_{n\in\mathbb{N}}$ and $(H_n)_{n\in\mathbb{N}}$ with values in $\mathbb{R}_+$, $\Theta$ and $H$, respectively, are specified by the following conditions:
\begin{itemize}

\item[(i)]  $\tau_0=0$, $\tau_n\uparrow \infty$ almost surely, as $n\to \infty$, and the increments $\Delta \tau_n$, $n\in\mathbb{N}$, are mutually independent and have the common exponential distribution with intensity $\lambda>0$;
\item[(ii)] $H_n$, $n\in\mathbb{N}$, are identically distributed with~$\nu^{\varepsilon}$;
\item[(iii)] $\theta_n$ and $\xi_n$, $n\in\mathbb{N}$, are defined inductively, so that
\begin{gather*}
\mathbb{P}_{\mu}(\theta_{n+1}\in D\;|\;S_{\xi_{n}}(\Delta \tau_{n+1},Y_{n})=y;\,W_n)
=\int_{D} p(y,\theta)\,d\theta, \\
\mathbb{P}_{\mu}(\xi_{n+1}=j\;|\;Y_{n+1}=y,\, \xi_{n}=i;\,W_n)=\pi_{ij}(y) \;\;\;\text{for any}\;\;\;D\in\mathcal{B}(\Theta),\;y\in Y,\;i, j\in I,\;n\in\mathbb{N}_0,
\end{gather*}
where 
\begin{equation*}
W_0=(Y_0,\;\xi_0)\;\;\;\text{and}\;\;\;W_n=(W_0,\;H_1,\ldots,H_n,\;\tau_1,\ldots,\tau_n,\;\theta_1,\ldots,\theta_n,\;\xi_1,\ldots,\xi_n)\;\;\;\text{for}\;\;\;n\in\mathbb{N}.
\end{equation*}
\end{itemize}
We also demand that, for any $n\in\mathbb{N}_0$, the variables $\Delta\tau_{n+1}$, $H_{n+1}$, $\theta_{n+1}$ and $\xi_{n+1}$ are ({mutually}) conditionally independent given $W_n$, and that $\Delta\tau_{n+1}$ and $H_{n+1}$ are independent of $W_n$.

An easy computation shows that  $(Y_n,\xi_n)_{n\in\mathbb{N}_0}$ is a time-homogeneous Markov chain with transition law $\Pi:X\times\mathcal{B}(X)\to\left[0,1\right]$ given by
\begin{align}\label{def:Pi_epsilon}
\Pi((y,i),A)=&\int_0^{\infty}\lambda e^{-\lambda t}\int_{\Theta}p(S_i(t,y),\theta) 
\nonumber\\
&\times \int_{\supp\,\nu^{\varepsilon}}\left(\sum_{j\in I}\mathbbm{1}_A(w_{\theta}(S_i(t,y))+h,j)\,\pi_{ij}(w_{\theta}(S_i(t,y))+h)\right) \nu^{\varepsilon}(dh)\,d\theta\,dt\quad 
\end{align}
for any $(y,i)\in X$ and any $A\in\mathcal{B}_X$. 

Let us now detail the conditions that have been employed in \cite{dawid} (cf. also \cite{asia}), in order to establish the exponential ergodicity for the chain $(Y_n,\xi_n)_{n\in\mathbb{N}_0}$ in the Fortet--Mourier distance. Namely, it is assumed that there exist $\bar{y}\in Y$, a function $\mathcal{L}:Y\to \mathbb{R}_+$ that is bounded on bounded sets, and constants $\alpha\in\mathbb{R}$, $L,\,L_w,\,L_{\pi},\,L_p,\,c_{\pi},\,c_p>0$ such that
\begin{equation}\label{eq:balance1} LL_w+\alpha/\lambda<1, \end{equation}
and, for any $i,i_1,i_2\in I$, $y_1,y_2\in Y$, $t\geq 0$, the following conditions hold:
\begin{gather}
\tag{A1}\label{cnd:a1}
\sup_{y\in Y}\int_0^{\infty}e^{-\lambda t}\int_{\Theta}\left\| w_{\theta}(S_i(t,\bar{y}))-\bar{y}\right\|p(S_i(t,y),\theta)\,d\theta\,dt<\infty,\\
\tag{A2}\label{cnd:a2}
\left\|S_{i_1}(t,y_1)-S_{i_2}(t,y_2)\right\|\leq Le^{\alpha t}\left\|y_1-y_2\right\|+t\mathcal{L}(y_2)\,d(i_1,i_2),\\
\tag{A3}\label{cnd:a3}
\int_{\Theta}\left\|w_{\theta}(y_1)-w_{\theta}(y_2)\right\| p(y_1,\theta)\,d\theta\leq L_w\left\|y_1-y_2\right\|,\\
\tag{A4}\label{cnd:a4}
\sum_{j\in I} |\pi_{ij}(y_1)-\pi_{ij}(y_2)|\leq L_{\pi} \left\|y_1-y_2\right\|,\;\;\;\int_{\Theta} |p(y_1,\theta)-p(y_2,\theta)|\,d\theta\leq L_{p} \left\|y_1-y_2\right\|,\\
\tag{A5}\label{cnd:a5}
\sum_{j\in I} \pi_{i_1j}(y_1)\wedge \pi_{i_2j}(y_2)\geq c_{\pi},\;\;\;\int_{\Theta(y_1,y_2)}p(y_1,\theta)\wedge p(y_2,\theta)\,d\theta\geq c_p,
\end{gather}
where $\Theta(y_1,y_2):=\{\theta\in\Theta:\, \|w_{\theta}(y_1)-w_{\theta}(y_2)\|\leq L_w \|y_1-y_2\|\}$. 

From the proof of \cite[Theorem 4.1]{dawid} it follows that, if conditions \hyperref[cnd:a1]{(A1)}-\hyperref[cnd:a5]{(A5)} hold with $\alpha$, $L$ and $L_w$ satisfying~\eqref{eq:balance1}, and the constant $\tilde{c}$, appearing in \eqref{dfm}, is sufficiently large (according to the constants in the hypotheses above; cf. \cite{dawid}), then the assumptions of \hbox{Theorem~\ref{thm:spectral_gap}}, i.e. \hyperref[cnd:B0]{(B0)}-\hyperref[cnd:B4]{(B5)}, are fulfilled for the Markov operator $P$ corresponding to $\Pi$, defined by \eqref{def:Pi_epsilon}, and some substochastic kernel $Q$ on $X^2\times \mathcal{B}_{X^2}$. The latter, for any  $(y_1,i_1), (y_2,i_2)\in X$ and $A\in\mathcal{B}_{X^2}$, is given by
\begin{align}\label{def:Q_ex}
\begin{aligned}
Q\left(\left(y_1,i_1\right),\left(y_2,i_2\right),A\right)
=&\int_0^{\infty}\lambda e^{-\lambda t}
\int_{\Theta}p\left(S_{i_1}\left(t,y_1\right),\theta\right)\wedge 
p\left(S_{i_2}\left(t,y_2\right),\theta\right)
\\
&\times\int_{\supp\,\nu^{\varepsilon}}\Bigg(\sum_{j\in I}\mathbbm{1}_A
\left(
\left(w_{\theta}\left(S_{i_1}(t,y_1)\right)+h,j\right),\left(w_{\theta}\left(S_{i_2}(t,y_2)\right)+h,j\right)\right)
\\
&\times 
\pi_{i_1j}\left(w_{\theta}\left(S_{i_1}\left(t,y_1\right)\right)+h\right)\wedge
\pi_{i_2j}\left(w_{\theta}\left(S_{i_2}\left(t,y_2\right)\right)+h\right)\Bigg)
\,\nu^{\varepsilon}(dh)\,d\theta\,dt.
\end{aligned}
\end{align} 
Consequently,  $P$ is then exponentially ergodic in $d_{FM}$ induced by the metric $\varrho_{\tilde{c}}$, defined by \eqref{dfm}.

It should be pointed out here that, in the proof of \cite[Theorem 4.1]{dawid}, condition \hyperref[cnd:B1]{(B1)} has been verified for the \hbox{Lyapunov} function of the form
\begin{equation}
\label{def:V_erg}
\bar{V}(x):= \|y-\bar{y}\|\quad\text{for}\quad x=(y,i)\in X
\end{equation}
with $\bar{y}\in Y$ determined by \hyperref[cnd:a1]{(A1)}. However, let us observe that \hyperref[cnd:B1]{(B1)} must be also fulfilled for the Lyapunov function $V_{\tilde{c}}$, considered in this case, which is defined by
\begin{align}
\label{def:V_lil}
V_{\tilde{c}}(x):=\varrho_{\tilde{c}}(x,\bar{x})\quad\mbox{for every}\quad x\in X,
\end{align}
where $\bar{x}:=\left(\bar{y},\bar{i}\right)$, and $\bar{i}$ is an arbitrarily fixed element of $I$, which is in accordance with the assumptions imposed in Section \ref{sec:lil}, wherein Theorem \ref{thm:LIL} is stated.

Willing to verify the Strassen invariance principle for the LIL, we need to strengthen conditions \eqref{eq:balance1}, \hyperref[cnd:a1]{(A1)} and \hyperref[cnd:a3]{(A3)}. Namely, we require that there exist $r\in(0,2)$ and $L_w^{*}>0$, satisfying
\begin{equation}
\label{LIL_condition}
L^{2+r}L_w^{*}+(2+r)\alpha\lambda^{-1}<1
\end{equation}
such that, for some $\bar{y}\in Y$, the following statements hold:
\begin{gather}
\tag{A1$^*$}\label{cnd_strong:A1}
\sup_{y\in Y}\int_0^{\infty}e^{-\lambda t}\int_{\Theta}\|{ w_{\theta}(S_i(t,\bar{y}))-\bar{y}}\|^{2+r}p(S_i(t,y),\theta)\,d\theta\,dt<\infty \quad\text{for any}\quad i\in I.\\
\tag{A3$^*$}\label{cnd_strong:A3}
\int_{\Theta} \|{w_{\theta}(y_1)-w_{\theta}(y_2)}\|^{2+r}p(y_1,\theta)\,d\theta\leq L_w^{*}\|{y_1-y_2}\|^{2+r}\quad\text{for any}\quad y_1,y_2\in Y.
\end{gather}

\begin{remark}\label{rem:implication}
Due to the H\"older inequality, conditions \hyperref[cnd_strong:A1]{(A1$^*$)} and \hyperref[cnd_strong:A3]{(A3$^*$)} imply \hyperref[cnd:a1]{(A1)} and \hyperref[cnd:a3]{(A3)}, respectively,  and the latter hold with $L_w:=(L_w^*)^{1/(2+r)}$. 

Furthermore, let us observe that inequality \eqref{LIL_condition} implies \eqref{eq:balance1}. To see this, suppose, conversly to \eqref{eq:balance1}, that \hbox{$LL_w+\alpha\lambda^{-1}\geq 1$}. Then, noting that \hbox{$\alpha\lambda^{-1} < (2+r)^{-1}<1$}, we obtain $(LL_w)^{2+r}\geq(1-\alpha\lambda^{-1})^{2+r}$, which due to the Bernoulli inquality, leads to the contradiction with \eqref{LIL_condition}.
\end{remark}

\begin{theorem}\label{thm_LIL}
Let $(Y_n,\xi_n)_{n\in\mathbb{N}_0}$ be the Markov chain with transition law $\Pi$ given by \eqref{def:Pi_epsilon} and initial distribution $\mu\in\mathcal{M}_{1}(X)$. Further, assume that conditions \hyperref[cnd:a1]{(A1)}-\hyperref[cnd:a5]{(A5)} with \hyperref[cnd:a1]{(A1)} and \hyperref[cnd:a3]{(A3)} strengthened to \hyperref[cnd_strong:A1]{(A1$^*$)} and \hyperref[cnd_strong:A3]{(A3$^*$)}, respectively, hold with constants $\alpha\in\mathbb{R}$, $L, L_w^*>0$  and $r\in (0,2)$ satisfying \eqref{LIL_condition}. Then, for every $g\in Lip_b(X)$ with $\sigma^2(\bar{g})>0$, the chain $(g(Y_n,\xi_n))_{n\in\mathbb{N}_0}$ obeys the invariance principle for the LIL, provided that $\mu\in\mathcal{M}_{1,2+r}^{\bar{V}}(X)$ for the Lyapunov function $\bar{V}$, given by \eqref{def:V_erg}.
\end{theorem}
\begin{proof}
We intend to apply our criterion on the invariance principle for the LIL, stated as Theorem \ref{thm:LIL}, for the Markov operator $P$ induced by $\Pi$ and the substochastic kernel $Q$ given by \eqref{def:Q_ex}. To this end, let us first observe that \hbox{$\mathcal{M}_{1,2+r}^{\bar{V}}(X)=\mathcal{M}_{1,2+r}^{V_{\tilde{c}}}(X)$}, where $V_{\tilde{c}}$ is defined by \eqref{def:V_erg}. This yields that \hbox{$\mu\in \mathcal{M}_{1,2+r}^{V_{\tilde{c}}}(X)$}. Moreover, as mentioned earlier, conditions \hyperref[cnd:B0]{(B0)}-\hyperref[cnd:B4]{(B5)} can be derived from \hyperref[cnd:a1]{(A1)}-\hyperref[cnd:a5]{(A5)}, fulfilled with $L$, $L_w$ and $\alpha$ satisfying \eqref{eq:balance1} (as it was shown in the proof of \cite[Theorem 4.1]{dawid}), and so, according to Remark~\ref{rem:implication}, they can be also derived from the assumptions of this theorem.

In the light of the above, the proof of Theorem \ref{thm_LIL} reduces to showing \hyperref[cnd:B1p]{(B1$^{*}$)}. In order to do this, first of all, note that
\begin{align}\label{eq}
\begin{aligned}
\big\langle &\varrho_c^{2+r}(\cdot,\bar{x}),P\mu \big\rangle 
=\int_X\int_X 
\varrho_c^{2+r}\left((z,l),(\bar{y},\bar{i})\right)\,\Pi\left((y,i),dz\times dl\right)\mu(dy\times di)\\
&=\int_X\int_0^{\infty}\lambda e^{-\lambda t}\int_{\Theta}p(S_i(t,y),\theta)\int_{\supp\,\nu^{\varepsilon}}
\Bigg(\sum_{j\in I}
\Big(\left\|w_{\theta}(S_i(t,y))+h-\bar{y}\right\|+cd(j,\bar{i})\Big)^{2+r}\,\\
&\quad\times\pi_{ij}\left(w_{\theta}(S_i(t,y))+h\right)\Bigg)
\nu^{\varepsilon}(dh)\,d\theta \,dt\,\mu(dy\times di).
\end{aligned}
\end{align}
Now, introduce $Z:=X\times[0,\infty)\times\Theta\times \supp \nu^{\varepsilon}\times I$ (where $X=Y\times I$), and define $\nu\in \mathcal{M}_1(Z)$ as follows:

\begin{align*}
\nu(A)
&:=\int_X\int_0^{\infty}\lambda e^{-\lambda t}\int_{\Theta}p(S_i(t,y),\theta)\int_{\supp\,\nu^{\varepsilon}}\left(\sum_{j\in I}\mathbbm{l}_A(y,i,t,\theta,h,j)\,\pi_{ij}\left(w_{\theta}(S_i(t,y))+h\right)\right)\\
&\quad\times\nu^{\varepsilon}(dh)\,  d\theta\,dt\,\mu(dy\times di)\qquad\mbox{for any}\qquad A\in\mathcal{B}_Z.
\end{align*}
Let us further consider $\varphi_0:Z\to\mathbb{R}$ given by
\begin{align*}
\varphi_0(y,i,t,\theta,h,j):=\left\|w_{\theta}(S_i(t,y))+h-\bar{y}\right\|+\tilde{c}d(j,\bar{i})
\end{align*}
for any $(y,i)\in X$, $t\in\mathbb{R}_+$, $\theta\in\Theta$, $h\in \supp\nu^{\varepsilon}$ and $j\in I$. Observe that $\varphi_0$ is a non-negative Borel measurable function, and that
\begin{align*}
\varphi_0(y,i,t,\theta,h,j)&\leq \left\|w_{\theta}(S_i(t,y))-w_{\theta}(S_i(t,\bar{y}))\right\|
+\left\|w_{\theta}(S_i(t,\bar{y}))-\bar{y}\right\|+\|h\|+\tilde{c}d(j,\bar{i}).
\end{align*}
Consequently, using the Minkowski inequality, we obtain
\begin{align}\label{eq:LIL_model}
\begin{aligned}
&\left\langle \varrho_c^{2+r}(\cdot,\bar{x}),P\mu \right\rangle ^{1/(2+r)}
=\left(\int_Z\varphi_0^{2+r}(y,i,t,\theta,h,j)\,\nu(dy\times di\times dt\times d\theta\times dh\times dj)\right)^{1/(2+r)}
\\
&\qquad\leq \left(\int_{Z}
\left\|w_{\theta}(S_i(t,y))-w_{\theta}(S_i(t,\bar{y}))\right\|^{2+r}\nu(dy\times di\times dt\times d\theta\times dh\times dj)\right)^{1/(2+r)}\\
&\qquad\quad+\left(\int_{Z}\left\|w_{\theta}(S_i(t,\bar{y}))-\bar{y}\right\|^{2+r}\nu(dy\times di\times dt\times d\theta\times dh\times dj)\right)^{1/(2+r)}+\varepsilon+\tilde{c},
\end{aligned}
\end{align}
where the second component on the right-hand side of the above inequality is finite due to \hyperref[cnd_strong:A1]{(A1$^*$)}. 
\hbox{According} to assumptions \hyperref[cnd_strong:A3]{(A3$^*$)} and \hyperref[cnd:a2]{(A2)}, we further have
\begin{align}
\label{eq:LIL_model_estim}
\begin{aligned}
&\int_{Z}
\left\|w_{\theta}(S_i(t,y))-w_{\theta}(S_i(t,\bar{y}))\right\|^{2+r}\nu(dy\times di\times dt\times d\theta\times dh\times dj)\\
&\qquad\leq
\int_X\int_0^{\infty}\lambda e^{-\lambda t} L_w^{*}\|S_i(t,y)-S_i(t,\bar{y})\|^{2+r}\,dt\,\mu(dy\times di)\\
&\qquad\leq 
\int_X\int_0^{\infty}\lambda e^{-\lambda t} L_w^{*}L^{2+r}e^{(2+r)\alpha t}\|y-\bar{y}\|^{2+r} \,dt\,\mu(dy\times di)\\
&\qquad\leq
\lambda L_w^{*}L^{2+r}\left(\int_0^{\infty}e^{-(\lambda-(2+r)\alpha)t}\,dt\right)\left(\int_X
\|y-\bar{y}\|^{2+r}
\,\mu(dy\times di)\right)\\
&\qquad\leq
\frac{\lambda L_w^{*}L^{2+r}}{\lambda-(2+r)\alpha}\left\langle V_{\tilde{c}}^{2+r},\mu\right\rangle,
\end{aligned}
\end{align}
where the last inequality follows from the fact that $(2+r)\alpha<\lambda$, which is provided by~(\ref{LIL_condition}). 
Hence, referring to (\ref{eq:LIL_model}) and (\ref{eq:LIL_model_estim}), we obtain condition \hyperref[cnd:B1p]{(B1$^{*}$)} with 
\begin{gather*}
a^{*}:=\frac{\lambda L_w^{*}L^{2+r}}{\lambda-(2+r)\alpha},\\
b^{*}:=\sup_{y\in Y}\left|\int_0^{\infty}e^{-\lambda t}\int_{\Theta}\|{ w_{\theta}(S_i(t,\bar{y}))-\bar{y}}\|^{2+r}p(S_i(t,y),\theta)\,d\theta\,dt\right|^{1/(2+r)}+\varepsilon_*+\tilde{c}.
\end{gather*}
Moreover, due to condition (\ref{LIL_condition}),  we see that $a^{*}\in(0,1)$, which completes the proof.
\end{proof}

An important special case of the above-discussed Markov chain, obtained by putting $I:=\{1\}$ and $S_1(t,y):=y$, is a~random iterated function system with an additive disturbance (see \cite{hhsw2}), which occurs e.g. in a stochastic model of single-gene autoregulation (described in \cite{hhs}). In this setting, $(Y_n,\xi_n)_{n\in\mathbb{N}_0}$, evolving on $X=Y\times \{1\}$, can be identified with the $Y$-valued chain $(Y_n)_{n\in\mathbb{N}_0}$, which takes the form:
\begin{equation*}
Y_{n+1}= w_{\theta_{n+1}}(Y_n)+H_{n+1}.
\end{equation*}
The one-step transition law is then given by
\begin{align}\label{def_Pi}
\Pi(y,A)=\int_{\Theta} p(y,\theta)\int_{\supp\,\nu^{\varepsilon}}\mathbbm{1}_A(w_{\theta}(y)+h)\,\nu^{\varepsilon}(dh)\,d\theta\;\;\;\text{for any}\;\;\; y\in Y,\,A\in \mathcal{B}_Y.
\end{align}

In the case where no disturbance occurs, i.e.  $H_n=0$ for all $n\in\mathbb{N}$, the system reduces to a standard random iterated function system, which can serve, for instance, as a model of cell cycle (see \cite{lm,hw}). A bit more general version of such a system is also investigated in \cite{ks}.

In this particular situation, conditions \eqref{eq:balance1} and \hyperref[cnd:a1]{(A1)}-\hyperref[cnd:a5]{(A5)}, guaranteeing the exponential ergodicity in~$d_{FM}$, can be simplified to the following requirements: there exist $\bar{y}\in Y$, $L_w\in (0,1)$, $L_p>0$ and $c_p>0$ such that, for any $y,y_1,y_2\in Y$, we have
\begin{gather}
\tag{A1'}\label{cnd:i}
\sup_{y\in Y}\int_{\Theta}\left\|w_{\theta}(\bar{y})-\bar{y}\right\|p(y,\theta)\,d\theta<\infty,
\\
\tag{A3'}\label{cnd:ii}
\int_{\Theta}p(y,\theta) \left\|w_{\theta}(y_1)-w_{\theta}(y_2)\right\|\,d\theta\leq L_w \|y_1-y_2\|,
\\
\tag{A4'}\label{cnd:iii}
\int_{\Theta}|p(y_1,\theta)-p(y_2,\theta)|\,d\theta\leq L_p\|y_1-y_2\|,
\\
\tag{A5'}\label{cnd:iv}
\int_{\Theta(y_1,y_2)}p(y_1,\theta)\wedge p(y_2,\theta)\,d\theta\geq c_p,
\end{gather}
where $\Theta(y_1,y_2)=\{\theta\in\Theta:\,\|w_{\theta}(y_1)-w_{\theta}(y_2)\|\leq L_w\|y_1-y_2\|\}$.
Note that \eqref{eq:balance1} is then trivially satisfied, since $L_w<1$, and we can take $\alpha=0$ and $L=1$.

Clearly, in order to apply Theorem \ref{thm_LIL}, i.e. our criterion on the functional LIL, we need to assume the appropriately simplified versions of \hyperref[cnd_strong:A1]{(A1$^*$)} and \hyperref[cnd_strong:A3]{(A3$^*$)}, rather that \hyperref[cnd:a1]{(A1)} and \hyperref[cnd:a3]{(A3)}, that is, the existence of $L_w^*\in(0,1)$ and $r\in(0,2)$ such that
\begin{gather}
\tag{A1$^*$'}\label{cnd_strong:i}
\sup_{y\in Y}\int_{\Theta}\left\|w_{\theta}(\bar{y})-\bar{y}\right\|^{2+r}p(y,\theta)\,d\theta<\infty,
\\
\tag{A3$^*$'}\label{cnd_strong:ii}
\int_{\Theta}p(y,\theta) \left\|w_{\theta}(y_1)-w_{\theta}(y_2)\right\|^{2+r}\,d\theta\leq L_w^* \|y_1-y_2\|,\;\;\;y_1,y_2\in Y.
\end{gather}
Obviously, inequality \eqref{LIL_condition} then also holds trivially.

While the proof of the earlier-mentioned \cite[Theorem 4.1]{dawid}, guaranteeing that conditions \hyperref[cnd:B0]{(B0)}-\hyperref[cnd:B4]{(B5)} hold for the general model, is rather long and technical, in this particular case, these conditions can be derived directly in a relatively simple way (cf. \cite[Proposition 3.1]{ks}). More specifically, from the continuity of $w_{\theta}$ and $p(\cdot,\theta)$ it follows that the Markov operator $P$ (corresponding to $\Pi$, given by \eqref{def:Pi_epsilon}) enjoys the Feller property, i.e. \hyperref[cnd:B0]{(B0)} holds. If we now consider the appropriately simplified form of the kernel $Q$, defined on $Y^2\times\mathcal{B}_{Y^2}$ by
\begin{align*}
Q(y_1,y_2,A):=\int_{\Theta}p(y_1,\theta)\wedge p(y_2,\theta)\int_{\supp\,\nu^{\varepsilon}}\mathbbm{1}_A(w_{\theta}(y_1)+h,w_{\theta}(y_2)+h)\,\nu^{\varepsilon}(dh)\,d\theta
\end{align*}
for any $(y_1,y_2)\in Y^2$, $A\in\mathcal{B}_{Y^2}$, 
and we take $F=Y^2$, then hypotheses \hyperref[cnd:B1]{(B1)}, \hyperref[cnd:B3]{(B3)}, \hyperref[cnd:B4]{(B4)} and \hyperref[cnd:B5]{(B5)} can be deduced almost immediately from \hyperref[cnd:i]{(A1')} \& \hyperref[cnd:ii]{(A3')},\, \hyperref[cnd:ii]{(A3')}, \hyperref[cnd:iv]{(A5')} and \hyperref[cnd:iii]{(A4')}, respectively, while \hyperref[cnd:B5]{(B2)} is trivially satisfied (as the domain of contractivity is $Y^2$). Finally, to prove the LIL, it suffices to note that \hyperref[cnd:B1p]{(B1$^*$)} follows from \hyperref[cnd_strong:i]{(A1$^*$')} and \hyperref[cnd_strong:ii]{(A3$^*$')}.

\section*{Appendix}\label{append}
Within the appendix, we present the proofs of lemmas from Section \ref{subsec}.

\begin{proof}[Proof of Lemma \ref{lem:constant}]
Fix an arbitrary $m\in\mathbb{N}\cup\{\infty\}$ and $c\geq 0$. We shall give the proof for $f_{m,c}^{\inf}$. 
The reasoning for $f_{m,c}^{\sup}$ is analogous. 
Note that, for every $n\in\mathbb{N}$ and every $k\in\mathbb{N}$, we have
\begin{align*}
\frac{1}{n}\sum_{l=k+1}^n\left(z_l^2\wedge m\right)=\frac{1}{n}\sum_{l=1}^n\left(z_l^2\wedge m\right)-\frac{1}{n}\sum_{l=1}^k\left(z_l^2\wedge m\right),
\end{align*}
and therefore
\begin{align}\label{eq:m_shift}
\liminf_{n\to\infty}\frac{1}{n}\sum_{l=k+1}^n\left(z_l^2\wedge m\right)
=\liminf_{n\to\infty}\frac{1}{n}\sum_{l=1}^n\left(z_l^2\wedge m\right).
\end{align}
Hence, for an arbitrarily fixed $l_0\in\mathbb{N}\backslash\{1\}$, we obtain
\begin{align*}
f_{m,c}^{\inf}(x)&=\mathbb{E}_x\left(\left|\liminf_{n\to\infty}\left(\frac{1}{n}\sum_{l=l_0}^n \left(z_l^2\wedge m\right) \right)-c\right|\wedge 1\right)\\
&=\lim_{n\to\infty}\mathbb{E}_x\left(\left|\inf_{k\geq n}\left(\frac{1}{k}\sum_{l=l_0}^k \left(z_l^2\wedge m\right) \right)-c\right|\wedge 1\right)\\
&=\lim_{n\to\infty}\lim_{N\to\infty}\mathbb{E}_x\left(\left|\min_{k\in\{n,n+1,\ldots,n+N\}}\left(\frac{1}{k}\sum_{l=l_0}^k \left(z_l^2\wedge m\right) \right)-c\right|\wedge 1\right).
\end{align*}
Consequently, defining $H_{n,N}:X\to\mathbb{R}$ for $n,N\in\mathbb{N}$ and $n\geq l_0$, by the formula
\begin{equation*}
H_{n,N}(x)=\mathbb{E}_x\left(\left|\min_{k\in\{n,n+1,\ldots,n+N\}}\left(\frac{1}{k}\sum_{l=l_0}^k \left(z_l^2\wedge m\right) \right)-c\right|\wedge 1\right),\end{equation*}
we get
\begin{equation}
\label{form_i}
f_{m,c}^{\inf}(x)=\lim_{n\to\infty}\lim_{N\to\infty} H_{n,N}(x)\;\;\;\mbox{for every}\;\;\;x\in X.
\end{equation}
Let us now observe that, for any $\alpha_i,\lambda\in\mathbb{R}$, where $i\in I$ and $I$ is a nonempty, finite set, we have
\begin{equation*}
|\min_{i\in I} \alpha_i-\lambda|\wedge 1 = |\min_{i\in I} (\alpha_i \wedge (1+\lambda))-\lambda|\wedge 1.\end{equation*} 
This in turn implies that, for $x\in X$,
\begin{align*}
H_{n,N}(x)&=\mathbb{E}_x\left(\left|\min_{k\in\{n,n+1,\ldots,n+N\}}\left(\left(\frac{1}{k}\sum_{l=l_0}^k \left(z_l^2\wedge m\right) \right)\wedge (1+c) \right)-c\right|\wedge 1\right)\\
&=\mathbb{E}_x\left(\left|\min_{k\in\{n,n+1,\ldots,n+N\}}\frac{1}{k}\left(\left(\sum_{l=l_0}^k \left(z_l^2\wedge m\right) \right)\wedge k(1+c) \right)-c\right|\wedge 1\right).
\end{align*}
For every pair $(n,N)$ such that $n,N\in \mathbb{N}$ and $n\geq l_0$, let us consider a~random variable $\Psi_{n,N}:\Omega\to X$ given by
\begin{equation*}\Psi_{n,N}=\min_{k\in\{n,n+1,\ldots,n+N\}}\frac{1}{k}\left(\left(\sum_{l=l_0}^k \left(z_l^2\wedge m\right) \right)\wedge k(1+c) \right)-c.\end{equation*}
Then $H_{n,N}(x)=\mathbb{E}_x(|\Psi_{n,N}|\wedge 1)$, $x\in X$, and hence
\begin{align}\label{eq:H-H}
\begin{aligned}
|H_{n,N}(x)-H_{n,N}(y)|
&\leq \mathbb{E}_{x,y}\left|(|\Psi_{n,N}^{(1)}|\wedge 1)-(|\Psi_{n,N}^{(2)}|\wedge 1)\right|\\
&\leq \mathbb{E}_{x,y}\left||\Psi_{n,N}^{(1)}|-|\Psi_{n,N}^{(2)}|\right|\leq \mathbb{E}_{x,y}\left|\Psi_{n,N}^{(1)}-\Psi_{n,N}^{(2)}\right|,
\end{aligned}
\end{align}
where the second inequality is implied by the following general property: 
\begin{align}\label{general_fact}
|\alpha\wedge c-\lambda\wedge c|\leq|\alpha-\lambda|\;\;\;\mbox{for any}\;\;\;\alpha,\lambda,c\in\mathbb{R}_+.
\end{align}
Now, since for any $c\in\mathbb{R}$ and all $\alpha_i,\lambda_i\in\mathbb{R}_+$, $i\in I$, where $I$ is a nonempty finite set,
\begin{align*}
|\min_{i\in I} \alpha_i - \min_{i\in I} \lambda_i|\leq \max_{i\in I} |\alpha_i - \lambda_i|
\end{align*}
and
\begin{align*}
\left|\left(\sum_{i\in I}\alpha_i\right)\wedge c-\left(\sum_{i\in I}\lambda_i\right)\wedge c\right|\leq\sum_{i\in I}|\alpha_i\wedge c-\lambda_i\wedge c|,
\end{align*}
we see that
\begin{align*}
&\left|\Psi_{n,N}^{(1)}-\Psi_{n,N}^{(2)}\right|\\
&\hspace{0.4cm}\leq \max_{k\in\{n,\ldots,n+N\}} \frac{1}{k}\Bigg|\left(\sum_{l=l_0}^k \left(z_l^{(1)}\right)^2 \wedge m \right)\wedge k(1+c) 
-\left(\sum_{l=l_0}^k \left(z_l^{(2)}\right)^2\wedge m \right)\wedge k(1+c)  \Bigg|\\
&\hspace{0.4cm}\leq \max_{k\in\{n,\ldots,n+N\}} \frac{1}{k}\sum_{l=l_0}^k\left|\left(z_l^{(1)}\right)^2\wedge m\wedge k(1+c)-\left(z_l^{(2)}\right)^2\wedge m\wedge k(1+c)\right|\\
&\hspace{0.4cm}\leq \max_{k\in\{n,\ldots,n+N\}} \frac{1}{k}\sum_{l=l_0}^k\left|\left(z_l^{(1)}\right)^2\wedge k(1+c)-\left(z_l^{(2)}\right)^2\wedge k(1+c)\right|,
\end{align*}
where the last inequality follows from \eqref{general_fact}.  
Further, applying the inequality 
\begin{equation*}\left|\alpha_1^2\wedge \lambda-\alpha_2^2\wedge \lambda\right|\leq 2\sqrt{\lambda}|\alpha_1-\alpha_2|,\;\;\;\mbox{where}\;\;\;\alpha_1,\alpha_2\in\mathbb{R}\;\;\;\mbox{and}\;\;\;\lambda\in\mathbb{R}_+,\end{equation*} 
we obtain 
\begin{align*}
\left|\Psi_{n,N}^{(1)}-\Psi_{n,N}^{(2)}\right|
&\leq \max_{k\in\{n,n+1,\ldots,n+N\}} \frac{1}{k}2k(1+c)\sum_{l=l_0}^k \left|z_l^{(1)}-z_l^{(2)} \right|
= 2(1+c)\sum_{l=l_0}^{n+N}\left|z_l^{(1)}-z_l^{(2)}\right|.
\end{align*}
From the above estimation and \eqref{eq:H-H} it follows that, for $x,y\in X$, $n,N\in\mathbb{N}$, $n\geq l_0$,
\begin{align}
\label{ggg}
|H_{n,N}(x)-H_{n,N}(y)|\leq 2(1+c)\sum_{l=l_0}^{n+N}\mathbb{E}_{x,y}\left|z_l^{(1)}-z_l^{(2)}\right|.
\end{align}
Hence, according to \eqref{form_i}, we have
\begin{equation*}|f_{m,c}^{\inf}(x)-f_{m,c}^{\inf}(y)|\leq 2(1+c)\sum_{l=l_0}^{\infty}\mathbb{E}_{x,y}\left|z_l^{(1)}-z_l^{(2)}\right|\;\;\;\mbox{for}\;\;\;x,y\in X.\end{equation*}
Finally, since $l_0\in\mathbb{N}$ was chosen arbitrarily, and, by assumption of this lemma, \linebreak\hbox{$\sum_{l=1}^{\infty}\mathbb{E}_{x,y}\left|z_l^{(1)}-z_l^{(2)}\right|<\infty$} for any $x,y\in X$, we can conclude that
\begin{equation*}\left|f_{m,c}^{\inf}(x)-f_{m,c}^{\inf}(y)\right|=0\;\;\;\mbox{for any}\;\;\;x,y\in X.\end{equation*} The proof is therefore completed.
\end{proof}

\begin{proof}[Proof of Lemma \ref{lem:continuous}]
Fix an arbitrary $m\in\mathbb{N}\cup\{\infty\}$ and let 
$c_m:=\mathbb{E}_{\mu_*}\left(z_1^2\wedge m\right)$.  
According to Lemma \ref{lem:birkhoff}, we know that 
\begin{align*}
\lim_{n\to\infty} \frac{1}{n}\sum_{l=1}^n \left(z_l^2\wedge m\right)=\lim_{n\to\infty} \frac{1}{n}\sum_{l=1}^n \left(z_1^2\wedge m\right)\circ T^{l-1}
=\mathbb{E}_{\mu_*}\left(z_1^2\wedge m\right)=c_m\;\;\;\mathbb{P}_{\mu_*}\mbox{-a.s.},
\end{align*}
which ensures
\begin{align}
\int_X f^{\inf}_{m,c_m}(x)\,\mu_*(dx)=\mathbb{E}_{\mu_*}\left(\left|\liminf_{n\to\infty}\left(\frac{1}{n}\sum_{l=1}^n \left(z_l^2\wedge m\right) \right)-c_m\right|\wedge 1\right)=0,\label{m_star1}
\\
\int_X f^{\sup}_{m,c_m}(x)\,\mu_*(dx)=\mathbb{E}_{\mu_*}\left(\left|\limsup_{n\to\infty}\left(\frac{1}{n}\sum_{l=1}^n \left(z_l^2\wedge m\right) \right)-c_m\right|\wedge 1\right)=0.\label{m_star2}
\end{align}
Referring to \eqref{eq:m_shift} and using the fact that $z_l^2 \circ T^N = z_{l+N}^2$, for any $N\in\mathbb{N}$, we have
\begin{align*}
&\mathbb{E}_{\mu}\left(\left|\liminf_{n\to\infty}\left(\frac{1}{n}\sum_{l=1}^n \left(z_l^2\wedge m\right) \right)-c_m\right|\wedge 1\right)\\
&\qquad=\mathbb{E}_{\mu}\left(\mathbb{E}_{\mu}\left(\left(\left|\liminf_{n\to\infty}\left(\frac{1}{n}\sum_{l=1}^n \left(z_l^2\wedge m\right) \right)-c_m\right|\wedge 1\right)\circ T^N\,\Big|\, \mathcal{F}_N\right)\right)\\
&\qquad=\mathbb{E}_{\mu}\left(\mathbb{E}_{\phi_N}\left(\left|\liminf_{n\to\infty}\left(\frac{1}{n}\sum_{l=1}^n \left(z_l^2\wedge m\right) \right)-c_m\right|\wedge 1\right)\right)
=\int_{X} f_{m,c_m}^{\inf}(x) \, P^N\mu(dx).
\end{align*}
Similar reasoning leads to
\begin{equation*}\mathbb{E}_{\mu}\left(\left|\limsup_{n\to\infty}\left(\frac{1}{n}\sum_{l=1}^n \left(z_l^2\wedge m\right) \right)-c_m\right|\wedge 1\right)=\int_{X} f_{m,c_m}^{\sup}(x)\,P^N\mu(dx)
\;\;\;\text{for any}\;\;\;N\in\mathbb{N}.
\end{equation*}

The functions $f_{m,c_m}^{\inf}$ and $f_{m,c_m}^{\sup}$ are obviously bounded, and, by the  assumption of the lemma, they are also continuous. Therefore, using the fact that $(P^n\mu)_{n\in\mathbb{N}}$ converges weakly to $\mu_*\in\mathcal{M}_1(X)$, and applying identities \eqref{m_star1}, \eqref{m_star2}, we can deduce that
\begin{align*}
\mathbb{E}_{\mu}\left(\left|\liminf_{n\to\infty}\left(\frac{1}{n}\sum_{l=1}^n \left(z_l^2\wedge m\right) \right)-c_m\right|\wedge 1\right)
&=\lim_{N\to\infty}\int_{X} f_{m,c_m}^{\inf}(x)\,P^N\mu(dx)\\
&=\int_{X} f_{m,c_m}^{\inf}(x)\,\mu_*(dx)=0,
\end{align*}
and analogously
\begin{align*}
\mathbb{E}_{\mu}\left(\left|\limsup_{n\to\infty}\left(\frac{1}{n}\sum_{l=1}^n \left(z_l^2\wedge m\right) \right)-c_m\right|\wedge 1\right)
&=\lim_{N\to\infty}\int_{X} f_{m,c_m}^{\sup}(x)\,P^N\mu(dx)\\
&=\int_{X} f_{m,c_m}^{\sup}(x)\,\mu_*(dx)=0.
\end{align*}
Finally, we obatin
\begin{align*}
\liminf_{n\to\infty} \frac{1}{n}\sum_{l=1}^n \left(z_l^2 \wedge m\right) 
= \limsup_{n\to\infty} \frac{1}{n}\sum_{l=1}^n \left(z_l^2 \wedge m\right)=c_m   \;\;\;\mathbb{P}_{\mu}\mbox{-a.s},
\end{align*}
which ends the proof.
\end{proof}

\begin{proof}[Proof of Lemma \ref{lem:dawida}]
According to Lemmas \ref{lem:constant} and \ref{lem:continuous} we know that, for any $m\in\mathbb{N}$,
\begin{align*}
\lim_{n\to\infty}\frac{1}{n}\sum_{l=1}^n\left(z_l^2\wedge m\right)=\mathbb{E}_{\mu_*}\left(z_1^2\wedge m\right)\;\;\;\mathbb{P}_{\mu}\mbox{-a.s.}
\end{align*}
Moreover, we see that $\left(n^{-1}\sum_{l=1}^n(z_l^2\wedge m)\right)_{n\in\mathbb{N}}$ is bounded by $m$, and therefore, due to the Dominated Convergence Theorem, we can conclude that,  for $m\in\mathbb{N}$, 
\begin{align}\label{eq:wedge_m}
\lim_{n\to\infty}\frac{1}{n}\sum_{l=1}^n\mathbb{E}_{\mu}\left(z_l^2\wedge m\right)=\mathbb{E}_{\mu}\left(\lim_{n\to\infty}\frac{1}{n}\sum_{l=1}^n\left(z_l^2\wedge m\right)\right)=\mathbb{E}_{\mu_*}\left(z_1^2\wedge m\right).
\end{align}
Further, note that, for $m\in\mathbb{N}$, 
\begin{align*}
\sup_{l\in\mathbb{N}}\mathbb{E}_{\mu}\left(z_l^2\mathbbm{1}_{\{z_l^2\geq m\}}\right)
= \sup_{l\in\mathbb{N}}\mathbb{E}_{\mu}\left(|z_l|^{2+r}\left(z_l^2\right)^{-r/2}\mathbbm{1}_{\{z_l^2\geq m\}}\right)
\leq m^{-r/2}\sup_{l\in\mathbb{N}}\mathbb{E}_{\mu}\left|z_l\right|^{2+r}.
\end{align*}
Hence, according to assumption \eqref{cond:b}, we infer that
\begin{align}\label{eq:to0}
\sup_{l\in\mathbb{N}}\mathbb{E}_{\mu}\left(z_l^2\mathbbm{1}_{\{z_l^2\geq m\}}\right)\to 0,\;\;\;\mbox{as}\;\;\;m\to\infty.
\end{align}
Now, observe that, for every $l\in\mathbb{N}$ and any $m\in\mathbb{N}$, 
\begin{align*}
&\big|\mathbb{E}_{\mu}\left(z_l^2\wedge m\right)-\mathbb{E}_{\mu}\left(z_l^2\right)\big|\\
&\qquad
\leq\left|\mathbb{E}_{\mu}\left(z_l^2\wedge m\right)-\mathbb{E}_{\mu}\left(z_l^2\mathbbm{1}_{\{z_l^2< m\}}\right)\right|+\mathbb{E}_{\mu}\left(z_l^2\mathbbm{1}_{\{z_l^2\geq m\}}\right)\\
&\qquad =\left|\mathbb{E}_{\mu}\left(m\mathbbm{1}_{\{z_l^2\geq m\}}\right)+\mathbb{E}_{\mu}\left(z_l^2\mathbbm{1}_{\{z_l^2< m\}}\right)-\mathbb{E}_{\mu}\left(z_l^2\mathbbm{1}_{\{z_l^2< m\}}\right)\right|
+\mathbb{E}_{\mu}\left(z_l^2\mathbbm{1}_{\{z_l^2\geq m\}}\right)\\
&\qquad=\mathbb{E}_{\mu}\left(m\mathbbm{1}_{\{z_l^2\geq m\}}\right)+\mathbb{E}_{\mu}\left(z_l^2\mathbbm{1}_{\{z_l^2\geq m\}}\right)
\leq 2\mathbb{E}_{\mu}\left(z_l^2\mathbbm{1}_{\{z_l^2\geq m\}}\right)
\leq 2\sup_{l\in\mathbb{N}}\mathbb{E}_{\mu}\left(z_l^2\mathbbm{1}_{\{z_l^2\geq m\}}\right),
\end{align*}
which gives, for any $n \in \mathbb{N}$,
\begin{align*}
\left|\frac{1}{n}\sum_{l=1}^n\mathbb{E}_{\mu}\left(z_l^2\wedge m\right)-\frac{1}{n}\sum_{l=1}^n\mathbb{E}_{\mu}\left(z_l^2\right)\right|
&\leq \frac{1}{n}\sum_{l=1}^n\left|\mathbb{E}_{\mu}\left(z_l^2\wedge m\right)-\mathbb{E}_{\mu}\left(z_l^2\right)\right|
\leq 2\sup_{l\in\mathbb{N}}\mathbb{E}_{\mu}\left(z_l^2\mathbbm{1}_{\{z_l^2\geq m\}}\right).
\end{align*}
Consequently, for any $m\in\mathbb{N}$, we can write
\begin{align*}
\left|\mathbb{E}_{\mu_*}\left(z_1^2\right)-\frac{1}{n}\sum_{l=1}^n\mathbb{E}_{\mu}\left(z_l^2\right)\right|
&\leq \left|\mathbb{E}_{\mu_*}\left(z_1^2\right)-\mathbb{E}_{\mu_*}\left(z_1^2\wedge m\right)\right|
+\left|\mathbb{E}_{\mu_*}\left(z_1^2\wedge m\right)-\frac{1}{n}\sum_{l=1}^n\mathbb{E}_{\mu}\left(z_l^2\wedge m\right)\right|\\
&\quad+2\sup_{l\in\mathbb{N}}\mathbb{E}_{\mu}\left(z_l^2\mathbbm{1}_{\{z_l^2\geq m\}}\right),
\end{align*}
and hence, due to \eqref{eq:wedge_m}, we have
\begin{align*}
\limsup_{n\to\infty}\left|\frac{1}{n}\sum_{l=1}^n\mathbb{E}_{\mu}\left(z_l^2\right)-\mathbb{E}_{\mu_*}\left(z_1^2\right)\right|
&\leq 2\sup_{l\in\mathbb{N}}\mathbb{E}_{\mu}\left(z_l^2\mathbbm{1}_{\{z_l^2\geq m\}}\right)
+\left|\mathbb{E}_{\mu_*}\left(z_1^2\wedge m\right)-\mathbb{E}_{\mu_*}\left(z_1^2\right)\right|
\end{align*}
for all $m\in\mathbb{N}$. 
Now, we see that the right-hand side of the above inequality tends to zero, as $m\to\infty$, which follows from \eqref{eq:to0}. 
Finally, by the orthogonality of martingale differences, we get
\begin{align*}
\lim_{n\to\infty}\frac{h_n^2(\mu)}{n}=\lim_{n\to\infty}\frac{1}{n}\mathbb{E}_{\mu}\left(m_n^2\right)=\lim_{n\to\infty}\frac{1}{n}\sum_{l=1}^n\mathbb{E}_{\mu}\left(z_l^2\right)=\mathbb{E}_{\mu_*}\left(z_1^2\right)=\sigma^2,
\end{align*}
which completes the proof of \eqref{eq:h_n_sigma}. 

Now, in order to establish \eqref{eq:h_n_1}, it is enough to 
observe that \eqref{eq:h_n_sigma} and Lemma \ref{lem:continuous} imply 
\begin{align*}
\lim_{n\to\infty}\frac{1}{h_n^2(\mu)}\sum_{l=1}^nz_l^2
=\frac{1}{\sigma^2}\lim_{n\to\infty}\frac{1}{n}\sum_{l=1}^nz_l^2=1.
\end{align*}
\end{proof}

\begin{proof}[Proof of Lemma \ref{lem:ogolny}]
Let $\upsilon, \vartheta>0$. From Lemma \ref{lem:dawida}, we can deduce that $h_n(\mu)>0$ for $n\geq N$, where $N$ is some sufficiently large constant. Now, since $r\in(0,2)$, we obtain, for $n\geq N$,  
\begin{align*}
h_n^{-4}(\mu)\mathbb{E}_{\mu}\left(z_n^4\mathbbm{1}_{\{|z_n|<\,\upsilon\, h_n(\mu)\}}\right)
&\leq h_n^{-4}(\mu)\mathbb{E}_{\mu}\left(|z_n|^{2+r}\upsilon^{2-r}h_n^{2-r}(\mu)\right)
\leq \upsilon^{2-r}\left(\sup_{n\in\mathbb{N}}\mathbb{E}_{\mu}|z_n|^{2+r}\right)h_n^{-2-r}(\mu),
\end{align*}
and similarly
\begin{align*}
h_n^{-1}(\mu)\mathbb{E}_{\mu}
\left(|z_n|\mathbbm{1}_{\{|z_n|\geq \,\vartheta \,h_n(\mu)\}}\right)
&\leq h_n^{-1}(\mu)\mathbb{E}_{\mu}\left(\frac{|z_n|^{2+r}}{\left(\vartheta\, h_n(\mu)\right)^{1+r}}\right)
\leq\vartheta^{-1-r}\left(\sup_{n\in\mathbb{N}}\mathbb{E}_{\mu}|z_n|^{2+r}\right)h_n^{-2-r}(\mu).
\end{align*}
Since $\sup_{n\in\mathbb{N}}\mathbb{E}_{\mu}|z_n|^{2+r}<\infty$, and, due to Lemma \ref{lem:dawida}, $\sum_{n=N}^{\infty}h_n^{-2-r}(\mu)<\infty$, the proof is completed.
\end{proof}

\section*{Acknowledgements}
Hanna Wojew\'odka-\'Sci\k{a}\.zko was supported by the Foundation for Polish Science (FNP). Part of this work was done when Hanna Wojew\'odka-\'Sci\k{a}\.zko attended a four-week study trip to the Mathematical Institute at Leiden University, which was also supported by the FNP (the so-called `Outgoing Stipend' in the START programme).


\bibliographystyle{plain}
\bibliography{references}        

\begin{thebibliography}{10}

\bibitem{bms}
W.~Bo{\l}t, A.A. Majewski, and T.~Szarek.
\newblock An invariance principle for the law of the iterated logarithm for
  some markov chains.
\newblock {\em Studia Mathematica}, 212(1):41--53, 2012.

\bibitem{dawid2}
D.~Czapla.
\newblock A criterion on asymptotic stability for partially equicontinuous
  markov operators.
\newblock {\em Stochastic Processes and their Applications},
  128(11):3656--3678, 2018.

\bibitem{dawid}
D.~Czapla, K.~Horbacz, and H.~Wojew{\'{o}}dka-{\'{S}}ci{\k{a}}{\.{z}}ko.
\newblock Ergodic properties of some piecewise-deterministic markov process
  with application to gene expression modelling.
\newblock {\em Stochastic Processes and their Applications}, 2019.

\bibitem{clt_chw}
D.~Czapla, K.~Horbacz, and H.~Wojew{\'{o}}dka-{\'{S}}ci{\k{a}}{\.{z}}ko.
\newblock A useful version of the central limit theorem for a general class of
  markov chains.
\newblock {\em Journal of Mathematical Analysis and Applications}, 484(1),
  2020.

\bibitem{asia}
D.~Czapla and J.~Kubieniec.
\newblock Exponential ergodicity of some markov dynamical systems with
  application to a poisson-driven stochastic differential equation.
\newblock {\em Dynamical Systems}, 34(1):130--156, 2018.

\bibitem{nonlinear}
D.~Douc, E.~Moulines, and D.~Stoffer.
\newblock {\em Nonlinear Time Series: Theory, Methods and Applications with R
  Examples}.
\newblock Chapman and Hall/CRC, New York, 2014.

\bibitem{dudley}
R.M. Dudley.
\newblock {\em Probabilities and metrics. Convergence of laws on metric spaces,
  with a~view to statistical testing}.
\newblock Matematisk Institut, Aarhus Universitet, Aarhus, 1976.

\bibitem{hairer}
M.~Hairer.
\newblock Exponential mixing for a stochastic partial differential equation
  driven by degenerate noise.
\newblock {\em Nonlinearity}, 15(2):271--279, 2002.

\bibitem{hh}
P.~Hall and C.~C. Heyde.
\newblock {\em Martingale Limit Theory and its Application}.
\newblock Elsevier, 1980.

\bibitem{hs}
C.~C. Heyde and D.J. Scott.
\newblock Invariance principles for the law of the iterated logarithm for
  martingales and processes with stationary increments.
\newblock {\em The Annals of Probability}, 1(3):428--436, 1973.

\bibitem{hhs}
S.~Hille, K.~Horbacz, and T.~Szarek.
\newblock Existence of a unique invariant measure for a class of equicontinuous
  markov operators with application to a stochastic model for an autoregulated
  gene.
\newblock {\em Annales math{\'{e}}matiques Blaise Pascal}, 23(2):171--217,
  2016.

\bibitem{hhsw2}
S.~Hille, K.~Horbacz, T.~Szarek, and H.~Wojew{\'{o}}dka.
\newblock Law of the iterated logarithm for some markov operators.
\newblock {\em Asymptotic Analysis}, 97(1-2):91--112, 2016.

\bibitem{horbacz_poiss}
K.~Horbacz.
\newblock Invariant measures related with randomly connected poisson driven
  differential equations.
\newblock {\em Annales Polonici Mathematici}, 79(1):31--44, 2002.

\bibitem{horbacz_szarek}
K.~Horbacz and T.~Szarek.
\newblock Irreducible markov systems on polish spaces.
\newblock {\em Studia Mathematica}, 177(3):285--295, 2006.

\bibitem{ks}
R.~Kapica and M.~\'Sl\k{e}czka.
\newblock Random iterations with place dependent probabilities.
\newblock {\em arXiv:1107.0707 (2012)}, 2019.

\bibitem{kazak}
J.~Kazak.
\newblock Piecewise-deterministic markov processes.
\newblock {\em Annales Polonici Mathematici}, 109(3):279--296, 2013.

\bibitem{wu}
Wu~L.
\newblock Functional law of iterated logarithm for additive functionals of
  reversible markov processes.
\newblock {\em Acta Mathematicae Applicatae Sinica}, 16(2):149--161, 2000.

\bibitem{l_frac}
A.~Lasota.
\newblock From fractals to stochastic differential equations, in: Chaos-the
  interplay between stochastic and deterministic behaviour.
\newblock {\em Lecture Notes in Phys. (Springer Verlag)}, 457:235--255, 1995.

\bibitem{lm}
A.~Lasota and M.C. Mackey.
\newblock Cell division and the stability of cellular populations.
\newblock {\em Journal of Mathematical Biology}, 38(3):241--261, 1999.

\bibitem{las_traple}
A.~Lasota and J.~Traple.
\newblock Invariant measures related with poisson driven stochastic
  differential equation.
\newblock {\em Stochastic Processes and their Applications}, 106(1):81--93,
  2003.

\bibitem{mtky}
M.C. Mackey, M.~Tyran-Kami{\'{n}}ska, and R.~Yvinec.
\newblock Dynamic behavior of stochastic gene expression models in the presence
  of bursting.
\newblock {\em {SIAM} Journal on Applied Mathematics}, 73(5):1830--1852, 2013.

\bibitem{mt}
Sean~P. Meyn and Richard~L. Tweedie.
\newblock {\em Markov Chains and Stochastic Stability}.
\newblock Springer London, 1993.

\bibitem{revuz}
D.~Revuz.
\newblock {\em Markov chains}.
\newblock North-Holland Elsevier, Amsterdam, 1975.

\bibitem{sleczka}
M.~{\'{S}}l{\k{e}}czka.
\newblock Exponential convergence for markov systems.
\newblock {\em Annales Mathematicae Silesianae}, 29(1):139--149, 2015.

\bibitem{strassen}
V.~Strassen.
\newblock An invariance principle for the law of the iterated logarithm.
\newblock {\em Zeitschrift for Wahrscheinlichkeitstheorie und Verwandte
  Gebiete}, 3(3):211--226, 1964.

\bibitem{werner}
I.~Werner.
\newblock Contractive markov systems.
\newblock {\em Journal of the London Mathematical Society}, 71(01):236--258,
  2005.

\bibitem{hw}
H.~Wojew{\'{o}}dka.
\newblock Exponential rate of convergence for some markov operators.
\newblock {\em Statistics {\&} Probability Letters}, 83(10):2337--2347, 2013.

\bibitem{zw}
O.~Zhao and M.~Woodroofe.
\newblock Law of the iterated logarithm for stationary processes.
\newblock {\em The Annals of Probability}, 36(1):127--142, 2008.

\end{thebibliography}

%

\end{document}